\font\maljapanese=dmjhira at 2ex % you can change this "2ex" value
\def\yo{\textrm{\maljapanese\char"48}} 
\newcommand{\support}[1]{The author was supported by {#1}.}
\newcommand{\NSFFour}{NSF Grant DMS--1811189}
\newcommand{\MAHAddress}{University of California Los Angeles, Los Angeles, CA 90095}
\newcommand{\MAHemail}{\tt{mikehill@math.ucla.edu}}
\newcommand{\Z}{{\mathbb  Z}}
\newcommand{\R}{{\mathbb R}}
\newcommand{\F}{{\mathbb F}}
\DeclareMathOperator{\MU}{MU}
\newcommand{\MUR}{\MU_{\R}}
\newcommand{\BUR}{BU_{\R}}
\newcommand{\smashover}[1]{\underset{#1}{\wedge}}
\newcommand{\timesover}[1]{\underset{#1}{\times}}
\newcommand{\boxover}[1]{\underset{#1}{\Box}}
\newcommand{\tensorover}[1]{\underset{#1}{\otimes}}
\newcommand{\otimesover}[1]{\tensorover{#1}}
\DeclareMathOperator{\RO}{RO}
\newcommand{\fpr}{{\_}r}
\newcommand{\coinduction}{coinduction}
\newcommand{\Ind}{\big\uparrow} %Matching Mackey notation
\newcommand{\ind}{\uparrow}
\DeclareMathOperator{\Hom}{Hom}
\DeclareMathOperator{\Ext}{Ext}
\DeclareMathOperator{\Tor}{Tor}
\DeclareMathOperator{\Map}{Map}
\DeclareMathOperator{\Stab}{Stab}
\newcommand{\res}{res}
\newcommand{\tr}{tr}
\newcommand{\m}[1]{{\protect\underline{#1}}}
\newcommand{\mZ}{\m{\Z}}
\newcommand{\mM}{\m{M}}
\newcommand{\mR}{\m{R}}
\newcommand{\mA}{\m{A}}
\newcommand{\mstar}{\m{\star}}
\newcommand{\cc}[1]{\mathcal #1}
\newcommand{\cA}{\cc{A}}
\newcommand{\Sp}{\mathcal Sp}
\newcommand{\Set}{\mathcal Set}
\newcommand{\Top}{\mathcal Top}
\newcommand{\Mod}{\mathcal Mod}
\newcommand{\RMod}{R\mhyphen\Mod}
\newcommand{\Fin}{\mathcal Fin}
\mathchardef\mhyphen=45
\newcommand{\EM}{Eilenberg--Mac~Lane}
\newcommand{\DL}{Dyer--Lashof}
\newcommand{\Boxover}[1]{\underset{#1}{\Box}}
\numberwithin{equation}{section}
\newtheorem{theorem}{Theorem}[section]
\newtheorem{corollary}[theorem]{Corollary}
\newtheorem{proposition}[theorem]{Proposition}
\newtheorem*{theorem*}{Theorem}
\newtheorem*{proposition*}{Proposition}
\newtheorem{example}[theorem]{Example}
\theoremstyle{remark}
\newtheorem{remark}[theorem]{Remark}
\newtheorem{notation}[theorem]{Notation}
\theoremstyle{definition}
\newtheorem{definition}[theorem]{Definition}
\newtheorem*{definition*}{Definition}
\newcommand{\defemph}[1]{\textbf{#1}}
\title{Freeness and equivariant stable homotopy}
\author{Michael A.~Hill}
\thanks{\support{\NSFFour}}
\address{\MAHAddress}
\email{\MAHemail}
\begin{document}

\begin{abstract}
We introduce a notion of freeness for \(RO\)-graded equivariant generalized homology theories, considering spaces or spectra \(E\) such that the \(R\)-homology of \(E\) splits as a wedge of the \(R\)-homology of induced virtual representation spheres. The full subcategory of these spectra is closed under all of the basic equivariant operations, and this greatly simplifies computation. Many examples of spectra and homology theories are included along the way.

We refine this to a collection of spectra analogous to the pure and isotropic spectra considered by Hill--Hopkins--Ravenel. For these spectra, the \(RO\)-graded Bredon homology is extremely easy to compute, and if these spaces have additional structure, then this can also be easily determined. In particular, the homology of a space with this property naturally has the structure of a co-Tambara functor (and compatibly with any additional product structure). We work this out in the example of \(BU_{\mathbb R}\) and coinduced versions of this.

We finish by describing a readily computable bar and twisted bar spectra sequence, giving Bredon homology for various \(E_{\infty}\) pushouts, and we apply this to describe the homology of \(BBU_{\mathbb R}\).
\end{abstract}

\keywords{equivariant homotopy, equivariant homology, Dyer--Lashof}
\maketitle

\section{Introduction}
Equivariant cohomology is often viewed as very difficult to compute. In full generality, this is often true, as many computations which non-equivariantly were completed in the 1950s and 1960s are still out of reach. Addtionally, the kinds of cellular decompositions which geometrically arise are often not adapted to easy computation, further compounding the problem. Many computations in the literature require significant amounts of hard work, even for ordinary (Bredon) homology (see, for example, the recent papers of Dugger on equivariant Grassmanians \cite{DuggerGr} and Hazel on \(C_{2}\)-surfaces \cite{HazelSurfaces}). 

In this paper, we build on a class of spectra introduced by Ferland--Lewis \cite{FerLew}, focusing on a certain subcategory of spaces and spectra for which essentially all of these problems go away. The basic definition is motivated by algebra.
\begin{definition*}
Let \(R\) be an \(E_\infty\)-monoid in genuine \(G\)-spectra. A \(G\)-spectrum \(E\) has {\defemph{\(R\)-free homology}} if \(R\wedge E\) splits as a wedge of \(R\)-modules of the form
\[
R\wedge (G_+\smashover{H}S^V),
\]
where \(V\) is a virtual representation of \(H\).
\end{definition*}
These classes of spectra contain many geometrically meaningful spaces and spectra. Delightfully, these \(R\)-free spectra are closed under most of the usual operations in equivariant homotopy.
\begin{theorem*}
If \(R\) is an \(E_\infty\)-monoid in genuine \(G\)-spectra, then the category of \(R\)-free spectra is closed under 
\begin{enumerate}
\item coproducts, 
\item restriction along arbitrary homomorphisms, 
\item induction from a subgroup,
\item the smash product, and 
\item norm maps, if \(R\) is actually a \(G\)-\(E_\infty\) ring spectrum.
\end{enumerate}
\end{theorem*}

For Bredon homology, this gives a large class of spaces and spectra for which the cohomology is easy to describe with almost arbitrary coefficients. Most excitingly, it means we can describe the full coalgebra (in fact, co-Tambara functor) structure on the homology of these spaces and on the cohomology of equivariant commutative monoid objects. Closure under norms here gives a formula for the Bredon homology of coinduced spaces with various coefficients, which in turn gives ways to understand Bredon homology and cohomology of certain {\EM} spaces.

After describing a host of examples, we restrict focus to a class of spectra for which everything is described by the underlying homology. The slice filtration of \cite{HHR} gives a version of the Postnikov tower where we use various representation spheres instead of ordinary spheres. In the nicest cases, such as those built out of the norms of the Fujii--Landweber spectrum of Real bordism \(\MUR\), the slice associated graded is a wedge of regular representation spheres smashed with computationally tractible Eilenberg--Mac Lane spectra \cite{Fujii, LandweberMUR} (see also \cite{HuKriz}). 

We consider \(H\mZ\)-free spectra where the [induced] virtual representation spheres are only in regular representation dimensions. These assumptions allow us to reduce almost any computational question to a question about the non-equivariant homotopy, tying things to classically studied objects. We demonstrate the efficiency of this by giving the full Tambara and co-Tambara functor structures on the homology of \(\BUR\) and of \(\Map^{C_{2}}(G,\BUR)\). We also describe the action of the \(C_{2}\)-{\DL} algebra on the mod \(2\)-homology of \(\BUR\).

We close with applications to the bar/Rothenberg--Steenrod and Eilenberg--Moore spectral sequences. When the spaces in question are \(R\)-free, the \(E_{2}\)-terms of the usual spectral sequences have the expected form, and we use this to compute the homology of \(B\BUR\) and of the coinduced space \(\Map^{C_{2}}(G,B\BUR)\) for all finite \(G\). As an aside, we also mention the sign-twisted analogues of these classical spectral sequences when \(G=C_{2}\), giving ways to compute the homology of the signed bar construction or the cohomology of the twisted homotopy pullback and signed loop spaces.

Throughout the paper, our emphasis is on the conceptual understanding of the objects and on explicit examples. We include many examples of spaces and spectra of interest, showing how they fit into this framework, working to demystify equivariant computations.

\subsection*{Conventions and notation}
In all that follows, we work in ``genuine'' \(G\)-spectra for a finite group \(G\). Much of what we say will actually be model agnostic; we will largely talk about results in the homotopy category. When discussing the difference between \(E_\infty\) and \(G\)-\(E_\infty\) monoids, however, we will implicitly be working in equivariant orthogonal or symmetric spectra, since both have well-developed notions of the norm \cite{HHR}, \cite{Hausmann}.

\section{\texorpdfstring{\(\RO\)}{RO}-graded homology}
%\subsection{Gradings and ring structures}
Many of the spaces which arise geometrically can be built not out of cells of the form ``disk in a [virtual] representation \(V\)'' but rather out of more general cells of the form
\[
G_+\smashover{H} D(V),
\]
where \(V\) is a [virtual] \(H\)-representation. Algebraic constructions like the norm automatically build in this more general kind of \(\RO\)-grading, considering instead objects graded on pairs consisting of a subgroup \(H\) and a virtual representation of \(H\). A more coordinate free version is given by considering Thom spectra of virtual bundles over finite \(G\)-sets; a particular model of this is the work of Angeltveit--Bohmann \cite{AngeltveitBohmann}.

\begin{definition}[{\cite[Definition 2.7]{HHRkH}}]
If \(T\) is a finite \(G\)-set and \(V\) is an equivariant virtual  bundle over \(T\), then let \(M(V)\) be the Thom spectrum of \(V\) and 
\[
\pi_{\mstar}(E)\big(T,V\big)=\big[M(V),E\big]^{G}.
\]
\end{definition}

\begin{remark}
If \(T\) is a transitive \(G\)-set, then a choice of point \(t\in T\) gives an equivariant equivalence
\[
T\cong G/\Stab(t),
\]
and restriction to \(t\) gives an equivalence of categories between \(\Stab(t)\)-equivariant virtual representations and virtual equivariant vector bundles over \(T\). 
\end{remark}

\begin{notation}
In the case \(T=G/H\), so \(V\) gives a virtual \(H\)-representation \(V_{H}\), let
\[
E^{H}_{V_{H}}(S^{0})=\pi_{\mstar}(E)(T,V).
\]
\end{notation}

These abelian groups assemble into a kind of Mackey functor, twisted by these bundles. This generalizes the earlier work of Ferland--Lewis \cite{FerLew}.

\begin{proposition}
If \(f\colon S\to T\) is a map of finite \(G\)-sets and if \(V\to T\) is a virtual equivariant bundle, then \(f\) induces a transfer map
\[
\pi_{\mstar}(E)\big(S,f^{\ast}V\big)\xrightarrow{T_{f}} \pi_{\mstar}(E)\big(T,V\big)
\]
and a restriction map
\[
\pi_{\mstar}(E)\big(T,V\big)\xrightarrow{R_{f}} \pi_{\mstar}(E)\big(S,f^{\ast}V\big).
\]
\end{proposition}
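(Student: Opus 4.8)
The plan is to construct, for each $f\colon S\to T$ and each virtual equivariant bundle $V\to T$, an honest map of $G$-spectra between $M(f^{\ast}V)$ and $M(V)$ in each direction, and then to define $R_f$ and $T_f$ as the induced maps on $[-,E]^{G}=\pi_{\mstar}(E)(-,-)$. The restriction direction is formal: the cartesian square defining $f^{\ast}V$ carries a tautological bundle map $\tilde f\colon f^{\ast}V\to V$ covering $f$, and since $M(-)$ is covariant for bundle maps this gives $M(\tilde f)\colon M(f^{\ast}V)\to M(V)$. I would set $R_f:=M(\tilde f)^{\ast}$; functoriality of $M(-)$ then immediately yields $R_{\id}=\id$ and $R_{g\circ f}=R_f\circ R_g$.

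The transfer requires a wrong-way map $\tau_f\colon M(V)\to M(f^{\ast}V)$, which I would build by Pontryagin--Thom collapse. As $S$ is a finite $G$-set it embeds equivariantly in some finite-dimensional $G$-representation $W$ (e.g.\ the permutation representation on $S$), so $(f,\iota)\colon S\hookrightarrow T\times W$ is an equivariant embedding over $T$. Since $S$ is $0$-dimensional, the normal bundle of this embedding is canonically the restriction $S\times W=f^{\ast}(T\times W)$, and a choice of equivariant tubular neighborhood produces a collapse
\[
T_{+}\wedge S^{W}\longrightarrow \mathrm{Th}(S\times W)=S_{+}\wedge S^{W}.
\]
Running the same collapse after pulling $V$ back over $T\times W$ and $f^{\ast}V$ over $S\times W$ gives a map of Thom spectra $M(V)\wedge S^{W}\to M(f^{\ast}V)\wedge S^{W}$, and desuspending stably by the invertible $G$-sphere $S^{W}$ yields $\tau_f$; then $T_f:=\tau_f^{\ast}$. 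To keep this construction honest, and to identify it with familiar transfers, I would also do it orbit-by-orbit: writing $T=\coprod_i G/H_i$ and the part of $S$ over $G/H_i$ as $G\times_{H_i}S_{0,i}$ for a finite $H_i$-set $S_{0,i}$, the map $M(V)\to M(f^{\ast}V)$ is induced up from the $H_i$-equivariant map for $S_{0,i}\to\ast$, which in turn reduces (over each orbit of $S_{0,i}$) to the case $H/K\to\ast$, where the collapse above is precisely the classical dimension-shifting transfer. This shows that when $V$ is pulled back from a point, $R_f$ and $T_f$ recover the restriction and transfer of the usual Mackey functor, justifying the twisted-Mackey-functor language.

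The main obstacle is the coherence of the Pontryagin--Thom step: one must check that $\tau_f$ is independent of the auxiliary representation $W$ and of the tubular neighborhood (the standard argument, comparing two embeddings after stabilizing $W$, adapted equivariantly), and one must handle the \emph{virtual} part of $V$ by writing $V=V_0\ominus W'$ and verifying that the collapse is compatible with twisting by $W'$ so that $\tau_f$ descends to Thom spectra of virtual bundles. Once these points are in place, the remaining compatibilities — naturality of $R_f,T_f$ in $E$, the relation $T_{g\circ f}=T_g\circ T_f$, and the push--pull/double-coset identities relating $R_f$ and $T_f$ across pullback squares of finite $G$-sets — all follow from the corresponding classical facts about the equivariant transfer, applied fiberwise over the orbit decomposition of $T$.
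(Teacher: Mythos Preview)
The paper does not supply a proof of this proposition; it is stated as a standard fact, with the surrounding discussion pointing to \cite{AngeltveitBohmann} and \cite{FerLew} for the general twisted-Mackey framework and the subsequent remark pointing to \cite[Lemma~7.20]{HHR} for the sign subtlety in the double-coset formula. Your construction is correct and is exactly the standard one: restriction via functoriality of the Thom spectrum in bundle maps covering $f$, and transfer via equivariant Pontryagin--Thom collapse (equivalently, the self-duality of $S_{+}$ for finite $G$-sets $S$), which on orbits reduces to the usual Wirthm\"uller transfer. There is nothing to compare beyond noting that your treatment is considerably more detailed than the paper's, which simply asserts the result.
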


The Weyl action here can be somewhat subtle. If \(V\) is a representation of \(G\), then we have a Weyl action on 
\[
\pi_{\mstar}(E)(G/H, G/H\times V).
\]
The standard isomorphism of \(G\)-spaces over \(G/H\)
\[
G/H\times V\cong G\timesover{H}i_{H}^{\ast}V
\]
give isomorphisms of vector bundles, and hence this group depends only on the \(H\)-representation \(i_{H}^{\ast}V\). The Weyl action, however, depends on \(V\) itself as a \(G\)-representation. Put another way, the standard isomorphism given above is not Weyl-equivariant.

\begin{example}
If \(G=C_{2}\) and \(V=(\sigma-\R)\), the virtual dimension zero shift of the sign representation, then the groups
\[
\pi_{\mstar}(E)\big(C_{2},C_{2}\times (V\oplus\mathbb R^{n})\big)
\]
are just the ordinary homotopy groups
\[
\pi_{n}(i_{e}^{\ast}E).
\]
In this case, however, we have twisted the Weyl action: as a \(C_{2}\)-module, we have an isomorphism
\[
\pi_{\mstar}(E)\big(C_{2},C_{2}\times (V\oplus\mathbb R^{n})\big)\cong \pi_{n}(i_{e}^{\ast}E)\otimes \sigma_{\mathbb Z},
\]
where \(\sigma_{\mathbb Z}\) is the integral sign representation. This observation has been used by many authors in the study of the homotopy fixed point spectral sequence for Hopkins--Miller spectra (see \cite{HMtmf13}).
\end{example}

\begin{remark}
The Mackey double coset formula also changes in the \(\RO\)-grading: there can be signs introduced which reflect the degree of the map on the underlying representation sphere. See, for example, \cite[Lemma 7.20]{HHR}.
\end{remark}

Smashing together maps gives us the external product.

\begin{definition}\label{def:ExternalProduct}
If \(x\in\pi_{\mstar}(E)(T,V)\) and \(y\in\pi_{\mstar}(E')(S,W)\), then we have an external product
\[
x\wedge y\in \pi_{\mstar}(E\wedge E')(T\times S,V\times W)
\]
given by the smash product of representing maps.
\end{definition}

Since this pairing is the one arising from the pairing of homotopy classes of functions in \(G\)-spectra, it has the usual properties.

\begin{proposition}
The external product is linear in both factors and satisfies the Frobenius relation:
\[
x\wedge T_{f}(y)=T_{Id\times f}(x\wedge y).
\]
\end{proposition}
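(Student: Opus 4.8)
The plan is to deduce both properties directly from the analogous facts for the smash product pairing on homotopy classes of maps in the homotopy category of $G$-spectra, transported through the definitions of $\pi_{\mstar}$, the transfer maps $T_f$, and the external product of Definition~\ref{def:ExternalProduct}. First I would recall that for genuine $G$-spectra the smash product $-\wedge-$ is a symmetric monoidal structure on the homotopy category, and in particular the induced pairing on $[-,-]^{G}$ groups is biadditive: given representing maps $x\colon M(V)\to E$ and $x'\colon M(V')\to E$ over the same pair $(T,V)$, the map $(x+x')\wedge y$ agrees with $x\wedge y + x'\wedge y$ after using the co-$H$-space (suspension) structure on $M(V)$ to form sums, since smashing with a fixed map $y$ is an additive functor. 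This gives linearity in the first factor; linearity in the second is symmetric, using instead the suspension structure on $M(W)$. No choices are involved here beyond those already implicit in writing $\pi_{\mstar}(E)(T,V)$ as an abelian group, so this step is essentially formal.

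Next I would address the Frobenius relation. Fix $f\colon S\to T$ a map of finite $G$-sets, a virtual bundle $V\to T$ with pullback $f^{\ast}V\to S$, an element $a\in\pi_{\mstar}(E)(S,f^{\ast}V)$, and $b\in\pi_{\mstar}(E')(T,W)$. The two sides of the Frobenius identity are $T_f(a)\wedge b$ and $T_{f\times\id_{W\text{-base}}}\!\big(a\wedge R_{\cdots}(b)\big)$, interpreted via the external product of Definition~\ref{def:ExternalProduct}; concretely the claim is
\[
T_{f}(a)\wedge b \;=\; T_{f\times\id}\big(a\wedge R_{f\times\id}(b)\big)
\]
in $\pi_{\mstar}(E\wedge E')\big(T\times S',\, V\times W\big)$, where $S'$ is the base of $W$ and the maps $f\times\id$ are the evident ones. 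The key input is that the stable transfer associated to $f$ is itself represented by a map of spectra (a Pontryagin--Thom collapse for the virtual bundle), and smashing that collapse map with the identity on any spectrum recovers the collapse map for $f\times\id$. Thus the identity reduces to the fact that the smash product of a collapse map with an arbitrary map equals the collapse map for the product of $G$-sets smashed with that arbitrary map --- a diagram-chase in the stable homotopy category using naturality of the Thom--Pontryagin construction under products of finite $G$-sets.

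The main obstacle is bookkeeping rather than conceptual: one must track the base $G$-sets and virtual bundles carefully so that the external product lands in the correct twisted grading, and one must confirm that the transfer map $T_f$ of the earlier Proposition is genuinely realized by the Thom--Pontryagin collapse (not merely abstractly characterized), so that ``smash with identity commutes with transfer'' is a statement about actual maps of spectra. Once that identification is in place --- it follows from the construction of $T_f$ via the collapse map for the virtual bundle $V$, exactly as in the Wirthm\"uller-style setup --- the remaining verification is the standard stable Frobenius argument, and the sign subtleties flagged in the Remark after the transfer Proposition are confined to the double coset formula and do not affect the bare Frobenius relation stated here. I would therefore present the proof as: (i) biadditivity is inherited from biadditivity of smash, (ii) $T_f$ is represented by a collapse map, (iii) smash-with-identity intertwines $T_f$ and $T_{f\times\id}$, hence (iv) the Frobenius relation follows by smashing the defining diagram for $T_f$ with the representing map for $b$.
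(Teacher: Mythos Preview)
Your proposal is correct and follows the same approach as the paper, which does not actually give a proof: the paper simply remarks, immediately before the proposition, that ``since this pairing is the one arising from the pairing of homotopy classes of functions in \(G\)-spectra, it has the usual properties,'' and then states the proposition without further argument. Your write-up is a careful unpacking of precisely that sentence---biadditivity from the additivity of smash in each variable, and Frobenius from the fact that transfers are realized by collapse maps so that smashing with an identity intertwines \(T_f\) with \(T_{f\times\id}\)---so there is no divergence in strategy, only in level of detail.
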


The multiplication in the \(\RO\)-graded context can be a little more confusing, since elements are attached to virtual representations for different groups. To effectively compare them, the elements must first be restricted to a maximal common subgroup. In general, we have many ways to represent this. Conceptually, the \(\RO\)-graded group actually remembers more information, including not only the elements but also the various Weyl conjugates. Thinking in this way, the \(\RO\)-graded products will not only record the product we would expect but also include any of the pairwise products of restrictions to conjugate subgroups.

If \(T=S\), then we have a canonical pullback diagram
\[
\begin{tikzcd}
{V\oplus W}
	\ar[r]
	\ar[d]
	&
{V\times W}
	\ar[d]
	\\
{T}
	\ar[r, "\Delta_{T}"']
	&
{T\times T.}
\end{tikzcd}
\]
Composing the external product with the restriction along the diagonal \(\Delta_{T}\) gives the usual product structure on the \(\RO(T)\)-graded homotopy of the ``restriction to \(T\)'' of a ring spectrum \(R\).

If \(T=G/H\) and \(S=G/K\), with \(H\) and \(K\) not necessarily conjugate, then we do not have as simple a picture. The classes \(x\) and \(y\) are maps
\[
G_{+}\smashover{H}S^{V}\xrightarrow{x} E\quad\&\quad G_{+}\smashover{K}S^{W}\xrightarrow{y} E',
\]
and smashing them together gives the map
\[
\big(G_{+}\smashover{H}S^{V}\big)\wedge\big(G_{+}\smashover{K}S^{W}\big)\xrightarrow{x\wedge y} E\wedge E'.
\]
The source is naturally the Thom spectrum of a virtual bundle on \(G/H\times G/K\), which can be rewritten by the double coset formula as
\[
G/H\times G/K\cong G\timesover{H} i_{H}^{\ast}G/K\cong \coprod_{HgK\in H\backslash G/K} G/(H\cap gKg^{-1}).
\]
The bundle over the summand associated to \(HgK\) is 
\[
i_{H\cap gKg^{-1}}^{\ast} V\oplus i_{H\cap gKg^{-1}}^{\ast}c_{g}^{\ast}W,
\]
and the corresponding map on this summand is
\[
\res_{H\cap gKg^{-1}}^{H}(x)\wedge \res_{H\cap gKg^{-1}}^{gKg^{-1}}(c_{g}^{\ast}y).
\]

\begin{corollary}[{\cite{AngeltveitBohmann}}]
If \(E\) has a multiplication in the homotopy category of genuine \(G\)-spectra, then the composition with the multiplication in \(E\) makes \(\m\pi_{\mstar}(E)\) into an \(\RO\)-graded Green functor.
\end{corollary}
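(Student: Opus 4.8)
The plan is to extract everything formally from the symmetric monoidal structure on the homotopy category of \(G\)-spectra together with the structural results already in hand. A multiplication \(\mu\colon E\wedge E\to E\) converts the external product of Definition~\ref{def:ExternalProduct} into an internal one: for \(x\in\pi_{\mstar}(E)(T,V)\) and \(y\in\pi_{\mstar}(E)(S,W)\), the composite
\[
M(V\times W)\simeq M(V)\wedge M(W)\xrightarrow{\ x\wedge y\ } E\wedge E\xrightarrow{\ \mu\ } E
\]
represents a class \(x\cdot y\in\pi_{\mstar}(E)(T\times S,\,V\times W)\). This is the structure map of the putative Green functor, and the double-coset description recalled just before the statement — \(T\times S\cong\coprod_{HgK} G/(H\cap gKg^{-1})\) together with its bundle decomposition — identifies precisely what \(x\cdot y\) restricts to on each orbit summand; in the case \(T=S\), pulling back along \(\Delta_{T}\) recovers the product on the \(\RO(T)\)-graded homotopy of the ``restriction to \(T\)'' of \(E\). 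Pinning down the notion of \(\RO\)-graded Green functor in this bundle-twisted setting follows \cite{AngeltveitBohmann, HHRkH}.

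I would then check the axioms one by one, each collapsing to a property of \(\mu\) and of \(\wedge\). \textbf{Unitality}: the unit \(\eta\colon S^{0}\to E\) represents \(1\in E^{G}_{0}(S^{0})\), and \(\mu\circ(\mathrm{id}\wedge\eta)\simeq\mathrm{id}\simeq\mu\circ(\eta\wedge\mathrm{id})\) gives \(1\cdot x=x=x\cdot 1\) after the canonical identifications \(G/G\times T\cong T\) and \(0\times V\cong V\). \textbf{Associativity}: homotopy-associativity of \(\mu\) and associativity of the external product (smashing of representing maps is associative up to coherence) give \((x\cdot y)\cdot z=x\cdot(y\cdot z)\) under \((T\times S)\times U\cong T\times(S\times U)\). \textbf{Multiplicativity of restriction} and \textbf{Frobenius reciprocity}: the maps \(R_{f}\) and \(T_{f}\) are induced by maps of finite \(G\)-sets and their bundles, so naturality of the external product in the source, the Frobenius relation already recorded above, and the fact that \(\mu\) is a morphism of spectra yield \(R_{f}(x\cdot y)=R_{f}(x)\cdot R_{f}(y)\) and \(T_{f}(R_{f}(x)\cdot y)=x\cdot T_{f}(y)\). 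Together these make each \(E^{H}_{V_{H}}(S^{0})\) an \(\RO(H)\)-graded ring on which restriction along subgroup inclusions and conjugations act by ring maps; if \(\mu\) is moreover homotopy-commutative, the symmetry \(\tau\) of \(\wedge\) with the swaps \(T\times S\cong S\times T\), \(V\times W\cong W\times V\) identifies \(x\cdot y\) with \(y\cdot x\) in the signed graded sense.

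The one place that needs genuine care, rather than formal nonsense, is the sign bookkeeping flagged in the Remark after the double-coset formula (compare \cite[Lemma~7.20]{HHR}): permuting representation-sphere coordinates — which is forced on us when comparing products of restrictions to conjugate subgroups across the double-coset decomposition of \(T\times S\), and again in the commutativity check — introduces signs equal to the degrees of the corresponding swap maps on the underlying spheres. The real work is verifying that these signs are mutually consistent, i.e.\ that the Mackey double-coset formula and the associativity/commutativity constraints all hold in the appropriately signed \(\RO\)-graded sense; I expect this to be the main obstacle, with everything else a direct consequence of the monoidal structure.
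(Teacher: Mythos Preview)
Your proposal is correct and is precisely the argument the paper has in mind: the Corollary carries no proof in the paper because it is meant to follow immediately from the preceding Proposition (bilinearity and Frobenius for the external product) together with the double-coset unpacking of \(x\wedge y\), and you have simply written out that verification. The only difference is granularity---you spell out unitality, associativity, multiplicativity of restriction, and the sign caveat, whereas the paper leaves all of this implicit as ``composition with the multiplication''.
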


In fact, there is a good \(G\)-symmetric monoidal category of \(R_{\mstar}\)-modules for any equivariant commutative ring spectrum \(R\). This will be developed in forthcoming joint work with Angeltveit--Bohmann. We will make use of this structure somewhat heavily in what follows. However, the only cases in which we will consider it are ones for which the structure is immediate from the definition of the objects, so there should be no confusion.

We close by summarizing the notation that will show up for the various kinds of gradings.
\begin{notation}\mbox{}

The wildcard \(\mstar\) will be used for gradings by \({\RO}\).

The wildcard \(\star\) will be used for gradings by \(\RO(G)\).

The wildcard \(\ast\) will be used for gradings by \(\Z\).
\end{notation}

\section{Free \texorpdfstring{\(R\)}{R}-homology}

In this section, let \(R\) be a fixed \(E_{\infty}\)-ring spectrum in genuine \(G\)-spectra. Equivariantly, this is weaker than being a commutative monoid in any of the good point-set models of spectra, but this is sufficient to have a good, symmetric monoidal category of modules over \(R\) \cite{BHModules}. 

\subsection{Free and projective}
It greatly simplifies much of the notation (and of our discussion of a basis) to allow ourselves to evaluate our homotopy Mackey functors on infinite \(G\)-sets and virtual representations on these. 

\begin{notation}
If \(T\) is a discrete \(G\)-set and \(V\) is a virtual bundle over \(T\), then let
\[
{R}_{\mstar}(E)(T,V)=\lim_{\longleftarrow} {R}_{\mstar}(E)(S,i_{S}^{\ast}V),
\]
where \(S\) ranges over all finite subsets of \(T\).
\end{notation}

Since Thom spectra of disjoint unions of spaces is the coproduct of the associated Thom spectra, we have a natural isomorphism
\[
{R}_{\mstar}(E)(T,V)\cong \big[M(V),R\wedge E\big]^{G}.
\]

\begin{definition}
A \(G\)-spectrum \(E\) has {\defemph{free \(R\)-homology}} or ``is \(R\)-free'' if there is a \(G\)-set \(T_{E}\) and a virtual vector bundle \(V_{E}\) over \(T_{E}\)  such that we have an equivalence of \(R\)-modules
\[
R\wedge E \simeq R\wedge M(V_{E}).
\]
The full subcategory of \(Sp^{G}\) spanned by the spectra with free \(R\)-homology will be denoted
\[
\Sp^{G}_{R,fr}.
\]

It has {\defemph{projective \(R\)-homology}} if \(R\wedge E\) is a retract of an \(R\)-module of the form \(R\wedge M(V)\) for some virtual vector bundle \(V\) over a \(G\)-set. The full subcategory of \(Sp^{G}\) spanned by the spectra with projective \(R\)-homology will be denoted
\[
\Sp^{G}_{R,pr}.
\]
\end{definition}

\begin{remark}
The use of ``free'' here is to bring to mind a free module. In the homotopy category of \(R\)-modules, the \(R\)-module \(R\wedge (G_{+}\smashover{H}S^{V})\) corepresents the functor
\[
E\mapsto \pi_{V}^{H}(E),
\]
on the category of \(R\)-modules, and hence maps out of it correspond to certain elements in this \({\RO}\)-graded Mackey functor.
\end{remark}

\begin{definition}
If \(E\) has free \(R\)-homology, then a {\defemph{basis}} for the \(R\)-homology of \(E\) is an element
\[
\vec{x}\in R_{\mstar}(E)(T_{E},V_{E})
\] 
for a \(G\)-set \(T_{E}\) and a virtual bundle \(V_{E}\) over \(T_{E}\), such that the induced map
\[
R\wedge M(V_{E})\xrightarrow{R\wedge\vec{x}} R\wedge E
\]
is an equivalence.
\end{definition}

We can restate the definition of a basis using an orbit decomposition of \(T\). A choice of points in each orbit for \(T\) gives an equivariant isomorphism
\[
T\cong \coprod_{t\in T/G} G/H_{t},
\]
and if we let \(V_{E,t}\) be the restriction of \(V_{E}\) to the orbit \(G/H_{t}\), then
\[
{R}_{\mstar}(E)(T,V)\cong \prod_{t\in T/G}R_{V_{E,t}}^{H_{t}}(E).
\]
A basis then is a collection of elements 
\[
x_{t}\colon G_{+}\smashover{H_{t}}S^{V_{E,t}}\to R\wedge E
\]
such that the induced map
\[
R\wedge \left(\bigvee_{t\in T/G} G_{+}\smashover{H_{t}}S^{V_{E,t}}\right)\to R\wedge E
\]
is an equivalence. We will use both formulations.

Just as for vector spaces, a basis is a choice of additional data which aids in explicit computation. In particular, describing product structures is greatly simplified with a basis. 

\begin{remark}
Although a basis is given by specifying certain elements in certain \({\RO}\)-graded stems, the freeness actually gives us a host of related elements. Consider \(M=R\wedge (G_{+}\smashover{H} S^{V})\). The unit map \(S^{0}\to R\) gives a distinguished map
\[
G_{+}\smashover{H}S^{V}\to M
\]
which gives a basis. Unpacking the adjunction, this corresponds to the \(H\)-equivariant map
\[
S^{V}\to i_{H}^{\ast} R\wedge i_{H}^{\ast}(G)_{+}\smashover{H} S^{V}
\]
induced by the inclusion
\[
S^{V}\cong H_{+}\smashover{H} S^{V}\hookrightarrow i_{H}^{\ast}G_{+}\smashover{H} S^{V}.
\]
If \(H\neq G\), then there are many other summands. In particular, any element \(g\in N_{G}(H)\) gives us a summand 
\[
gH_{+}\smashover{H} S^{V},
\] 
which is the representation sphere for the pullback of the representation \(V\) along the conjugation-by-\(g\) automorphism of \(H\). When \(V\) is in the image of the restriction of a representation of \(N_{G}(H)\), this is just recording the Weyl-conjugates of our original element. 
\end{remark}

\begin{example}
For any \(R\), a basis for \(R\wedge (G_{+}\wedge S^{1})\) is the data of an element 
\[
x\in \pi_{1}\Big(\bigvee_{|G|}i_{e}^{\ast}\Sigma R\Big)\cong \bigoplus_{|G|}\pi_{0}\big(i_{e}^{\ast}R\big)
\] 
such that the induced action-map
\[
\Z[G]\otimes \pi_0(i_e^{\ast}R)\to \bigoplus_{|G|}\pi_{0}\big(i_{e}^{\ast}R\big)
\]
is an isomorphism. It also records (linearly independent) classes for all \(\gamma\in G\):
\[
\gamma\cdot x\in \bigoplus_{|G|}\pi_{0}(i_{e}^{\ast}R)
\] 
(and in fact, the Mackey functor homotopy group is \(\Ind_{e}^{G} \pi_{0}(i_{e}^{\ast}R)\)).
\end{example}

It is helpful to keep in mind the example of Bredon homology with coefficients in a commutative Green functor \(\mR\). The Eilenberg--Mac Lane spectrum associated to a commutative Green functor is always \(E_{\infty}\), so we can apply this general formalism.

\begin{example}\label{exam:Underlying}
If \(G=\{e\}\), then a basis for the homology of \(E\) with coefficients in \(\mR=R\) (an ordinary commutative ring) is the same as a basis for the graded \(R\)-module \(H_{\ast}(E;R)\).
\end{example}

\begin{example}
Kronholm and Hogle--May showed that if \(X\) is a finite \(Rep(C_2)\)-complex (meaning a \(C_2\)-complex formed by attaching disks in representations along their boundaries), then \(X\) has free \(H\m{\F}_2\)-homology (with no summands induced up from the trivial group) \cite{Kronholm}, \cite{HogleMay}.
\end{example}

\begin{example}
C.~May's decomposition theorem for the [co]homology of a finite \(C_{2}\)-CW complex says that for any finite \(C_{2}\)-complex \(X\), we have a splitting
\[
H\m\F_{2}\wedge X\simeq H\m\F_{2}\wedge\left(\bigvee C_{2+}\smashover{H}S^{V} \vee \bigvee \Sigma^{k_i}S(n_{i}\sigma)_{+}\right)
\]
where in the second sum, \(n_{i}\geq 2\) and \(\sigma\) is the sign representation \cite{MayDecomp}. Thus \(C_{2}\) spaces have free \(H\m\F_{2}\)-homology if and only if this second sum vanishes.
\end{example}

\begin{example}
Hazel's computation of the Bredon homology of \(C_{2}\)-surfaces shows that every connected \(C_{2}\)-surface for which the action is not free has free \(H\m\F_{2}\)-homology \cite[Theorem 6.6]{HazelSurfaces}. 
\end{example}

\begin{example}
Ricka extended the Hu--Kriz computation of the dual Steenrod algebra for \(\m\F_{2}\) and showed that \(H\m\F_{2}\) has free \(H\m\F_{2}\)-homology \cite{RickaSubalg}, \cite{HuKriz}.
\end{example}

There are two important restricted cases that show up often in computations. The proof of the following theorem is just by observation.
\begin{theorem}\label{thm:InducedGrading}
Let \(E\) be a spectrum with free \(R\)-homology.
\begin{enumerate}
\item If the basis comes from virtual representations of \(G\) exclusively, then the \({\RO}\)-graded homotopy is induced up from the \(\RO(G)\)-graded homotopy Mackey functors.
\item If the basis comes from trivial representations of \(G\), then the \({\RO}\)-graded homotopy is induced up from the \(\Z\)-graded homotopy Mackey functors.
\end{enumerate}
In both cases, we can extend to induced cells, provided we again only consider representations of \(G\) and trivial representations, respectively. 
\end{theorem}

\subsection{Closure properties of \texorpdfstring{$\Sp^{G}_{R,\fpr}$}{R free}}

The spectrum with \(R\)-free or projective homology enjoy a number of useful closure properties.

\begin{notation}
Let {\fpr} stand for either ``fr'' or ``pr''.
\end{notation}

\subsubsection{Closure under sums}

\begin{proposition}
The adjoint pair \(G_{+}\smashover{H}(\mhyphen)\dashv i_{H}^{\ast}\) on equivariant spectra descends to an adjoint pair
\[
G_{+}\smashover{H}(\mhyphen)\colon\Sp^{H}_{i_{H}^{\ast}R,\fpr}\rightleftarrows \Sp^{G}_{R,\fpr}\colon i_{H}^{\ast}.
\]
A basis for one gives the other via restriction or induction.
\end{proposition}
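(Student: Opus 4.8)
The plan is to check that each of the two functors carries the relevant full subcategory into the other and that bases transport along them; the adjoint pair then descends automatically, since an adjunction between ambient categories restricts to any pair of full subcategories that the two functors respect (the unit and counit land in the subcategories and the triangle identities are inherited as equations). Two structural inputs are needed. The first is the projection formula
\[
Y\wedge\big(G_+\smashover{H} X\big)\simeq G_+\smashover{H}\big(i_{H}^{\ast} Y\wedge X\big),
\]
natural in the $G$-spectrum $Y$ and the $H$-spectrum $X$; taking $Y=R$ and using the module structures of \cite{BHModules}, this promotes $G_+\smashover{H}(\mhyphen)\dashv i_{H}^{\ast}$ to an adjoint pair between $i_{H}^{\ast}R$-modules and $R$-modules. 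The second is the compatibility of Thom spectra with change of groups: $i_{H}^{\ast} M(W)\simeq M(i_{H}^{\ast} W)$ for a virtual bundle $W$ over a $G$-set, and $G_+\smashover{H} M(V)\simeq M\big(G\timesover{H} V\big)$ for a virtual bundle $V$ over an $H$-set, the latter because $G\timesover{H}(H/H_{s})\cong G/H_{s}$ over each orbit and induction commutes with wedges.

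\emph{Restriction.} Let $E\in\Sp^{G}_{R,\fpr}$ with basis $\vec{x}\colon M(V_{E})\to R\wedge E$ inducing an equivalence $R\wedge M(V_{E})\xrightarrow{\sim} R\wedge E$ of $R$-modules (a retract in the ``pr'' case). Since $i_{H}^{\ast}$ is strong symmetric monoidal and preserves equivalences and retracts, applying it and using $i_{H}^{\ast} M(V_{E})\simeq M(i_{H}^{\ast} V_{E})$ gives an equivalence (resp.\ retract) of $i_{H}^{\ast}R$-modules $i_{H}^{\ast}R\wedge M(i_{H}^{\ast} V_{E})\xrightarrow{\sim} i_{H}^{\ast}R\wedge i_{H}^{\ast} E$ induced by the restricted class $i_{H}^{\ast}\vec{x}$. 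Hence $i_{H}^{\ast} E\in\Sp^{H}_{i_{H}^{\ast}R,\fpr}$, with basis $i_{H}^{\ast}\vec{x}$.

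\emph{Induction and conclusion.} Let $E\in\Sp^{H}_{i_{H}^{\ast}R,\fpr}$ with basis $\vec{x}\colon M(V_{E})\to i_{H}^{\ast}R\wedge E$. Applying $G_+\smashover{H}(\mhyphen)$ and concatenating the projection formula with the Thom spectrum identity yields a composite equivalence of $R$-modules
\[
R\wedge M\big(G\timesover{H} V_{E}\big)\simeq G_+\smashover{H}\big(i_{H}^{\ast}R\wedge M(V_{E})\big)\xrightarrow{G_+\smashover{H}(i_{H}^{\ast}R\wedge\vec{x})} G_+\smashover{H}\big(i_{H}^{\ast}R\wedge E\big)\simeq R\wedge\big(G_+\smashover{H} E\big)
\]
(a retract in the ``pr'' case), so $G_+\smashover{H} E\in\Sp^{G}_{R,\fpr}$. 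Since $R\wedge M(G\timesover{H} V_{E})$ is the free $R$-module corepresenting the appropriate piece of $\m{RO}$-graded homotopy (the Remark following the definition of $R$-freeness), this equivalence has the form $R\wedge\vec{y}$ for a unique class $\vec{y}\in R_{\mstar}(G_+\smashover{H} E)\big(G\timesover{H} T_{E},\, G\timesover{H} V_{E}\big)$, which is therefore a basis; unwinding the projection formula identifies $\vec{y}$ as the image of $\vec{x}$ under induction followed by the adjunction isomorphism. With both functors now shown to preserve the subcategories, the adjoint pair descends, and the two descriptions of a basis correspond under restriction and induction. The one point that demands genuine care is this last identification in the induction step — matching the induced $R$-module equivalence with an honest class in the $\m{RO}$-graded homotopy of $G_+\smashover{H} E$ over $G\timesover{H} T_{E}$ — but it is forced by the projection formula together with the double-coset description of $i_{H}^{\ast}(G_+\smashover{H} E)$ (cf.\ \cite[Lemma 7.20]{HHR}), so there is no real obstacle.
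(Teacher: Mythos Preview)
Your argument is correct and follows the same route as the paper's proof: reduce to the free case (retracts are preserved by any functor), then observe that restriction and induction of the relevant Thom spectra are again Thom spectra of the required form. You have simply made explicit the two ingredients the paper leaves implicit---the projection formula $R\wedge(G_{+}\smashover{H}E)\simeq G_{+}\smashover{H}(i_{H}^{\ast}R\wedge E)$ and the identification $G_{+}\smashover{H}M(V)\simeq M(G\timesover{H}V)$---and you have additionally tracked how the basis element itself transports, which the paper asserts but does not spell out.
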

\begin{proof}
Since these are full subcategories, and since retracts are preserved by any functor, it suffices to show that restriction and induction preserve \(R\)-free spectra. For restriction, we just use the restriction of the Thom spectrum. For induction, we note the equivalence:
\[
G_{+}\smashover{H} M(V)\simeq M(G\timesover{H} V),
\]
where \(G\timesover{H}V\) is the induced bundle over \(G\timesover{H} T\).
\end{proof}

\begin{proposition}\label{prop:FreeForSums}
The category \(\Sp^{G}_{R,\fpr}\) is closed under arbitrary coproducts. 
A basis for the wedge is the sum of the bases.
\end{proposition}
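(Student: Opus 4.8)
The plan is to reduce everything to two formal facts: smashing with \(R\) commutes with arbitrary wedges of \(G\)-spectra (the smash product of spectra preserves colimits in each variable), and — as already noted above — the Thom spectrum construction carries a disjoint union of base \(G\)-sets to the wedge of the associated Thom spectra. Let \(\{E_{\alpha}\}_{\alpha\in A}\) be a set-indexed family of \(G\)-spectra with free \(R\)-homology, and for each \(\alpha\) fix a basis \(\vec{x}_{\alpha}\in R_{\mstar}(E_{\alpha})(T_{\alpha},V_{\alpha})\), so that the induced \(R\)-module map \(R\wedge M(V_{\alpha})\to R\wedge E_{\alpha}\) is an equivalence. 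Put \(T=\coprod_{\alpha}T_{\alpha}\), a (possibly infinite) discrete \(G\)-set — exactly the generality for which \(R_{\mstar}(-)(T,V)\) was defined — and let \(V\) be the virtual bundle over \(T\) restricting to \(V_{\alpha}\) on each \(T_{\alpha}\), so that \(M(V)\simeq\bigvee_{\alpha}M(V_{\alpha})\).

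First I would assemble the candidate basis. Using \(M(V)\simeq\bigvee_{\alpha}M(V_{\alpha})\), an element of \(R_{\mstar}(\bigvee_{\beta}E_{\beta})(T,V)=[M(V),R\wedge\bigvee_{\beta}E_{\beta}]^{G}\) is the same datum as a family of maps indexed by \(\alpha\), and I let \(\vec{x}\) be the family whose \(\alpha\)th member is the composite
\[
M(V_{\alpha})\xrightarrow{\vec{x}_{\alpha}}R\wedge E_{\alpha}\hookrightarrow R\wedge\bigvee_{\beta}E_{\beta};
\]
this is what is meant by the sum of the bases. To see \(\vec{x}\) is a basis I must check that the associated \(R\)-module map \(R\wedge M(V)\to R\wedge\bigvee_{\beta}E_{\beta}\) is an equivalence. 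Since smashing with \(R\) preserves wedges, this map is identified with the wedge over \(\alpha\) of the composites \(R\wedge M(V_{\alpha})\to R\wedge E_{\alpha}\hookrightarrow\bigvee_{\beta}(R\wedge E_{\beta})\), the first map being an equivalence by hypothesis. A wedge of equivalences is an equivalence (a wedge is the filtered colimit of its finite subwedges, which agree with finite products), so \(R\wedge\vec{x}\) is an equivalence and \(\bigvee_{\beta}E_{\beta}\) is \(R\)-free with basis \(\vec{x}\).

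For the projective case I would run the same argument with retracts in place of equivalences. If each \(R\wedge E_{\alpha}\) is a retract of \(R\wedge M(V_{\alpha})\), with section \(i_{\alpha}\) and retraction \(r_{\alpha}\), then \(\bigvee_{\alpha}i_{\alpha}\) together with the map \(\bigvee_{\alpha}(R\wedge M(V_{\alpha}))\to\bigvee_{\beta}(R\wedge E_{\beta})\) assembled from the composites \(R\wedge M(V_{\alpha})\xrightarrow{r_{\alpha}}R\wedge E_{\alpha}\hookrightarrow\bigvee_{\beta}(R\wedge E_{\beta})\) exhibit \(R\wedge\bigvee_{\beta}E_{\beta}\simeq\bigvee_{\beta}(R\wedge E_{\beta})\) as a retract of \(\bigvee_{\alpha}(R\wedge M(V_{\alpha}))\simeq R\wedge M(V)\), so \(\bigvee_{\beta}E_{\beta}\) has projective \(R\)-homology.

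I do not expect a genuine obstacle; the content is entirely formal, and the only points needing a moment's care are bookkeeping: that the smash product with \(R\) (being a left adjoint) commutes with the arbitrary wedges appearing here, and that, in the projective case, retract data for each \(E_{\alpha}\) really does assemble to retract data for the wedge — which holds because the wedge is the categorical coproduct, so maps out of it are determined by their restrictions to the summands and the required composition identities can be checked one summand at a time.
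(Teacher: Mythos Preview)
Your argument is correct and is exactly the paper's approach, only spelled out in full: the paper's one-sentence proof invokes precisely the two facts you use, namely that smash distributes over wedges and that a wedge of Thom spectra over \(G\)-sets is again such a Thom spectrum. Your explicit treatment of the projective case (which the paper's proof leaves implicit) is a welcome addition.
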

\begin{proof}
The smash product distributes over wedges, and the wedge of Thom spectra of virtual bundles over \(G\)-sets is again a Thom spectrum of a virtual bundle over a \(G\)-set.
\end{proof}

\subsubsection{Closure under base-change}

The notions of free and projectives also work well with base-change.

\begin{proposition}\label{prop:FreeforBaseChange}
A map \(f\colon R\to R'\) of \(E_{\infty}\) ring spectra induces a map
\[
f_{\ast}\colon \Sp^{G}_{R,\fpr}\to \Sp^{G}_{R',\fpr}.
\]
A basis \(\vec{x}\) for \(E\) over \(R\) gives a basis \(f_{\ast}(\vec{x})\) be composing with \(f\).
\end{proposition}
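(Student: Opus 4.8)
The plan is to use extension of scalars along \(f\). First I would recall from \cite{BHModules} the \(G\)-symmetric monoidal category of \(R\)-modules together with the base-change functor \(f_{!}=R'\wedge_{R}(\mhyphen)\) from \(R\)-modules to \(R'\)-modules. The one structural fact I need is that \(f_{!}\) intertwines the two free-module functors: the composite \(f_{!}\circ(R\wedge(\mhyphen))\) is naturally equivalent to \(R'\wedge(\mhyphen)\) as functors out of \(\Sp^{G}\), since both are left adjoint to the forgetful functor back to \(\Sp^{G}\). Consequently, for every \(G\)-spectrum \(X\) there is a natural equivalence \(f_{!}(R\wedge X)\simeq R'\wedge X\) of \(R'\)-modules.

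Now suppose \(E\in\Sp^{G}_{R,\fpr}\). In the free case \(R\wedge E\simeq R\wedge M(V_{E})\) as \(R\)-modules; applying \(f_{!}\) and the identifications above gives
\[
R'\wedge E\simeq f_{!}(R\wedge E)\simeq f_{!}\bigl(R\wedge M(V_{E})\bigr)\simeq R'\wedge M(V_{E})
\]
as \(R'\)-modules, so \(E\in\Sp^{G}_{R',\fpr}\). In the projective case, \(R\wedge E\) is a retract of some \(R\wedge M(V)\) as \(R\)-modules, and since any functor preserves retracts, \(R'\wedge E\) is a retract of \(R'\wedge M(V)\). Either way \(E\) lies in the target subcategory; as \(f_{\ast}\) is then simply the inclusion of one full subcategory into another, there is nothing further to check on morphisms.

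For the statement about bases, I would trace the definitions. A basis \(\vec{x}\in R_{\mstar}(E)(T_{E},V_{E})=[M(V_{E}),R\wedge E]^{G}\) is one for which \(R\wedge M(V_{E})\xrightarrow{R\wedge\vec{x}}R\wedge E\) is an equivalence of \(R\)-modules, and \(f_{\ast}(\vec{x})\in[M(V_{E}),R'\wedge E]^{G}\) is the image of \(\vec{x}\) under post-composition with \(f\wedge\id_{E}\colon R\wedge E\to R'\wedge E\). The point is that, under the natural equivalences \(f_{!}(R\wedge M(V_{E}))\simeq R'\wedge M(V_{E})\) and \(f_{!}(R\wedge E)\simeq R'\wedge E\), the map \(R'\wedge M(V_{E})\xrightarrow{R'\wedge f_{\ast}(\vec{x})}R'\wedge E\) is identified with \(f_{!}(R\wedge\vec{x})\). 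Granting this, \(R'\wedge f_{\ast}(\vec{x})\) is an equivalence because \(f_{!}\) sends the equivalence \(R\wedge\vec{x}\) to one, so \(f_{\ast}(\vec{x})\) is a basis.

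The main obstacle is the compatibility claim in the last paragraph: identifying \(R'\wedge f_{\ast}(\vec{x})\) with \(f_{!}(R\wedge\vec{x})\) under the canonical equivalences. This is a naturality statement relating the unit maps of the two free–forgetful adjunctions along \(f\), and pinning it down cleanly requires the precise universal properties of the module categories from \cite{BHModules}; it is routine but not entirely formal. Everything else in the argument is purely formal.
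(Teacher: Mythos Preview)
Your proposal is correct and follows essentially the same approach as the paper: the paper's proof is the single sentence ``This follows from base-changing the equivalence \(R\wedge E\simeq R\wedge M(V_{E})\) along the map \(R\to R'\),'' which is exactly your argument with the details suppressed. The compatibility concern you flag at the end is real but routine, and the paper does not address it.
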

\begin{proof}
This follows from base-changing the equivalence \(R\wedge E\simeq R\wedge M(V_{E})\) along the map \(R\to R'\).
\end{proof}

\subsubsection{Closure under products}
The categories of frees and projectives are also closed under the [twisted] smash products on \(G\)-spectra, being closed under the norms which \(R\) has.

\begin{proposition}\label{prop:FreeForProducts}
The category \(\Sp^{G}_{R,\fpr}\) is a symmetric monoidal subcategory of \(\Sp^{G}\) for the smash product.
A basis for \(E\) and \(E'\) gives a basis for \(E\wedge E'\) by boxing them together.
\end{proposition}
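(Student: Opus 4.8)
The plan is to reduce the whole statement to a single geometric fact --- that an external smash product of Thom spectra of virtual bundles over \(G\)-sets is again the Thom spectrum of a virtual bundle over a \(G\)-set --- combined with the standard calculus of free \(R\)-modules. Since \(\Sp^{G}_{R,\fpr}\) is by construction a \emph{full} subcategory of \(\Sp^{G}\), it suffices to show that it contains the smash unit \(S^{0}\) and is closed under \(\wedge\); the associativity, symmetry, and unit coherence isomorphisms are then automatically inherited, so the inclusion is (strong) symmetric monoidal. The unit \(S^{0}\) is the Thom spectrum of the zero bundle over the one-point \(G\)-set, hence is \(R\)-free with basis the unit class.

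For closure under \(\wedge\), write \(R\wedge E\simeq R\wedge M(V_{E})\) for \(V_{E}\) a virtual bundle over a \(G\)-set \(T_{E}\), and similarly \(R\wedge E'\simeq R\wedge M(V_{E'})\) over \(T_{E'}\). Because \(R\wedge X\) is the free \(R\)-module on the spectrum \(X\), the relative smash product satisfies \((R\wedge X)\smashover{R}(R\wedge Y)\simeq R\wedge(X\wedge Y)\), naturally in \(X\) and \(Y\), so
\[
R\wedge(E\wedge E')\simeq (R\wedge E)\smashover{R}(R\wedge E')\simeq (R\wedge M(V_{E}))\smashover{R}(R\wedge M(V_{E'}))\simeq R\wedge\big(M(V_{E})\wedge M(V_{E'})\big).
\]
Now \(M(V_{E})\wedge M(V_{E'})\) is the Thom spectrum of the virtual bundle \(V_{E}\times V_{E'}\) over the \(G\)-set \(T_{E}\times T_{E'}\): this is exactly the external product of Thom classes of Definition~\ref{def:ExternalProduct}, and the decomposition of \(T_{E}\times T_{E'}\) into orbits \(G/(H\cap gKg^{-1})\) with the bundles indicated in the previous subsection identifies it concretely. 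Hence \(R\wedge(E\wedge E')\simeq R\wedge M(V_{E}\times V_{E'})\) and \(E\wedge E'\) is \(R\)-free. For projectives one argues identically: a retract of \(R\wedge M(V)\) smashed over \(R\) with a retract of \(R\wedge M(W)\) is a retract of \(R\wedge M(V\times W)\), since \((-)\smashover{R}(-)\) carries retracts to retracts.

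For the basis statement, let \(\vec x\in R_{\mstar}(E)(T_{E},V_{E})\) and \(\vec y\in R_{\mstar}(E')(T_{E'},V_{E'})\) be bases, represented by maps \(M(V_{E})\to R\wedge E\) and \(M(V_{E'})\to R\wedge E'\) that become equivalences after smashing with \(R\). Form the external product \(\vec x\wedge\vec y\) and postcompose with the multiplication \(R\wedge R\to R\) to obtain the box product \(\vec x\,\Box\,\vec y\in R_{\mstar}(E\wedge E')(T_{E}\times T_{E'},\,V_{E}\times V_{E'})\). Under the identifications above, smashing \(\vec x\,\Box\,\vec y\) with \(R\) recovers \((R\wedge\vec x)\smashover{R}(R\wedge\vec y)\), a relative smash product of \(R\)-module equivalences, hence an equivalence; so \(\vec x\,\Box\,\vec y\) is a basis for \(E\wedge E'\).

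The substance here is not homotopical but bookkeeping: one must check carefully that the external product of Thom classes genuinely exhibits \(M(V_{E})\wedge M(V_{E'})\) as \(M(V_{E}\times V_{E'})\) \emph{equivariantly}, using the double-coset description of products from the previous subsection (including the possible sign subtleties of the equivariant double-coset formula noted earlier), and that the abstract chain of equivalences is the one genuinely \emph{induced} by \(\vec x\,\Box\,\vec y\) rather than merely an abstract isomorphism. Keeping the free-module identity \((R\wedge X)\smashover{R}(R\wedge Y)\simeq R\wedge(X\wedge Y)\) natural in both variables makes the remaining verifications formal; the analogous closure under the norms which \(R\) carries is then a variant of the same argument applied to the norm of a Thom spectrum.
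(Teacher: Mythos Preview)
Your proof is correct and follows essentially the same approach as the paper: reduce to the free case, use that \(M(V)\wedge M(V')\simeq M(V\times V')\) for the external product bundle over \(T\times T'\), and invoke that \(R\wedge(\mhyphen)\) is strong symmetric monoidal (which is your identity \((R\wedge X)\smashover{R}(R\wedge Y)\simeq R\wedge(X\wedge Y)\)). You are more explicit than the paper about the unit, the projective case, and the basis statement, but the argument is the same.
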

\begin{proof}
Again, it suffices to show this for free spectra. If \(V\) and \(V'\) are virtual vector bundles on \(T\) and \(T'\) respectively, then we have a natural equivalence
\[
M(V)\wedge M(V')\simeq M(V\times V'),
\]
where the latter is just the Thom spectrum of product of \(V\) and \(V'\) over \(T\times T'\). The result follows from recalling that the functor \(R\wedge(\mhyphen)\) is a strong symmetric monoidal functor from \(G\)-spectra to \(R\)-modules.
\end{proof}

This gives us a kind of weak K\"unneth theorem.
\begin{theorem}\label{thm:FreeForModules}
If \(E\in\Sp^{G}_{R,\fpr}\), then for any \(R\)-module \(M\), then \(M\wedge E\) is a summand of \(M\wedge M(V_{E})\) for some virtual vector bundle \(V_{E}\), and hence the  multiplication gives a natural isomorphism
\[
R_{\mstar}(E)\boxover{R_{\mstar}} {\pi}_{\mstar}(M)\to {\pi}_{\mstar}(E\wedge M).
\]
\end{theorem}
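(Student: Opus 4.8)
The plan is to reduce everything to the case of a single induced virtual representation sphere $G_+\smashover{H}S^{V}$, where both sides can be written down by hand.

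First I would dispatch the ``summand'' assertion while simultaneously replacing $E$ by $M(V_E)$. By definition of $\Sp^{G}_{R,\fpr}$ the $R$-module $R\wedge E$ is a retract of $R\wedge M(V_E)$ for some virtual bundle $V_E$ over a $G$-set $T_E$, an equivalence when $E$ is $R$-free. Since $M$ is an $R$-module, $M\wedge E\simeq M\wedge_R(R\wedge E)$, so applying $M\wedge_R(\mhyphen)$ exhibits $M\wedge E$ as a retract of $M\wedge_R(R\wedge M(V_E))\simeq M\wedge M(V_E)$; that is the first claim. Both sides of the comparison map depend on $E$ only through the $R$-module $R\wedge E$ (the left through $R_{\mstar}(E)\cong[M(\mhyphen),R\wedge E]^{G}$, the right through $(R\wedge E)\wedge_R M$), so the comparison for $E$ is a retract of that for $M(V_E)$; since retracts of isomorphisms are isomorphisms, it suffices to treat $E=M(V_E)$. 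Picking a point in each orbit gives $T_E\cong\coprod_{t\in T_E/G}G/H_t$ and hence $M(V_E)\simeq\bigvee_{t}G_+\smashover{H_t}S^{V_{E,t}}$, and both $R_{\mstar}(\mhyphen)\boxover{R_{\mstar}}\mpi_{\mstar}(M)$ and $\mpi_{\mstar}(\mhyphen\wedge M)$ carry this wedge into the corresponding direct sum compatibly with the comparison map, so we are reduced to $E=G_+\smashover{H}S^{V}$ for a single subgroup $H$ and virtual $H$-representation $V$.

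For this single cell I would compute both sides directly. On the algebraic side, the Remark following the definition of $R$-freeness says $R\wedge(G_+\smashover{H}S^{V})$ corepresents $\pi^{H}_{V}$ on $R$-modules, so $R_{\mstar}(G_+\smashover{H}S^{V})$ is the free $R_{\mstar}$-module on a single generator $\iota$ sitting in bidegree $(G/H,V)$; boxing such a free module over $R_{\mstar}$ against $\mpi_{\mstar}(M)$ returns the $V$-shifted induction to $G$ of $\mpi_{\mstar}(i_H^{\ast}M)$. On the topological side, the projection formula for the adjunction $G_+\smashover{H}(\mhyphen)\dashv i_H^{\ast}$ gives a natural $R$-module equivalence $M\wedge(G_+\smashover{H}S^{V})\simeq G_+\smashover{H}(i_H^{\ast}M\wedge S^{V})$, whose $RO$-graded homotopy Mackey functor is exactly that same shifted induction. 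Unwinding the multiplication map --- the external product of Definition~\ref{def:ExternalProduct} composed with the $R$-action $R\wedge M\to M$ --- shows that on $\iota$ it is precisely this projection-formula equivalence, and naturality in $M$ and in the basis element $\iota$ is immediate.

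The only genuine work is the bookkeeping in the last step: one must pin down that ``free $R_{\mstar}$-module on a generator in bidegree $(G/H,V)$'' is the right notion and that $\boxover{R_{\mstar}}$ against it agrees with shifted induction inside the $G$-symmetric monoidal category of $R_{\mstar}$-modules of Angeltveit--Bohmann and the author. As remarked in the text this structure is only ever invoked here in situations forced by the definitions, so the identification of the two $RO$-graded Mackey functors --- and of the multiplication with the projection-formula equivalence --- drops out of the adjunction and its projection formula. A reader who prefers to avoid the module category can instead verify the isomorphism groupwise, evaluating both sides at each pair $(G/K,W)$, expanding $M\wedge(G_+\smashover{H}S^{V})$ via the double coset decomposition of $G/H\times G/K$ recorded earlier, and matching terms.
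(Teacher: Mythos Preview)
Your proposal is correct and follows essentially the same route as the paper: reduce to the free case via the retract $R\wedge E\hookrightarrow R\wedge M(V_E)$, use $M\wedge E\simeq M\smashover{R}(R\wedge E)$ to replace $E$ by $M(V_E)$, distribute over the wedge, and check the single-cell case. The only difference is that where the paper closes with the one-line ``the result follows by the definition of the representables,'' you unpack that step explicitly via the projection formula and the identification of $R_{\mstar}(G_+\smashover{H}S^V)$ with the free $R_{\mstar}$-module on a generator in bidegree $(G/H,V)$; this is exactly what the paper's sentence means.
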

\begin{proof}
Again, it suffice to show for \(E\) \(R\)-free. By assumption, there is a splitting of \(R\)-modules
\[
R\wedge M(V_{E})\simeq R\wedge E.
\]
This gives an equivalence of \(R\)-modules
\[
E\wedge M\simeq (R\wedge E)\smashover{R} M\simeq \big(R\wedge M(V_{E})\big)\smashover{R} M
\simeq M(V_{E})\wedge M.
\]
Since the smash product distributes over the wedge, the latter spectrum is a wedge of \(R\)-modules of the form
\[
(G_{+}\smashover{H}S^{V})\wedge M.
\]
The result follows by the definition of the representables.
\end{proof}

If the basis is via representations of \(G\) or trivial representations, then this recovers the K\"unneth theorem of Lewis--Mandell \cite{LewisMandell}.

\begin{corollary}
Let \(E\) be in \(\Sp^{G}_{R,\fpr}\) and let \(M\) be an \(R\)-module. 
\begin{enumerate}
\item If a basis for \(E\) can be chosen such that only virtual representations of \(G\) are used, then we have an isomorphism
\[
\m{\pi}_{\star}(E\wedge M)\cong \m{R}_{\star}(E)\boxover{\m{R}_{\star}} \m{\pi}_{\star}(M).
\]
\item If a basis for \(E\) can be chosen such that only trivial representation of \(G\) are used, then we have an isomorphism
\[
\m{\pi}_{\ast}(E\wedge M)\cong \m{R}_{\ast}(E)\boxover{\m{R}_{\ast}} \m{\pi}_{\ast}(M).
\]
\end{enumerate}
\end{corollary}

In the special case that the module \(M\) is in fact \(R\wedge E'\) for some \(R\)-free or projective \(E'\), this shows that the functor of \({\RO}\)-graded \(R\)-homology Mackey functors is strong symmetric monoidal.

\begin{corollary}
If \(E, E'\in\Sp^{G}_{R,\fpr}\), then the multiplication gives a natural isomorphism
\[
R_{\mstar}(E\wedge E')\cong R_{\mstar}(E)\boxover{R_{\mstar}} R_{\mstar}(E').
\]
\end{corollary}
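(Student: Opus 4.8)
The plan is to read this off the weak K\"unneth theorem, Theorem~\ref{thm:FreeForModules}, by feeding it the $R$-module $M=R\wedge E'$; everything else is bookkeeping with the identification $R_{\mstar}(X)=\m\pi_{\mstar}(R\wedge X)$ and the symmetry of the smash product.

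First I would note that $R\wedge E'$ is canonically an $R$-module for any spectrum $E'$, so Theorem~\ref{thm:FreeForModules}, applied to $E\in\Sp^G_{R,\fpr}$ with $M:=R\wedge E'$, supplies a natural isomorphism
\[
R_{\mstar}(E)\boxover{R_{\mstar}}\m\pi_{\mstar}(R\wedge E')\to \m\pi_{\mstar}\big(E\wedge(R\wedge E')\big)
\]
induced by the multiplication on $R$. Next I would rewrite both sides in terms of $R$-homology. By the very definition of $R_{\mstar}$, namely $R_{\mstar}(X)(T,V)=[M(V),R\wedge X]^{G}=\m\pi_{\mstar}(R\wedge X)(T,V)$, the left-hand tensor factor is exactly $R_{\mstar}(E')$; and the associativity and symmetry of $\wedge$ give $E\wedge(R\wedge E')\simeq R\wedge(E\wedge E')$, so that $\m\pi_{\mstar}(E\wedge R\wedge E')=R_{\mstar}(E\wedge E')$. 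Substituting these identifications into the displayed isomorphism yields the asserted map $R_{\mstar}(E)\boxover{R_{\mstar}}R_{\mstar}(E')\xrightarrow{\ \sim\ }R_{\mstar}(E\wedge E')$.

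Two small points deserve a remark, neither of which is a genuine obstacle. First, all the maps entering the argument — the multiplication, the chosen equivalence $R\wedge M(V_{E})\simeq R\wedge E$ coming from the definition of freeness, and the shuffle $E\wedge R\wedge E'\simeq R\wedge E\wedge E'$ — are maps of $R$-modules, so the resulting isomorphism is an isomorphism of $\RO$-graded $R_{\mstar}$-modules (in fact of Green functors), which is what makes $\boxover{R_{\mstar}}$, the relative tensor product over the commutative Green functor $R_{\mstar}=\m\pi_{\mstar}(R)$, the correct target. Second, although the isomorphism itself uses only $E\in\Sp^{G}_{R,\fpr}$, the hypothesis $E'\in\Sp^{G}_{R,\fpr}$ ensures, via Proposition~\ref{prop:FreeForProducts}, that $E\wedge E'$ is again $R$-free; in particular $R_{\mstar}(E)$, $R_{\mstar}(E')$, and $R_{\mstar}(E\wedge E')$ are all free $R_{\mstar}$-modules, so the box product is the naive, underived one, there is no $\Tor$-correction, and by symmetry of $\boxover{R_{\mstar}}$ one could equally well run the argument with the roles of $E$ and $E'$ interchanged. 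Naturality in both variables is inherited from the naturality of Theorem~\ref{thm:FreeForModules} in $E$ and in $M$.
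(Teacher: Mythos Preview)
Your proof is correct and is exactly the intended argument: the paper states this as an immediate corollary of Theorem~\ref{thm:FreeForModules} with no separate proof, and your application of that theorem to the $R$-module $M=R\wedge E'$ is precisely how one reads it off. Your observation that only $E\in\Sp^{G}_{R,\fpr}$ is actually needed for the isomorphism is also correct.
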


%\begin{corollary}
%Let \(E\) and \(E'\) be in \(\Sp^{G}_{R,\fpr}\).
%\begin{enumerate}
%\item If a basis for both can be chosen such that only virtual representations of \(G\) are used, then we have an isomorphism
%\[
%\m{R}_{\star}(E\wedge E')\cong \m{R}_{\star}(E)\boxover{\m{R}_{\star}} \mR_{\star}(E').
%\]
%\item If a basis for both can be chosen such that only trivial representation of \(G\) are used, then we have an isomorphism
%\[
%\m{R}_{\ast}(E\wedge E')\cong \m{R}_{\ast}(E)\boxover{\m{R}_{\ast}} \mR_{\ast}(E').
%\]
%\end{enumerate} 
%\end{corollary}
%
%The last formulation is also closest to the classical one; since we are only considering trivial representation sphere as the indexing set for the homotopy Mackey functors, we have the usual Koszul sign rule. In the \(\RO(G)\)-graded cases, we may have more exotic signs for swapping elements.

\subsubsection{Closure under norms}

For the norms, we recall some properties of the norm and these relatively simple Thom spectra.

\begin{notation}
If \(T\) is an \(H\)-set and \(V\to T\) is a virtual vector bundle, then let
\[
\Map^{H}\!(G,V)\to\Map^{H}\!(G,T)
\]
be the coinduced vector bundle over \(\Map^{H}\!(G,T)\).
\end{notation}

\begin{proposition}\label{prop:NormsofThom}
For any virtual vector bundle \(V\), we have 
\[
M\big(\Map^{H}\!(G,V)\big)\simeq N_{H}^{G}M(V).
\]
\end{proposition}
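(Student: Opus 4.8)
The plan is to reduce to an honest vector bundle over a finite \(H\)-set and there to recognize both sides as the suspension spectrum of one and the same pointed \(G\)-space, namely the space-level norm of the Thom space of \(V\). First I would collect the inputs from \cite{HHR}: \(N_{H}^{G}\colon \Sp^{H}\to\Sp^{G}\) is strong symmetric monoidal and unit-preserving; it commutes with \(\Sigma^{\infty}\) through the space-level indexed smash product, so that \(N_{H}^{G}\Sigma^{\infty}A\simeq\Sigma^{\infty}\bigl(\bigwedge_{gH\in G/H}{}^{g}A\bigr)\) for a pointed \(H\)-space \(A\); and, as the case of a bundle over a point, \(N_{H}^{G}S^{W}\simeq S^{\Map^{H}(G,W)}\) for every virtual \(H\)-representation \(W\) (for the negative part, combine this with strong monoidality and \(N_{H}^{G}S^{0}\simeq S^{0}\)). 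I would also record that \(\Map^{H}(G,-)\) is additive on virtual bundles and compatible with pullback: over a point \(\phi\in\Map^{H}(G,T)\) the fibre of \(\Map^{H}(G,V)\) is the product \(\prod_{Hg\in H\backslash G}V_{\phi(Hg)}\), and from this fibrewise description it is immediate that \(\Map^{H}(G,-)\) carries \(V\oplus V'\) over \(T\) to \(\Map^{H}(G,V)\oplus\Map^{H}(G,V')\) over \(\Map^{H}(G,T)\), and a trivial bundle \(T\times U\) to the trivial bundle \(\Map^{H}(G,T)\times\Map^{H}(G,U)\).

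Next I would strip off the virtual part. Over a finite \(H\)-set \(T\), every virtual bundle may be written as \(A-\underline{U}\) with \(A\to T\) an honest bundle and \(U\) an honest \(H\)-representation: choose \(W\to T\) with \(V^{-}\oplus W\) isomorphic to a trivial bundle — possible over a finite \(H\)-set, since a large enough multiple of the regular representation receives every fibre — and set \(A=V^{+}\oplus W\). Then \(M(V)\simeq S^{-U}\wedge M(A)\), so by strong monoidality \(N_{H}^{G}M(V)\simeq S^{-\Map^{H}(G,U)}\wedge N_{H}^{G}M(A)\); granting the honest-bundle case for \(A\), the additivity and pullback-compatibility of \(\Map^{H}(G,-)\) then identify the right-hand side with \(M\bigl(\Map^{H}(G,A)-\underline{\Map^{H}(G,U)}\bigr)=M(\Map^{H}(G,V))\). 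For an infinite discrete base I would write \(T\) as the filtered union of its finite \(H\)-subsets and match the indexed-wedge formula for \(N_{H}^{G}\) on the resulting wedge with the disjoint-union decomposition of \(\Map^{H}(G,-)\); alternatively this case can be bypassed, since only finite \(T\) occurs in the sequel.

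It then remains to treat an honest bundle \(V\to T\) over a finite \(H\)-set. Here \(M(V)=\Sigma^{\infty}\mathrm{Th}(V)\), where \(\mathrm{Th}(V)\) is the fibrewise one-point compactification of \(V\) — as \(T\) is discrete, the pointed \(H\)-space \(\bigvee_{t\in T}S^{V_{t}}\) with the evident action — so \(N_{H}^{G}M(V)\simeq\Sigma^{\infty}\bigl(\bigwedge_{gH\in G/H}{}^{g}\mathrm{Th}(V)\bigr)\). It would then suffice to produce a \(G\)-homeomorphism
\[
\bigwedge_{gH\in G/H}{}^{g}\mathrm{Th}(V)\;\cong\;\mathrm{Th}\bigl(\Map^{H}(G,V)\bigr),
\]
generalizing the identity \(\bigwedge_{gH}{}^{g}S^{U}\cong S^{\Map^{H}(G,U)}\) from a point to an arbitrary finite base. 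A non-basepoint of the left-hand side is a tuple \((t_{g},v_{g})_{gH}\) with \(v_{g}\) in the fibre of \(V\) over \(t_{g}\), collapsed to the basepoint as soon as one coordinate reaches \(\infty\); sending such a tuple to the pair consisting of the point \(\phi\in\Map^{H}(G,T)\) it determines and the resulting point of the fibre of \(\Map^{H}(G,V)\) over \(\phi\), one-point-compactified, is the comparison map, and I would check that it is a homeomorphism carrying the permute-and-twist \(G\)-action on the indexed smash product to the coinduced-bundle \(G\)-action. Applying \(\Sigma^{\infty}\) would then finish the honest case, and hence the proposition.

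The hard part will be bookkeeping rather than anything conceptual: making the last homeomorphism genuinely \(G\)-equivariant — reconciling the left-coset convention for \(N_{H}^{G}\) with the right-coset description of \(\Map^{H}(G,-)\), and tracking the twists by elements of \(H\) in each fibre — and, in the virtual step, verifying that the chosen stabilisation \(\underline{U}\) is carried by \(\Map^{H}(G,-)\) to exactly the permutation-twisted representation that cancels the desuspension produced by \(N_{H}^{G}S^{-U}\). None of this is deep, but it is where all the care is required.
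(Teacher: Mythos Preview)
Your proposal is correct. The paper's proof is much terser: after the same reduction to finite \(T\) via filtered colimits, it simply invokes the distributive law for norms from \cite{HHR} to reduce to the single-orbit case \(T=H/H\), where the statement is the standard identity \(N_{H}^{G}S^{V}\simeq S^{\ind_{H}^{G}V}\). Your explicit homeomorphism \(\bigwedge_{gH}{}^{g}\mathrm{Th}(V)\cong\mathrm{Th}\bigl(\Map^{H}(G,V)\bigr)\) is precisely an unpacking of that distributive law in the special case of a Thom space over a discrete base, and your separate ``strip off the virtual part'' step is absorbed in the paper's version by working with virtual \(V\) throughout (the formula \(N_{H}^{G}S^{V}\simeq S^{\ind_{H}^{G}V}\) already holds for virtual \(V\)). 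What you gain is a self-contained argument that does not lean on the distributive law as a black box; what the paper gains is brevity, since the coset-convention and \(H\)-twist bookkeeping you rightly flag at the end is exactly what the distributive law has already packaged once and for all.
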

\begin{proof}
All of the functors considered commute with filtered colimits, so it suffices to consider the case that \(T\) is finite. This is then the distributive law for norms, together with the observation that if \(T=H/K\), then \(M(V)=H_+\smashover{K}S^V\), allows us to further reduce to the case that \(T=H/H\). A vector bundle over this is just a virtual \(H\)-representation, the Thom spectrum of which is the corresponding representation sphere. The coinduced space in this case is \(G/G\), and the representation is \(\ind_H^G V\). We now compute
\[
M\big(\Map^{H}\!(G,V)\big)=S^{\ind_{H}^{G}V}\simeq N_{H}^{G}S^{V}\cong N_{H}^{G}M(V).\qedhere
\]
\end{proof}

The norm is also a strong symmetric monoidal functor, and hence it induces a map
\[
N_{H}^{G}\colon \RMod\to N_{H}^{G}\RMod.
\]
\begin{proposition}
The norm induces a functor
\[
N_{H}^{G}\colon \Sp^{H}_{R,\fpr}\to \Sp^{G}_{N_{H}^{G}R,\fpr}.
\]
\end{proposition}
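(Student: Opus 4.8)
The plan is to run the same ``transport a defining equivalence along a functor'' argument used for Propositions~\ref{prop:FreeForSums}, \ref{prop:FreeforBaseChange}, and~\ref{prop:FreeForProducts}, now taking the functor to be the module-level norm
\[
N_{H}^{G}\colon \RMod\to N_{H}^{G}R\mhyphen\Mod
\]
recorded just above. The only new inputs are Proposition~\ref{prop:NormsofThom}, which identifies \(N_{H}^{G}M(V)\) with the Thom spectrum \(M(\Map^{H}(G,V))\) of a virtual bundle over the \(G\)-set \(\Map^{H}(G,T)\), and the fact that the norm is strong symmetric monoidal, so that \(N_{H}^{G}(R\wedge E)\simeq N_{H}^{G}R\wedge N_{H}^{G}E\) compatibly with module structures.

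Here are the steps. First, since \(\Sp^{G}_{N_{H}^{G}R,\fpr}\) is a full subcategory of \(\Sp^{G}\), it suffices to check that \(N_{H}^{G}\) carries a spectrum with free (resp.\ projective) \(R\)-homology to one with free (resp.\ projective) \(N_{H}^{G}R\)-homology. Second, suppose \(E\in\Sp^{H}_{R,fr}\), with defining equivalence of \(R\)-modules \(R\wedge E\simeq R\wedge M(V_{E})\) for a virtual bundle \(V_{E}\) over an \(H\)-set \(T_{E}\). Applying the module-level norm, which is a functor into \(N_{H}^{G}R\mhyphen\Mod\) and hence preserves equivalences, yields an equivalence of \(N_{H}^{G}R\)-modules \(N_{H}^{G}(R\wedge E)\simeq N_{H}^{G}(R\wedge M(V_{E}))\). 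Third, strong symmetric monoidality rewrites the source as \(N_{H}^{G}R\wedge N_{H}^{G}E\) and the target as \(N_{H}^{G}R\wedge N_{H}^{G}M(V_{E})\), and Proposition~\ref{prop:NormsofThom} identifies \(N_{H}^{G}M(V_{E})\) with \(M(\Map^{H}(G,V_{E}))\). The upshot is an equivalence of \(N_{H}^{G}R\)-modules
\[
N_{H}^{G}R\wedge N_{H}^{G}E\simeq N_{H}^{G}R\wedge M\big(\Map^{H}(G,V_{E})\big),
\]
and since \(\Map^{H}(G,V_{E})\to\Map^{H}(G,T_{E})\) is a virtual bundle over a \(G\)-set, this is precisely the statement that \(N_{H}^{G}E\) is \(N_{H}^{G}R\)-free. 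For the projective case the same three steps work verbatim, replacing ``equivalence'' with ``retract'' and using that a functor preserves retracts. One also checks that a basis \(\vec{x}\) for \(E\) produces a basis \(N_{H}^{G}\vec{x}\) for \(N_{H}^{G}E\) via the identification \([N_{H}^{G}M(V_{E}),N_{H}^{G}(R\wedge E)]^{G}\cong[M(\Map^{H}(G,V_{E})),N_{H}^{G}R\wedge N_{H}^{G}E]^{G}\); here one must use the ``single Thom spectrum over a \(G\)-set'' formulation of a basis rather than the orbit-by-orbit one, since the norm respects neither orbit decompositions nor wedges.

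The main obstacle is purely bookkeeping about which category each equivalence lives in: one needs to know that the norm genuinely refines to a functor on module categories that is itself strong symmetric monoidal over \(N_{H}^{G}R\), so that applying \(N_{H}^{G}\) to an \(R\)-module equivalence yields an \(N_{H}^{G}R\)-module equivalence and not merely an equivalence of underlying \(G\)-spectra. This compatibility — together with the standing assumption that \(R\) is a commutative ring \(H\)-spectrum, so that \(N_{H}^{G}R\) and this module-level norm are defined — is exactly what the preceding paragraph (and, ultimately, the construction of the symmetric monoidal module categories and of the norm on them) supplies. Granting that, no computation remains.
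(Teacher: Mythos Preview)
Your argument is correct and is essentially the paper's own proof, just spelled out in more detail. The paper's proof is a one-liner: apply the norm to the defining equivalence \(R\wedge E\simeq R\wedge M(V_{E})\) and invoke the strong symmetric monoidality of \(N_{H}^{G}\) together with the identification of \(N_{H}^{G}M(V_{E})\) as a Thom spectrum; you have unpacked exactly these steps and added the explicit remark about the projective case and bases, which the paper leaves implicit.
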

\begin{proof}
It suffices to show this for \(E\) having free \(R\)-homology. In this case, we simply apply the norm to the equivalence
\[
R\wedge E\simeq R\wedge M(V_{E})
\]
for some virtual vector bundle \(V_{E}\) and use Proposition~\ref{prop:FreeForProducts}.
\end{proof}

If \(R\) is an \(E_{\infty}\) ring spectrum that has an \(E_{\infty}\)-map
\[
N_{H}^{G}i_{H}^{\ast}R\to R,
\]
then we have a relative norm map on \(R\)-modules given by
\[
M\mapsto R\smashover{N_{H}^{G}i_{H}^{\ast}R} N_{H}^{G}M.
\]
The usual case is when \(R\) is an equivariant commutative ring spectrum (i.e. a \(G\)-\(E_{\infty}\) ring spectrum), but this has also been worked out for algebras over linear isometries operads \cite{BHModules}.

\begin{proposition}
Let \(R\) be an \(E_{\infty}\) ring spectrum that has an \(E_{\infty}\)-map
\[
N_{H}^{G}i_{H}^{\ast}R\to R,
\]
then \(N_{H}^{G}\) induces a functor
\[
\Sp^{H}_{i_{H}^{\ast}R,\fpr}\to\Sp^{G}_{R,\fpr}.
\]
The norm of a basis for \(E\) gives one for the norm.
\end{proposition}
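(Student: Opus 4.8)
The plan is to reduce the statement to the already-proved absolute case of the norm (the preceding Proposition, which says $N_H^G$ carries $\Sp^H_{i_H^\ast R,\fpr}$ into $\Sp^G_{N_H^G i_H^\ast R,\fpr}$), and then push forward along the counit $N_H^G i_H^\ast R\to R$ using Proposition~\ref{prop:FreeforBaseChange}. So I would first note that it suffices to treat $E$ with free $i_H^\ast R$-homology, since both $\Sp^{-}_{-,\fpr}$ are defined as full subcategories closed under retracts and every functor preserves retracts; and that the relative norm $M\mapsto R\smashover{N_H^G i_H^\ast R} N_H^G M$ on $R$-modules is, on the level of homotopy categories, exactly the composite of the absolute norm $N_H^G\colon i_H^\ast R\mhyphen\Mod\to N_H^G i_H^\ast R\mhyphen\Mod$ followed by extension of scalars along the $E_\infty$-map $f\colon N_H^G i_H^\ast R\to R$.

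Next I would run the computation. Given $E$ with a chosen basis, there is an equivalence of $i_H^\ast R$-modules $i_H^\ast R\wedge E\simeq i_H^\ast R\wedge M(V_E)$ for a virtual bundle $V_E$ over an $H$-set $T_E$. Applying $N_H^G$ and using that the norm is strong symmetric monoidal (so it commutes with smashing with the sphere in the appropriate sense) together with Proposition~\ref{prop:NormsofThom}, which identifies $N_H^G M(V_E)\simeq M(\Map^H(G,V_E))$ as a Thom spectrum of a virtual bundle over the $G$-set $\Map^H(G,T_E)$, I get an equivalence of $N_H^G i_H^\ast R$-modules
\[
N_H^G i_H^\ast R\wedge N_H^G E\simeq N_H^G i_H^\ast R\wedge M\big(\Map^H(G,V_E)\big).
\]
Here I should be slightly careful that $N_H^G(i_H^\ast R\wedge E)$ is naturally $N_H^G i_H^\ast R\wedge N_H^G E$, which follows from $N_H^G$ being strong symmetric monoidal. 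Now extend scalars along $f$: by Proposition~\ref{prop:FreeforBaseChange} applied to the $E_\infty$-map $f$, the $R$-module $R\smashover{N_H^G i_H^\ast R} N_H^G E$ is equivalent to $R\smashover{N_H^G i_H^\ast R}\big(N_H^G i_H^\ast R\wedge M(\Map^H(G,V_E))\big)\simeq R\wedge M(\Map^H(G,V_E))$, exhibiting the relative norm of $E$ as $R$-free with underlying $G$-set $\Map^H(G,T_E)$ and bundle $\Map^H(G,V_E)$. For the basis claim, a basis $\vec{x}\colon M(V_E)\to i_H^\ast R\wedge E$ is sent by $N_H^G$ to $N_H^G\vec{x}\colon M(\Map^H(G,V_E))\to N_H^G i_H^\ast R\wedge N_H^G E$, and $f_\ast$ of this, as in Proposition~\ref{prop:FreeforBaseChange}, is the desired basis for the relative norm; the normed basis being the ``norm of a basis'' is then just unwinding these identifications.

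The main obstacle I anticipate is purely bookkeeping rather than conceptual: keeping straight which ring a given module or basis element is linear over as one passes through $i_H^\ast$, $N_H^G$, and then extension of scalars along $f$, and making sure the identification $N_H^G(i_H^\ast R\wedge E)\simeq N_H^G i_H^\ast R\wedge N_H^G E$ and Proposition~\ref{prop:NormsofThom} are being applied to $i_H^\ast R$-modules in a way compatible with the monoidal structures on the three module categories involved. Since the relative norm on modules and the requisite strong symmetric monoidality have already been set up (citing \cite{BHModules}), no genuinely new input is needed, and the argument is a short diagram chase once the functors are lined up correctly.
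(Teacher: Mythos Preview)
Your argument is correct and is exactly the intended one: factor the claim as the previous proposition (the absolute norm sends \(i_H^\ast R\)-free spectra to \(N_H^G i_H^\ast R\)-free spectra) followed by Proposition~\ref{prop:FreeforBaseChange} along the \(E_\infty\)-map \(N_H^G i_H^\ast R\to R\). The paper in fact leaves this proposition without an explicit proof, treating it as an immediate consequence of those two ingredients, so your write-up supplies precisely what is implicit there.

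One small notational point to clean up: the functor in the statement is the ordinary norm \(N_H^G\colon\Sp^H\to\Sp^G\) restricted to these full subcategories, not the relative norm on modules, so the expression \(R\smashover{N_H^G i_H^\ast R} N_H^G E\) does not literally parse (since \(N_H^G E\) is a spectrum, not an \(N_H^G i_H^\ast R\)-module). What you want there is simply \(R\wedge N_H^G E\simeq R\smashover{N_H^G i_H^\ast R}\big(N_H^G i_H^\ast R\wedge N_H^G E\big)\), after which your chain of equivalences goes through unchanged. The detour through the relative norm on modules is unnecessary for the statement as written, though of course it is the same computation.
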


\begin{example}\label{exam:NormsOfMU}
Since 
\[
\MU\wedge\MU\simeq\MU\wedge BU_{+}\simeq\MU[b_{1},\dots],
\]
where \(|b_{i}|=2i\), the spectrum \(\MU\) and the space \(BU\) have free \(\MU\)-homology. This implies that the same is true for the norms:
\(
N_{e}^{G}\MU
\) and \(N_{e}^{G}\Sigma^{\infty}_{+}BU\) have free \(N_{e}^{G}\MU\)-homology.

We have identical statements for \(\mathbb CP^{n}\) for all \(n\leq \infty\) and the spaces \(BU(n)\).
\end{example}

Using the orientations given by the norm of \(\MU\), we produce a host of other interesting examples.

\begin{example}
Let \(R\) be an \(E_{\infty}\) \(G\)-spectrum that admits an \(E_{\infty}\) norm map
\[
N_{e}^{G}i_{e}^{\ast}R\to R,
\]
and assume that \(i_{e}^{\ast}R\) can be given a commutative complex orientation. Then for any spectrum \(E\) such that \(\MU_{\ast}E\) is a free \(\MU_{\ast}\)-module, \(N_{e}^{G}(E)\) has free \(R\)-homology.

The identity map \(\MU\to i_{e}^{\ast}\MU_{G}\) and the Connor--Floyd map \(\MU\to KU\) give examples of commutative complex orientations, which shows that the spaces and spectra considered in Example~\ref{exam:NormsOfMU} have free \(\MU_{G}\) and \(KU_{G}\)-homology.
\end{example}

\begin{example}
If \(E\) is any finite type, bounded below spectrum with free integral homology, then \(N_{e}^{G}E\) has free \(KU_{G}\) and \(\MU_{G}\) homology.
\end{example}

In the Bredon case, if \(\mR\) has the structure of a Tambara functor \cite{Tambara}, then Ullman has shown that \(H\mR\) has the structure of a \(G\)-\(E_{\infty}\) ring spectrum \cite{Ullman}. This gives us many examples for Bredon homology. In particular, the absolute norms (i.e. the norms from the trivial group) of an ordinary commutative ring are always Tambara functors. Generalizing the \(C_{2}\)-equivariant examples of \cite{SignedLoops}, we get that absolute norms are free for a large number of Green functors.

\begin{example}
Let \(k\) be a field and let \(\mR\) be a Green functor under \(N_{e}^{G}k\). Then for any spectrum \(E\), \(N_{e}^{G}E\) has free \(H\mR\)-homology.
\end{example}

The more general integral story also follows.

\begin{example}
If \(E\) is an ordinary, non-equivariant spectrum such that \(H_{\ast}(E;\mathbb Z)\) is free, then \(N_{e}^{G}E\) has free \(\mA\)-homology. Since \(H\mM\) is an \(H\mA\)-module for any Mackey functor \(\mM\), \(N_{e}^{G}E\) has free \(\mM\)-homology for any \(\mM\).
\end{example}

There is a norm in \(R\)-homology, specified by the norms in Mackey functors (or equivalently in spectra), and the following holds by definition.

\begin{corollary}
If \(E\) is an \(H\)-spectrum that has free \(i_{H}^{\ast}R\)-homology, then we have a natural isomorphism
\[
R_{\mstar}(N_{H}^{G}E)\cong N_{H}^{G}\big(i_{H}^{\ast}R_{\mstar}(E)\big).
\]
\end{corollary}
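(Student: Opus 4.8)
The statement is formal once one knows that \(R_{\mstar}(-)\), restricted to \(R\)-free spectra, intertwines the topological norm on spectra with the algebraic norm on \(\RO\)-graded modules; here is how I would lay it out. Throughout, \(R\) carries the \(E_{\infty}\)-map \(N_{H}^{G}i_{H}^{\ast}R\to R\) of the preceding propositions, so that there is a relative norm on \(R_{\mstar}\)-modules and the norm in \(R\)-homology is the one it induces. Fix a \(G\)-set \(T_{E}\), a virtual bundle \(V_{E}\to T_{E}\), and a basis \(\vec{x}\in i_{H}^{\ast}R_{\mstar}(E)(T_{E},V_{E})\), so \(i_{H}^{\ast}R\wedge\vec{x}\colon i_{H}^{\ast}R\wedge M(V_{E})\to i_{H}^{\ast}R\wedge E\) is an equivalence. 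Passing to \(\RO\)-graded homotopy, \(\vec{x}\) identifies \(i_{H}^{\ast}R_{\mstar}(E)\) with the free \(i_{H}^{\ast}R_{\mstar}\)-module \(i_{H}^{\ast}R_{\mstar}\big(M(V_{E})\big)\), that is, with the product over \(t\in T_{E}/H\) of the modules corepresenting \(\pi_{V_{E,t}}^{H_{t}}(-)\).

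Next I would apply the (relative) norm \(N_{H}^{G}\) on \(R_{\mstar}\)-modules. It is strong symmetric monoidal, and by its exponential (distributive) law it sends the free module indexed by the \(H\)-set \(T_{E}\) twisted by \(V_{E}\) to the free \(R_{\mstar}\)-module indexed by the coinduced \(G\)-set \(\Map^{H}(G,T_{E})\) twisted by \(\Map^{H}(G,V_{E})\). This is exactly \(R_{\mstar}\big(N_{H}^{G}M(V_{E})\big)\), which by Proposition~\ref{prop:NormsofThom} is \(R_{\mstar}\big(M(\Map^{H}(G,V_{E}))\big)\). Applying \(N_{H}^{G}\) to the identification of the first paragraph therefore yields an isomorphism \(N_{H}^{G}\big(i_{H}^{\ast}R_{\mstar}(E)\big)\cong R_{\mstar}\big(M(\Map^{H}(G,V_{E}))\big)\).

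It then remains to identify the target with \(R_{\mstar}(N_{H}^{G}E)\). By the preceding proposition, \(N_{H}^{G}E\) has free \(R\)-homology and \(N_{H}^{G}\vec{x}\) is a basis for it; applying the norm to the equivalence \(i_{H}^{\ast}R\wedge\vec{x}\) and using Proposition~\ref{prop:NormsofThom} shows that \(R\wedge(N_{H}^{G}\vec{x})\) is an equivalence \(R\wedge M(\Map^{H}(G,V_{E}))\to R\wedge N_{H}^{G}E\). On \(\RO\)-graded homotopy this is the isomorphism \(R_{\mstar}\big(M(\Map^{H}(G,V_{E}))\big)\cong R_{\mstar}(N_{H}^{G}E)\), and composing the two isomorphisms gives \(R_{\mstar}(N_{H}^{G}E)\cong N_{H}^{G}\big(i_{H}^{\ast}R_{\mstar}(E)\big)\). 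Naturality in \(E\) is automatic: the norm in \(R\)-homology is defined by post-composing with the spectrum-level norm, so the comparison is a morphism of \(\RO\)-graded \(R_{\mstar}\)-modules which is an isomorphism on the distinguished basis, hence an isomorphism.

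The one step with genuine content — and the one the phrase ``holds by definition'' is pointing at — is the second paragraph's identification of the algebraic \(N_{H}^{G}\) of a free module with \(R_{\mstar}\) of the topologically normed Thom spectrum; I expect this normalization matching to be the main obstacle to a fully detailed write-up. It rests on the \(R_{\mstar}\)-module analogue of Proposition~\ref{prop:FreeForProducts} together with the Angeltveit--Bohmann/Hill--Bohmann construction of the \(G\)-symmetric monoidal category of \(R_{\mstar}\)-modules: granting that \(R\wedge(-)\), and hence \(R_{\mstar}(-)\), intertwines the two norms, the exponential law matches Proposition~\ref{prop:NormsofThom} summand by summand and the remaining chase is the formal manipulation above.
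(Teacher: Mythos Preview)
Your proposal is correct and is essentially an unpacking of what the paper asserts. The paper gives no proof beyond the sentence preceding the corollary (``There is a norm in \(R\)-homology, specified by the norms in Mackey functors (or equivalently in spectra), and the following holds by definition''), and your argument is exactly the verification that this definition does what is claimed: choose a basis, apply the norm to the equivalence \(i_{H}^{\ast}R\wedge M(V_{E})\simeq i_{H}^{\ast}R\wedge E\), invoke Proposition~\ref{prop:NormsofThom}, and read off \(\RO\)-graded homotopy.
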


\begin{notation}
In any context where it is defined, let \(N^{G/H}\) be the composite \(N_{H}^{G}i_{H}^{\ast}\).
\end{notation}

With this notation, if \(E\) is a \(G\)-spectrum with free \(R\)-homology, then we have a natural isomorphism
\[
R_{\mstar}(N^{G/H}E)\cong N^{G/H} R_{\mstar}E.
\]

As a specific example, this gives us the equivariant homology of the topological Singer construction \cite{LunRog}.

\begin{example}\label{exam:TopSinger}
Let \(k=\mathbb F_{p}\), let \(G=C_{p}\), and let \(\m\F_{p}\) be the constant Green functor \(\F_{p}\). This is a Tambara functor, so for any spectrum \(E\), we have a natural isomorphism
\[
H\big(N_{e}^{C_{p}}(E);\m\F_{p}\big)\cong N_{e}^{C_{p}}\big(H_{\ast}(E;\mathbb F_{p})\big). 
\]
In particular, for \(p=2\), the \(\m{\F}_{2}\)-Bredon homology of \(N_{e}^{C_{2}}H\F_{2}\) is free.

Unpacking this a little more, a basis is given by orbits \([f]\) in the monomial basis in
\[
\Big(\mathbb F_{p}[\xi_{1},\dots]\otimes E(\tau_{0},\dots)\Big)^{\otimes p}\!\Big/C_{p},
\]
where the group \(C_{p}\) acts on this by permuting the tensor factors. Every monic monomial \(f\) has a stabilizer subgroup \(H_{f}\). This is the subgroup associated to the orbit \([f]\) as a basis vector. The degree of \(f\) is given by 
\[
||f||=\frac{|f|}{|H_{f}|}\rho_{H_{f}},
\]
where \(|f|\) is the ordinary, underlying degree induced by the degrees in the dual Steenrod algebra.
\end{example}

These freeness results can also give us interesting information about non-free spectra. Snaith showed that we have an equivalence of \(E_{\infty}\) ring spectra
\[
KU\simeq \Sigma^{\infty}_{+}\mathbb CP^{\infty}[\beta^{-1}],
\]
where \(\beta\) is the map on \(\Sigma^{\infty}_{+}\mathbb CP^{\infty}\) induced by the inclusion \(\mathbb CP^{1}\hookrightarrow \mathbb CP^{\infty}\) \cite{SnaithKU}. 

The norm functor commutes with filtered colimits, so this gives us an equivariant version of Snaith's theorem.

\begin{theorem}
For any finite group \(G\), we have an equivalence of \(G\)-\(E_{\infty}\) ring spectra
\[
N_{e}^{G}KU\simeq\Sigma^{\infty}_{+}\Map(G,\mathbb CP^{\infty})[N(\beta)^{-1}],
\]
where 
\[
N(\beta)\colon S^{2\rho_{G}}\to \Sigma^{\infty}_{+}\Map(G,\mathbb CP^{\infty})
\]
is induced by the norm. 
\end{theorem}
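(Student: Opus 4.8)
The plan is to obtain the equivalence by applying the norm \(N_e^G\) to the \(E_\infty\)-refinement of Snaith's equivalence \(KU\simeq \Sigma^\infty_+\mathbb{CP}^\infty[\beta^{-1}]\) recorded above, and then to identify the result by unwinding three standard properties of the norm: its effect on suspension spectra of spaces, its effect on even spheres, and its interaction with localization at a homotopy class. Since \(N_e^G\) is strong symmetric monoidal it sends \(E_\infty\)-ring spectra to \(G\)-\(E_\infty\)-ring spectra, so \(N_e^G KU\) is a \(G\)-\(E_\infty\)-ring and \(N_e^G KU\simeq N_e^G\big(\Sigma^\infty_+\mathbb{CP}^\infty[\beta^{-1}]\big)\) as such.

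First I would recall the space-level analogue of Proposition~\ref{prop:NormsofThom}: for any space \(X\) the norm of the suspension spectrum is the suspension spectrum of the coinduced space, \(N_e^G\Sigma^\infty_+ X\simeq \Sigma^\infty_+\Map(G,X)\), where \(\Map(G,X)=X^{\times G}\) carries the permutation action (see \cite{HHR}). In particular \(N_e^G\Sigma^\infty_+\mathbb{CP}^\infty\simeq \Sigma^\infty_+\Map(G,\mathbb{CP}^\infty)\), and the latter inherits its \(G\)-\(E_\infty\)-ring structure as the norm of the \(E_\infty\)-ring \(\Sigma^\infty_+\mathbb{CP}^\infty=\Sigma^\infty_+ BU(1)\). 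Likewise \(N_e^G S^{1}\simeq S^{\rho_G}\), hence \(N_e^G S^{-2k}\simeq S^{-2k\rho_G}\), and applying \(N_e^G\) to \(\beta\colon S^2\to \Sigma^\infty_+\mathbb{CP}^\infty\) produces, under these identifications, precisely the map \(N(\beta)\colon S^{2\rho_G}\to \Sigma^\infty_+\Map(G,\mathbb{CP}^\infty)\) of the statement.

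Next I would treat the localization. The underlying spectrum of \(\Sigma^\infty_+\mathbb{CP}^\infty[\beta^{-1}]\) is the mapping telescope \(\operatorname{colim}_k \Sigma^{-2k}\Sigma^\infty_+\mathbb{CP}^\infty\) with transition maps given by multiplication by \(\beta\). Because the norm commutes with filtered colimits (as recalled just before the statement) and, being strong symmetric monoidal, carries the multiplication-by-\(\beta\) map to the multiplication-by-\(N(\beta)\) map on \(N_e^G\Sigma^\infty_+\mathbb{CP}^\infty\), applying \(N_e^G\) shows that the underlying \(G\)-spectrum of \(N_e^G\big(\Sigma^\infty_+\mathbb{CP}^\infty[\beta^{-1}]\big)\) is the telescope \(\operatorname{colim}_k \Sigma^{-2k\rho_G}\Sigma^\infty_+\Map(G,\mathbb{CP}^\infty)\) along \(N(\beta)\) — exactly the telescope computing \(\Sigma^\infty_+\Map(G,\mathbb{CP}^\infty)[N(\beta)^{-1}]\). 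Since \(N(\beta)\) is a unit in the \(G\)-\(E_\infty\)-ring \(N_e^G\big(\Sigma^\infty_+\mathbb{CP}^\infty[\beta^{-1}]\big)\) (images of units under a symmetric monoidal functor are units) and the structure map from \(N_e^G\Sigma^\infty_+\mathbb{CP}^\infty\) realizes this telescopic localization on underlying \(G\)-spectra, the universal property of localization identifies the two as \(G\)-\(E_\infty\)-rings, completing the proof.

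The main obstacle is the bookkeeping in the last step: one must check that ``localization of an \(E_\infty\)-ring at a homotopy class'' is computed by the mapping telescope both non-equivariantly and \(G\)-equivariantly, so that the norm — which preserves only filtered, not arbitrary, colimits — genuinely commutes with the formation of \(\Sigma^\infty_+\mathbb{CP}^\infty[\beta^{-1}]\), and that the \(G\)-\(E_\infty\)-structure obtained in this way is the expected localized one rather than merely an abstract equivalence of underlying \(G\)-spectra. This is exactly where the usual caveat about norms and localizations is relevant; it is harmless here only because inverting a single homotopy class is a telescopic operation, and I would spell this out rather than assert it.
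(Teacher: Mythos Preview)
Your argument is correct and is precisely the paper's approach: the paper's entire proof is the single sentence ``The norm functor commutes with filtered colimits, so this gives us an equivariant version of Snaith's theorem,'' and you have simply unpacked that sentence in detail, supplying the identifications \(N_e^G\Sigma^\infty_+\mathbb{CP}^\infty\simeq\Sigma^\infty_+\Map(G,\mathbb{CP}^\infty)\) and \(N_e^G S^2\simeq S^{2\rho_G}\) and the telescope description of the localization. Your caveat about checking that the \(G\)-\(E_\infty\)-structure is the expected one is more careful than the paper itself, which does not address it.
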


\begin{corollary}
Let \(R\) be a \(G\)-\(E_{\infty}\) ring spectrum such that \(\mathbb CP^{\infty}\) has free \(i_{e}^{\ast}R\)-homology. Then we have an isomorphism
\[
R_{\mstar} N_{e}^{G}KU\simeq \lim_{\longrightarrow} \Sigma^{-2n\rho_{G}} N_{e}^{G}\big(i_{e}^{\ast}R_{\ast}(\mathbb CP^{\infty})\big).
\]
In particular, this is always a flat \(R_{\mstar}\)-module.
\end{corollary}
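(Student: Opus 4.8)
The plan is to bootstrap off the equivariant Snaith theorem stated just above, using that both the norm and \(R\)-homology commute with filtered colimits. First I would rewrite \(N_{e}^{G}KU\) as a sequential colimit. Since the norm commutes with filtered colimits, the equivalence \(N_{e}^{G}KU\simeq\Sigma^{\infty}_{+}\Map(G,\mathbb{CP}^{\infty})[N(\beta)^{-1}]\) presents \(N_{e}^{G}KU\) as the colimit of
\[
\Sigma^{\infty}_{+}\Map(G,\mathbb{CP}^{\infty})\xrightarrow{\ \cdot N(\beta)\ }\Sigma^{-2\rho_{G}}\Sigma^{\infty}_{+}\Map(G,\mathbb{CP}^{\infty})\xrightarrow{\ \cdot N(\beta)\ }\cdots,
\]
where the transition maps are multiplication by \(N(\beta)\) and the \(n\)-th stage is desuspended by \(2n\rho_{G}\) precisely because \(N(\beta)\) has source \(S^{2\rho_{G}}\) (equivalently \(N_{e}^{G}S^{2}\simeq S^{2\rho_{G}}\)). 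I also use the identification \(\Sigma^{\infty}_{+}\Map(G,\mathbb{CP}^{\infty})\simeq N_{e}^{G}\Sigma^{\infty}_{+}\mathbb{CP}^{\infty}\).

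Second, I would compute the \(R\)-homology of a single stage. By hypothesis \(\mathbb{CP}^{\infty}\) has free \(i_{e}^{\ast}R\)-homology, and since \(R\) is a \(G\)-\(E_{\infty}\)-ring spectrum it carries an \(E_{\infty}\)-norm map \(N_{e}^{G}i_{e}^{\ast}R\to R\); hence the proposition above on norms of free spectra (and of bases) shows \(N_{e}^{G}\Sigma^{\infty}_{+}\mathbb{CP}^{\infty}\) has free \(R\)-homology, and the corollary identifying \(R_{\mstar}\) of a norm gives
\[
R_{\mstar}\big(N_{e}^{G}\Sigma^{\infty}_{+}\mathbb{CP}^{\infty}\big)\cong N_{e}^{G}\big(i_{e}^{\ast}R_{\ast}(\mathbb{CP}^{\infty})\big).
\]
In particular this is a free \(R_{\mstar}\)-module, and each of its desuspensions \(\Sigma^{-2n\rho_{G}}(-)\) is again free, hence flat.

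Third, I would pass to the colimit. Both \(R\wedge(-)\) and \(\mpi_{\mstar}(-)\) commute with filtered colimits, so applying \(R_{\mstar}(-)\) to the colimit presentation of \(N_{e}^{G}KU\) yields
\[
R_{\mstar}N_{e}^{G}KU\cong\lim_{\longrightarrow}\Sigma^{-2n\rho_{G}}R_{\mstar}\big(N_{e}^{G}\Sigma^{\infty}_{+}\mathbb{CP}^{\infty}\big)\cong\lim_{\longrightarrow}\Sigma^{-2n\rho_{G}}N_{e}^{G}\big(i_{e}^{\ast}R_{\ast}(\mathbb{CP}^{\infty})\big),
\]
which is the claimed formula. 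For the flatness statement, each term of this filtered system is flat over \(R_{\mstar}\) and a filtered colimit of flat modules in the box-product symmetric monoidal category of \(R_{\mstar}\)-modules is flat.

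All the ingredients are formal; the one place that wants care is the \(\RO(G)\)-graded bookkeeping, i.e.\ verifying that the \(n\)-th desuspension is by \(2n\rho_{G}\) and not by \(2n\) copies of some other representation, which reduces to \(N_{e}^{G}S^{2}\simeq S^{2\rho_{G}}\), together with checking that flatness of \(R_{\mstar}\)-modules is stable under filtered colimits in the relevant symmetric monoidal sense. Neither is difficult, but both should be recorded explicitly rather than waved through.
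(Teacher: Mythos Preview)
Your argument is correct and is precisely the intended one: the paper states this as a corollary without proof, and the proof you give---rewrite \(N_{e}^{G}KU\) via the equivariant Snaith theorem as a sequential colimit along \(\cdot N(\beta)\), identify the \(R\)-homology of each stage using the norm results for free spectra, then pass to the colimit---is exactly what the surrounding results are set up to deliver. Your bookkeeping on the desuspension degree and on flatness under filtered colimits is also right.
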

\begin{proof}
Both the norm and \(R\)-homology commute with direct limits. This follows from the homology of the {\coinduction}.
\end{proof}

\begin{example}
%Generalizing the classical result about the integral homology of \(KU\), we have for any finite \(G\), 
%\(
%H\mA\wedge KU_{G}
%\)
%is rational. 
%
Because \(H\mA\) is a \(G\)-\(E_{\infty}\)-commutative ring spectrum and \(\mathbb CP^{\infty}\) has free \(H\Z\)-homology, we have
\[
H_{\mstar}\big(\Map(G,\mathbb CP^{\infty});\mA\big)\cong N_{e}^{G} H_{\ast}(\mathbb CP^{\infty};\mathbb Z)\cong N_{e}^{G}\big(\Gamma(x)\big),
\]
where \(|x|=2\). The Bott element we invert is the norm of \(x\), and we deduce 
\[
H_{\mstar} N_{e}^{G}KU\simeq N_{e}^{G} \mathbb Q[x^{\pm 1}].
\]

Since \(KU_{G}\) is a \(N_{e}^{G}KU\)-algebra, we also deduce that \(H\mA\wedge KU_{G}\) is rational.
\end{example}

\begin{remark}
The norms of the divided power algebra are curious Tambara functors, though the structure can be worked out from the basic properties of the norms. We spell this out for \(C_{p}\). 

Just as with the discussion of the topological Singer construction in Example~\ref{exam:TopSinger}, a basis is given by orbits of monic monomials in the tensor power \(\Gamma(x)^{\otimes p}\), where \(C_{p}\) again acts by permutation. The stabilizer of a monic monomial again gives us the appropriate way to determine the degrees: most monomials are stabilized by \(\{e\}\) and hence correspond to a free summand, while monomials of the form \(f^{\otimes p}\) correspond to a summand \(S^{|f|\rho_{p}}\). These are the norms of classes \(f\otimes 1^{\otimes (p-1)}\).

Let \(\gamma_{i}(x)\) be the \(i\)th divided power of \(x\). Then we have the divided power relations
\[
\gamma_{i}x\cdot\gamma_{j}x=\binom{i+j}{i}\gamma_{i+j}x.
\]
Since the norm is multiplicative, this gives relations
\[
N(\gamma_{i}x)\cdot N(\gamma_{j}x)=N\binom{i+j}{i}N(\gamma_{i+j}x).
\]
The norms of integers can be computed back in the Burnside Mackey functor, where we find
\[
N_{e}^{C_{p}}(n)=n+\frac{n^{p}-n}{p}[C_{p}],
\]
and this gives us the actual relation:
\[
N(\gamma_{i}x)\cdot N(\gamma_{j}x)=\binom{i+j}{i}N(\gamma_{i+j}x)+\frac{\left(\binom{i+j}{i}^{p}-\binom{i+j}{i}\right)}{p} \tr_{e}^{C_{p}}\big(\gamma_{i+j}(x)^{\otimes p}\big).
\]
\end{remark}

\subsubsection{Closure under duals}
We also have a weak Universal Coefficients Theorem, provided our spectrum is small.

\begin{definition}
Let \(E\in\Sp^{G}_{R,\fpr}\), and let \(V_{E}\) be the associated virtual bundle such that \(R\wedge E\simeq R\wedge M(V_{E})\). We say \(E\) is {\defemph{finite type}} if for each \(k\leq j\in\mathbb Z\), only finitely many orbits of \(T_{E}\) contribute to \(\m\pi_{\ell}(R\wedge M(V_{E}))\) for \(k\leq \ell\leq j\). 
\end{definition}

Clearly, if the set \(T_{E}\) can be chosen to be finite, then it is finite type. This more general condition is analogous to only having finitely many cells in each degree.

\begin{theorem}
If \(E\in\Sp^{G}_{R,\fpr}\) is a finite complex, then \(D(X)\) is also in \(\Sp^{G}_{R,\fpr}\).

More generally, if \(E\in\Sp^{G}_{R,\fpr}\) is finite type, then for any \(R\)-module \(M\), we have a weak equivalence of \(R\)-modules
\[
F(E,M)\simeq M\wedge M(-V_{E}).
\]

We have a universal coefficients isomorphism that computes the \(M\)-cohomology of \(E\) out of the \(R\)-homology of \(E\):
\[
M^{-\m{\star}}(E)\cong \Hom_{\m{R}_{\m{\star}}}\!\!\big(R_{\m{\star}}(E),M_{\m{\star}}\big).
\] 
\end{theorem}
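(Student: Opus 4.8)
The plan is to reduce everything to the free case and then to a single computation—the Spanier--Whitehead dual of an induced representation sphere—after which the rest is base change. Since $D(-)=F(-,\mathbb S)$, and more generally $F(-,M)$, are contravariant functors preserving retracts, and since $E$ has projective $R$-homology exactly when $R\wedge E$ is a retract of some $R\wedge M(V)$, it suffices throughout to treat $E$ with \emph{free} $R$-homology; the projective case follows by applying $D$ or $F(-,M)$ to a retract and keeping ``equivalence'' replaced by ``is a summand of''. The computation I need is that for a \emph{finite} group $G$ there is a natural equivalence $D\big(G_{+}\smashover{H}S^{V}\big)\simeq G_{+}\smashover{H}S^{-V}$: the orbit $\Sigma^{\infty}(G/H)_{+}$ is self-dual, the Wirthm\"uller isomorphism identifies induction and coinduction along $H\hookrightarrow G$ with \emph{no} representation shift, and $S^{V}$ is $H$-dualizable with dual $S^{-V}$. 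Taking finite wedges gives $D\big(M(V)\big)\simeq M(-V)$ for any virtual bundle over a finite $G$-set.

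For the first assertion, if $E$ is a finite $G$-CW spectrum with free $R$-homology we may take $T_{E}$ finite, so $E$ is dualizable, and using the base-change identity $F(-,R)\simeq F_{R}(R\wedge-,R)$ for $R$-modules together with the splitting $R\wedge E\simeq R\wedge M(V_{E})$,
\[
R\wedge D(E)\simeq F(E,R)\simeq F_{R}\big(R\wedge M(V_{E}),R\big)\simeq R\wedge D\big(M(V_{E})\big)\simeq R\wedge M(-V_{E}).
\]
Thus $D(E)$ is $R$-free with basis bundle $-V_{E}$ over $T_{E}$.

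For the finite type case, write $M(V_{E})=\bigvee_{t}G_{+}\smashover{H_{t}}S^{V_{E,t}}$. Using $F(-,M)\simeq F_{R}(R\wedge-,M)$, the splitting, that $F(-,M)$ turns the wedge into a product, and the key computation together with the projection formula,
\[
F(E,M)\simeq \prod_{t}F\big(G_{+}\smashover{H_{t}}S^{V_{E,t}},M\big)\simeq \prod_{t}M\wedge\big(G_{+}\smashover{H_{t}}S^{-V_{E,t}}\big)\simeq M\wedge M(-V_{E}),
\]
the last step using the finite type hypothesis to see that only finitely many orbits contribute in any bounded range of degrees, so that the canonical map from the wedge $M\wedge M(-V_{E})$ to the product is an equivalence. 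Applying $\mpi_{\mstar}$—which takes products to products—and identifying $[M(W),M\wedge(G_{+}\smashover{H_{t}}S^{-V_{E,t}})]^{G}$ with the value of $M_{\mstar}$ at $H_{t}$ in degree $i_{H_{t}}^{\ast}W-V_{E,t}$ via the projection formula and the induction--restriction adjunction, we obtain
\[
M^{-\mstar}(E)\cong \prod_{t}\Hom_{\mR_{\mstar}}\big(\mR_{\mstar}\cdot x_{t},M_{\mstar}\big)\cong \Hom_{\mR_{\mstar}}\big(R_{\mstar}(E),M_{\mstar}\big),
\]
since $R_{\mstar}(E)$ is the free $\mR_{\mstar}$-module on the basis $\{x_{t}\}$ by the corepresentability noted after the definition of $R$-freeness.

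The main obstacle is the passage to the infinite ($T_{E}$ infinite) case: both $R\wedge(-)$ in the duality statement and $F(-,M)$ convert the defining filtered colimit for $M(V_{E})$ into a cofiltered limit, and one must check—this is exactly where the finite type hypothesis, together with implicit boundedness of $R$ (and $M$), enters—that this limit is again a Thom spectrum of a virtual bundle over $T_{E}$, that it smashes correctly with $M$, and that no $\lim^{1}$ terms intervene. By contrast, the finite complex statement is formal once the self-duality $D\big(G_{+}\smashover{H}S^{V}\big)\simeq G_{+}\smashover{H}S^{-V}$ is in hand.
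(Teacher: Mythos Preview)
Your proposal is correct and follows essentially the same approach as the paper: both arguments pass from \(F(E,M)\) to \(F_{R}(R\wedge E,M)\), use the basis to replace \(R\wedge E\) by \(R\wedge M(V_{E})\), invoke the Wirthm\"uller isomorphism \(F(G_{+}\smashover{H}S^{V},M)\simeq G_{+}\smashover{H}S^{-V}\wedge M\) on each orbit, and then appeal to the finite type hypothesis to identify the resulting product with the wedge \(M\wedge M(-V_{E})\). One small point: your aside ``we may take \(T_{E}\) finite'' when \(E\) is a finite complex is neither obvious nor needed---the paper (and, in effect, you) simply use that finiteness of \(E\) gives \(R\wedge D(E)\simeq F(E,R)\) directly, which then falls under the general computation.
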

\begin{proof}
If \(E\) is a finite complex, then
\[
D(E)\wedge R\simeq F(E,R),
\]
and the first will follow from the second.

Since \(M\) is an \(R\)-module, we have an equivalence
\[
F(E,M)\simeq F_{R}(R\wedge E,M).
\]
A basis for the \(R\)-homology of \(E\) gives an equivalence 
\[
F_{R}(R\wedge E,M)\simeq F_{R}\big(R\wedge M(V_{E}),M\big),
\]
and this is equivalent to \(F\big(M(V_{E}),M\big)\). Since maps out of a wedge is the product, we first check the case of an orbit. The result is then the classical Wirthm\"uller isomorphism:
\[
F\big(G_{+}\smashover{H}S^{V},M\big)\simeq G_{+}\smashover{H}S^{-V}\wedge M.
\]
Finally, the finite type condition ensures that the natural map from the wedge to the product is in fact an equivalence.

The second part follows from this by taking homotopy and observing the result for orbits.
\end{proof}

A surprising final feature of the universal coefficients theorem is that we can also describe the cohomology of the norms of \(R\)-free spectra.

\begin{proposition}
Let \(R\) be an \(E_{\infty}\) ring spectrum that has an \(E_{\infty}\)-map
\[
N_{H}^{G}i_{H}^{\ast}R\to R.
\]
If an \(H\)-spectrum \(E\) has free \(i_{H}^{\ast}R\)-homology with a finite basis, then the function spectrum \(F\big(N_{H}^{G}E,R\big)\) is equivalent to a free \(R\)-module, and the basis is the dual to the one for \(N_{H}^{G}E\).
\end{proposition}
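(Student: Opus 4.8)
The plan is to reduce everything to the universal coefficients theorem proved above, applied to the $G$-spectrum $N_H^G E$. By hypothesis there is a \emph{finite} $H$-set $T_E$, a virtual bundle $V_E$ over $T_E$, and a basis $\vec x$ giving an equivalence of $i_H^{\ast}R$-modules $i_H^{\ast}R\wedge M(V_E)\xrightarrow{\ \sim\ }i_H^{\ast}R\wedge E$. First I would promote this to a statement about $N_H^G E$: the relative-norm proposition above shows that $N_H^G$ sends $\Sp^H_{i_H^{\ast}R,\fpr}$ to $\Sp^G_{R,\fpr}$, taking $\vec x$ to $N_H^G\vec x$, while Proposition~\ref{prop:NormsofThom} identifies $N_H^G M(V_E)\simeq M\big(\Map^H(G,V_E)\big)$, a Thom spectrum over the $G$-set $T:=\Map^H(G,T_E)$. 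Since $T_E$ and $G/H$ are finite, $T$ is a finite $G$-set; so, writing $W:=\Map^H(G,V_E)$, we obtain an equivalence of $R$-modules $R\wedge M(W)\xrightarrow{\ \sim\ }R\wedge N_H^G E$ with $W$ a virtual bundle over a finite $G$-set. In other words $N_H^G E$ has free $R$-homology with the finite basis $N_H^G\vec x$.

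Next I would feed this into the universal coefficients theorem above, with $R$ in the role of the coefficient module and $N_H^G E$ in the role of $E$. Because $T$ is finite, only finitely many orbits occur, so $M(W)$ is a finite wedge of orbit Thom spectra, the finite-type hypothesis is automatic, and the natural map from the wedge of Wirthm\"uller duals to the product is trivially an equivalence; the theorem therefore yields an equivalence of $R$-modules
\[
F\big(N_H^G E,R\big)\simeq R\wedge M(-W).
\]
Now $-W$ is again a virtual bundle over the $G$-set $T$ (negate $W$ orbitwise), so $R\wedge M(-W)$ is, by the very definition of $\Sp^G_{R,\fpr}$, a free $R$-module. This gives the first assertion.

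It remains to identify the basis, and here I would simply unwind the proof of the universal coefficients theorem. Choosing an orbit decomposition $T\cong\coprod_t G/G_t$ and writing $W_t$ for the restriction of $W$ to the $t$-th orbit, the equivalence above is the wedge over $t$ of the Wirthm\"uller equivalences $F\big(G_+\smashover{G_t}S^{W_t},R\big)\simeq G_+\smashover{G_t}S^{-W_t}\wedge R$, so the resulting basis of $F(N_H^G E,R)$ consists precisely of the Wirthm\"uller duals of the orbit components of $N_H^G\vec x$. The only point needing a moment's care is that this collection really is dual to $N_H^G\vec x$ with respect to the evaluation pairing $F(N_H^G E,R)\wedge N_H^G E\to R$; but this is formal, since the Wirthm\"uller isomorphism is, up to the chosen orientation, the counit of the induction--restriction adjunction, so the pairing restricts to the identity on each orbit summand and to zero between distinct summands. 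I do not expect a genuine obstacle: the real content is the bookkeeping of which $G$-set and which virtual bundle appear under the norm, together with the observation that finiteness of the basis is exactly what removes the finite-type caveat from the universal coefficients theorem.
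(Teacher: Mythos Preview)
Your argument is correct and follows exactly the route the paper intends: the proposition is stated without proof, immediately after the universal coefficients theorem and the norm-preservation proposition, precisely because it is meant to be the composite of those two results. Your reduction---norm the finite basis via Proposition~\ref{prop:NormsofThom} to get a finite (hence finite-type) basis for $N_H^G E$, then apply the universal coefficients theorem with $M=R$ to obtain $F(N_H^G E,R)\simeq R\wedge M(-W)$ with the Wirthm\"uller-dual basis---is the implicit proof.
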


In particular, analyzing the Thom spectrum for the functional dual, we have that for \(E\) as in the proposition, the \(R\)-cohomology of \(N_{H}^{G}E\) can be described as the norm of the \(i_{H}^{\ast}R\)-cohomology of \(E\).

\subsubsection{Pullbacks}
Finally, freeness and projectivity is also preserved by restricting along quotient maps (also called ``pulling back''). 
\begin{notation}
If \(N\) is a normal subgroup of \(G\) and \(q\colon G\to Q=G/N\), then let
\[
q^{\ast}\colon\Sp^{Q}\to \Sp^{G}
\]
be the inclusion of \(Q\)-spectra into \(G\)-spectra.
\end{notation}

\begin{proposition}\label{prop:Pullbacks}
The functor \(q^{\ast}\) induces a functor
\[
q^{\ast}\colon\Sp^{Q}_{R,\fpr}\to \Sp^{G}_{q^{\ast}R,\fpr}.
\]

If \(E\in\Sp^{Q}\) has a basis for \(R\), then \(q^{\ast}E\) has a basis for \(q^{\ast}R\).
\end{proposition}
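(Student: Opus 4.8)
The plan is to reduce everything to a single structural fact about the inflation (``pullback'') functor \(q^{\ast}\): it is strong symmetric monoidal for the smash product, it preserves weak equivalences and retracts, and it carries the Thom spectrum of a virtual bundle over a \(Q\)-set to the Thom spectrum of the pulled-back virtual bundle over a \(G\)-set. Granting this, the argument is essentially a transcription of the proofs of Propositions~\ref{prop:FreeForSums} and~\ref{prop:FreeForProducts}: since \(\Sp^{G}_{q^{\ast}R,\fpr}\) is a full subcategory and \(q^{\ast}\) preserves retracts, it suffices to treat the free case, and lax symmetric monoidality already ensures that \(q^{\ast}R\) is an \(E_{\infty}\)-ring spectrum, so the target category makes sense.

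First I would pin down the Thom-spectrum compatibility. A \(Q\)-set \(T\) pulls back to the \(G\)-set \(q^{\ast}T\) on which \(G\) acts through \(q\); a transitive piece \(Q/K\) becomes \(G/q^{-1}(K)\), and a virtual equivariant bundle \(V\) over \(T\) pulls back to a virtual equivariant bundle \(q^{\ast}V\) over \(q^{\ast}T\), which on the orbit \(Q/K\) with associated virtual \(K\)-representation \(W\) is the inflation of \(W\) to a virtual \(q^{-1}(K)\)-representation. Since \(q^{\ast}\), the Thom spectrum construction, and the passage from \(T\) to its finite sub-\(Q\)-sets all commute with filtered colimits, it is enough to check \(q^{\ast}M(V)\simeq M(q^{\ast}V)\) when \(T\) is finite; then \(M(V)\) is a finite wedge of spectra \(Q_{+}\smashover{K}S^{W}\), and \(q^{\ast}\) sends each such summand to \(G_{+}\smashover{q^{-1}(K)}S^{q^{\ast}W}\), because inflation commutes with the induction \(Q_{+}\smashover{K}(\mhyphen)\) and takes representation spheres to representation spheres. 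Verifying that inflating a \(K\)-representation along \(q^{-1}(K)\to K\) really produces the correct summand of \(q^{\ast}V\), and that inflation commutes with induction, is the only point that requires genuine care; the rest is bookkeeping.

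With that in hand, suppose \(E\in\Sp^{Q}_{R,fr}\) has basis \(\vec{x}\colon M(V_{E})\to R\wedge E\), so \(R\wedge M(V_{E})\xrightarrow{R\wedge\vec{x}}R\wedge E\) is an equivalence of \(R\)-modules. Applying \(q^{\ast}\) and using strong monoidality turns this into \(q^{\ast}R\wedge M(q^{\ast}V_{E})\xrightarrow{q^{\ast}R\wedge q^{\ast}\vec{x}}q^{\ast}R\wedge q^{\ast}E\), an equivalence of \(q^{\ast}R\)-modules (a strong monoidal functor preserves module objects and module maps), with \(q^{\ast}V_{E}\) a virtual bundle over the \(G\)-set \(q^{\ast}T_{E}\); hence \(q^{\ast}E\in\Sp^{G}_{q^{\ast}R,fr}\) with basis \(q^{\ast}\vec{x}\in(q^{\ast}R)_{\mstar}(q^{\ast}E)(q^{\ast}T_{E},q^{\ast}V_{E})\). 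The projective case is identical, with ``equivalence'' replaced by ``retract'' and using that \(q^{\ast}\) preserves retracts. I expect the Thom-spectrum compatibility of the second paragraph to be the main obstacle; everything downstream of it is formal.
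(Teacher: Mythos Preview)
Your proposal is correct and follows essentially the same route as the paper: reduce to the free case via preservation of retracts, use that \(q^{\ast}\) is strong symmetric monoidal, and identify \(q^{\ast}M(V_{E})\simeq M(q^{\ast}V_{E})\). The paper's own proof is two sentences and simply asserts the last identification ``by construction,'' whereas you spell out the orbit-by-orbit check; your additional detail is not a different argument, just a more explicit version of the same one.
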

\begin{proof}
Again, it suffices to check on the full subcategory of \(R\)-free spectra, and since \(q^{\ast}\) is strong symmetric monoidal, it suffices to show on the associated Thom spectra. By construction, 
\[
q^{\ast}M(V_{E})\simeq M(q^{\ast}V_{E}),
\] 
where \(q^{\ast}V_{E}\) is just \(V_{E}\) viewed as a \(G\)-virtual bundle.
\end{proof}

\begin{remark}
The fixed points functors do not preserve projective objects, as the tom Dieck splitting shows. However, the canonical map
\[
q^{\ast}(R^{G})\to R
\]
gives us a map
\[
q^{\ast}\colon\Sp_{R^{G},\fpr}\to\Sp^{G}_{R,\fpr}.
\]
\end{remark}

This gives another proof of a basic construction in Bredon homology.
\begin{example}
If \(E\) is an ordinary, non-equivariant spectrum such that \(H_{\ast}(E;\mathbb Z)\) is free in each degree, then \(q^{\ast}E\) has free Bredon homology for any coefficients:
\[
H_{\ast}(q^{\ast}E;\mM)\cong H_{\ast}(E;\Z)\otimes \mM.
\] 
For any \(G\), 
\[
{\pi}_{0}q^{\ast}H\mathbb Z=\m{A},
\]
and the negative homotopy groups are all zero. The zeroth Postnikov section then gives us an \(E_{\infty}\)-map
\[
q^{\ast}H\mathbb Z\to H\mA.
\]
The result then follows from Proposition~\ref{prop:Pullbacks}, Proposition~\ref{prop:FreeforBaseChange}, and Theorem~\ref{thm:FreeForModules}.
\end{example}

\begin{example}
If \(E\) is an ordinary, non-equivariant spectrum such that \(H_{\ast}(E; \mathbb F_{p})\) is free in each degree, then for any \(G\) and for any Green functor \(\mR\) in which \(p\cdot 1=0\in\mR(G/G)\), \(q^{\ast}E\) has free \(\mR\)-homology. This is because the pullback of \(H\mathbb F_{p}\) has \({\pi}_{0}=\mA/p\), the initial example of such a Green functor. 

In particular, for any \(G\) and for any \(\mR\) of this form, this applies to 
\[
E=\Sigma^{\infty} K(\mathbb F_{p},m),
\] 
the pullback of which is the suspension spectrum of the {\EM} space for the constant coefficient system \(\m{\F}_{p}\).
\end{example}

Both of these examples are also free with bases in integer stems. In particular this functorially describes the \({\RO}\)-graded homology, by Theorem~\ref{thm:InducedGrading}.

\subsection{Freeness and spaces}
The author's primary interest in these freeness results comes from the connection between the [twisted] smash products in spectra and [twisted] Cartesian products in spaces. 

\begin{proposition}\label{prop:SuspensionSpectrumGSym}
If \(X\) is an \(H\)-space, then we have a natural equivalence
\[
N_{H}^{G}\Sigma^{\infty}_{+}X\simeq\Sigma^{\infty}_{+}\Map^{H}\!(G,X).
\]

If \(X\) and \(Y\) are \(G\)-spaces, then 
\[
\Sigma^{\infty}_{+}(X\times Y)\simeq \Sigma^{\infty}_{+}X\wedge\Sigma^{\infty}_{+}Y.
\]
\end{proposition}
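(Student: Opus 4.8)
The second statement is the elementary one: $\Sigma^{\infty}_{+}$ factors as $\Sigma^{\infty}\circ(-)_{+}$, and $(-)_{+}\colon\Top^{G}\to\Top^{G}_{*}$ is strong symmetric monoidal via the natural homeomorphism $(X\times Y)_{+}\cong X_{+}\wedge Y_{+}$, while $\Sigma^{\infty}$ is strong symmetric monoidal from based $G$-spaces to $\Sp^{G}$. Composing the two factors gives the asserted natural equivalence $\Sigma^{\infty}_{+}(X\times Y)\simeq\Sigma^{\infty}_{+}X\wedge\Sigma^{\infty}_{+}Y$; naturality in $X$ and $Y$ is inherited from the two factors.

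For the first statement the plan is to identify the norm of a suspension spectrum with the suspension spectrum of the \emph{space-level} norm, and then to identify that space-level norm with coinduction. Conceptually: the functors $N_{H}^{G}$ are part of the data making $\Sp^{(-)}$ a normed (that is, $G$-symmetric monoidal) category; the Cartesian structure on $\Top^{(-)}$ is likewise normed, with $N_{H}^{G}$ the $G/H$-indexed Cartesian product, which is precisely the right adjoint $\Map_{H}(G,-)$ of $i_{H}^{\ast}$ on spaces; and $\Sigma^{\infty}_{+}$, being strong symmetric monoidal (indeed colimit-preserving), is a normed functor between these, hence commutes with the norms. So $N_{H}^{G}\Sigma^{\infty}_{+}X\simeq\Sigma^{\infty}_{+}N_{H}^{G}X=\Sigma^{\infty}_{+}\Map_{H}(G,X)$, naturally in $X$.

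To make this concrete using only what is already established, I would first reduce to the case that $X$ is a finite discrete $H$-set $T$: there $\Sigma^{\infty}_{+}T=M(0)$ is the Thom spectrum of the zero bundle over $T$, and $\Map_{H}(G,T)$ carries the corresponding zero bundle, so the desired equivalence is exactly Proposition~\ref{prop:NormsofThom} applied to $V=0$ (with $\Map_{H}$ the coinduction written $\Map^{H}$ there). From finite discrete $H$-sets one passes to arbitrary $H$-CW complexes by a colimit argument, using that $N_{H}^{G}$, $\Sigma^{\infty}_{+}$, and $\Map_{H}(G,-)$ all preserve the relevant colimits (filtered and, more generally, sifted). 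To pin down the target, choosing coset representatives $g_{1},\dots,g_{n}$ for $H\backslash G$ presents $N_{H}^{G}X$ as $X^{n}$ with $G$ permuting the factors and applying the transition automorphisms in $H$; this is the standard model of $\Map_{H}(G,X)$, so the two descriptions agree.

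The main obstacle is homotopical bookkeeping around the norm, not anything conceptual. The point-set norm of \cite{HHR} is defined on a specific model category and a priori requires cofibrant (or at least flat) input, so one must check that $\Sigma^{\infty}_{+}X$ of an $H$-CW complex is flat enough that $N_{H}^{G}\Sigma^{\infty}_{+}X$ already computes the derived norm, and that the reduction-to-discrete-sets comparison is compatible with $N_{H}^{G}$ up to weak equivalence rather than merely on the point-set level. Both points are standard — handled in the appendices of \cite{HHR}, and automatic in the $\infty$-categorical normed-category formalism — but this is where the actual verification lives; the identification of the $G/H$-indexed Cartesian product with $\Map_{H}(G,-)$ and the passage from finite $H$-sets to $H$-spaces are then routine.
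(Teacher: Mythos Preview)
The paper does not actually prove this proposition: it is stated without proof, as a standard fact about the Hill--Hopkins--Ravenel norm (the first equivalence is essentially \cite[Proposition~A.59]{HHR}, and the second is the elementary strong monoidality of \(\Sigma^{\infty}_{+}\)). So there is nothing to compare against on the paper's side.

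Your argument is correct. The second statement is exactly as you say. For the first, your conceptual explanation (normed structures and \(\Sigma^{\infty}_{+}\) as a normed functor) is the right picture, and your concrete reduction is also fine: the case of finite \(H\)-sets is indeed Proposition~\ref{prop:NormsofThom} with \(V=0\), and the extension to general \(H\)-CW complexes via sifted colimits works because \(\Map_{H}(G,-)\), being a finite product with a twisted \(G\)-action, commutes with sifted colimits in spaces, while \(N_{H}^{G}\) and \(\Sigma^{\infty}_{+}\) do so on the spectral side. Your caveat about needing suitably cofibrant input for the point-set norm is well placed and is exactly what the appendices of \cite{HHR} handle; since \(\Sigma^{\infty}_{+}\) of an \(H\)-CW complex is cofibrant, there is no real obstruction.
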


We can assemble all of our results so far into a summary theorem.

\begin{theorem}\label{thm:SymMonoidalFreeForSpaces}
Let \(X\) be an \(K\)-space such that \(X\) has free \(i_{K}^{\ast}R\)-homology. Then we have a natural isomorphism
\[
N_{K}^{G}R_{\m{\star}}(X)\cong R_{\m{\star}}\big(\Map^{H}\!(G,X)\big),
\]
and moreover, this is free on the basis \(N_{H}^{G}\vec{x}\), where \(\vec{x}\) is a basis for the homology of \(X\).

If \(X\) and \(Y\) are \(G\)-spaces that have \(R\)-free homology, then
\[
R_{\m{\star}}(X\times Y)\cong R_{\m{\star}}(X)\boxover{R_{\m\star}} R_{\m{\star}}(Y),
\]
with a basis given by the product of the bases.
\end{theorem}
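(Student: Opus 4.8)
The plan is to deduce both statements by assembling the spectrum-level results of the previous subsections, using the Proposition immediately above — the equivalences \(N_{K}^{G}\Sigma^{\infty}_{+}X\simeq\Sigma^{\infty}_{+}\Map_{K}(G,X)\) and \(\Sigma^{\infty}_{+}(X\times Y)\simeq\Sigma^{\infty}_{+}X\wedge\Sigma^{\infty}_{+}Y\) — to translate [twisted] products of spaces into [twisted] smash products of suspension spectra.

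For the first part, ``\(X\) has free \(i_{K}^{\ast}R\)-homology'' means precisely \(\Sigma^{\infty}_{+}X\in\Sp^{K}_{i_{K}^{\ast}R,\fpr}\), and a basis \(\vec{x}\) is a basis in the sense of the earlier definition. Assuming, as throughout, that \(R\) carries an \(E_{\infty}\)-map \(N_{K}^{G}i_{K}^{\ast}R\to R\) (in the Bredon case, Ullman's theorem for Tambara functors), the Proposition saying that \(N_{K}^{G}\) restricts to a functor \(\Sp^{K}_{i_{K}^{\ast}R,\fpr}\to\Sp^{G}_{R,\fpr}\) carrying a basis to its norm, combined with the first equivalence above, shows that \(\Sigma^{\infty}_{+}\Map_{K}(G,X)\simeq N_{K}^{G}\Sigma^{\infty}_{+}X\) is \(R\)-free with basis \(N_{K}^{G}\vec{x}\). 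The identification of the homology is then the Corollary \(R_{\mstar}(N_{K}^{G}E)\cong N_{K}^{G}\big(i_{K}^{\ast}R_{\mstar}(E)\big)\) applied to \(E=\Sigma^{\infty}_{+}X\). Naturality in \(X\), and compatibility with any ambient multiplicative structure, are inherited from those of the three inputs, all of which live at the level of \(G\)-spectra.

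For the second part, the same Proposition gives \(\Sigma^{\infty}_{+}(X\times Y)\simeq\Sigma^{\infty}_{+}X\wedge\Sigma^{\infty}_{+}Y\), and by hypothesis both smash factors lie in \(\Sp^{G}_{R,\fpr}\). Proposition~\ref{prop:FreeForProducts} shows the smash product is \(R\)-free with basis the box product of the two bases; concretely, under \(M(V_{X})\wedge M(V_{Y})\simeq M(V_{X}\times V_{Y})\) the external product \(\vec{x}\wedge\vec{y}\) is a basis. The Künneth-type Corollary to Theorem~\ref{thm:FreeForModules} then gives \(R_{\mstar}(X\times Y)\cong R_{\mstar}(X)\boxover{R_{\mstar}}R_{\mstar}(Y)\) with this basis.

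No new content is involved beyond this assembly; the remaining work is purely bookkeeping. The one point deserving care is that \(\boxover{R_{\mstar}}\) is the relative box product of \(R_{\mstar}\)-modules — a coend over the Mackey/Green-functor indexing category — so ``product of the bases'' should be read as the image of the external product under the comparison map induced by \(M(V_{X})\wedge M(V_{Y})\simeq M(V_{X}\times V_{Y})\). I expect the ``main obstacle'', such as it is, to be formulating this compatibility, together with the compatibility of the first isomorphism with the full bundle-twisted Mackey (and co-Tambara) structure, precisely — rather than proving anything of substance.
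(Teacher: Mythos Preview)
Your proposal is correct and matches the paper's approach exactly: the paper presents this as a ``summary theorem'' assembling the preceding results, with no separate proof given beyond the remark that it collects what has already been established. Your assembly---the Proposition on suspension spectra of coinduced/product spaces, the norm-preserves-freeness Proposition, the K\"unneth Corollary to Theorem~\ref{thm:FreeForModules}, and Proposition~\ref{prop:FreeForProducts}---is precisely what the paper intends.
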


\begin{example}
In general, {\coinduction} preserves {\EM} spaces: if \(\mM\) is an \(H\)-Mackey functor, then we have an equivalence
\[
\Map^{H}\!\big(G,K(\mM,n)\big)\simeq K\big(\Ind_{H}^{G}\mM,n\big).
\]
(More generally, the \(G\)-space \(\Map^{H}\!\big(G,K(\mM,V)\big)\) represents the functor 
\[
X\mapsto H^{V}(i_{H}^{\ast}X;\mM),
\]
so these are all kinds of {\EM} spaces.)

When \(H=\{e\}\), this allows us to determine the homology of {\EM} space attached to any induced Mackey functor with coefficients in a \(N_{e}^{G}k\)-algebra, for \(k\) a field. As an applicatioin, we have
\[
H_{\mstar}\Big(K\big(\Ind_{e}^{C_{p}}\F_{p},n\big);\m{\F}_{p}\Big)\cong N_{e}^{C_{p}}\Big(H_{\ast}\big(K(\F_{p},n);\F_{p}\big)\Big),
\]
and the latter was determined by Cartan and Serre \cite{CartanEM2}, \cite{SerreEM}.
\end{example}

This is closely connected to some additional structure that is often difficult to access. Equivariant spaces are canonically \(G\)-cocommutative comonoids. In addition to the coproduct
\[
X\to X\times X,
\]
they have conorm maps
\[
X\xrightarrow{\Delta^{G/H}} \Map(G/H,X)\cong\Map^{H}\!(G,X). 
\]
The contravariant Yoneda functor gives for any \(X\) a functor
\[
\yo_{X}\colon \Map(\mhyphen,X)\colon (\Fin^{G})^{op}\to\Top^{G},
\]
and on passage to fixed points, these conorm maps are exactly giving the usual coefficient system of fixed points for any \(G\)-space. 
\begin{definition}
If \(f\colon S\to T\) is a map of finite \(G\)-sets, then let
\[
\psi_{f}\colon R_{\m{\star}}\big(\Map(T,X)\big)\to R_{\m{\star}}\big(\Map(S,X)\big)
\]
be the ``conorm'' map associated to \(f\). When \(f=\nabla_{S}\colon S\amalg S\to S\) is the fold map, we call this the ``coproduct''.
\end{definition}
In general, this is difficult to work with, since we need not have a good [twisted] K\"unneth theorem. In the case we are considering, however, we do!

\begin{theorem}
Let \(R\) be an equivariant commutative ring spectrum, and let \(X\) be a space that has free \(R\)-homology. Then \(R_{\m{\star}}(X)\) has a comultiplication map
\[
R_{\m{\star}}(X)\to R_{\m{\star}}(X)\boxover{R_{\m{\star}}} R_{\m{\star}}(X)
\]
making it a ``co-Green functor''. Moreover, we have for any map of finite \(G\)-sets \(f\colon S\to T\) a conorm map 
\[
N^{T} R_{\m{\star}}(X)\to N^{S}R_{\m{\star}}(X)
\]
which is a map of co-Green functors.
\end{theorem}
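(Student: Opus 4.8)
The plan is to transport the canonical $G$-cocommutative comonoid structure of the space $X$ through the functor $R_{\mstar}(-)$, using freeness to guarantee that $R_{\mstar}(-)$ behaves monoidally and norm-compatibly on all of the spaces that occur. Recall from the discussion preceding the statement that every $G$-space $X$ carries the Yoneda functor $\yo_X=\Map(-,X)\colon(\Fin^G)^{op}\to\Top^G$, which records both the diagonal $X\to X\times X$ and all of the conorm maps $\psi_f$; composing with $\Sigma^\infty_+$ and then $R_{\mstar}(-)$ should manufacture the comultiplication and the conorms, and the axioms should be forced by the identities holding in $\Fin^G$.

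First I would check that freeness propagates to all the spaces in sight. Writing a finite $G$-set as $T\cong\coprod_i G/H_i$, we have $\Map(T,X)\cong\prod_i\Map^{H_i}(G,i_{H_i}^{\ast}X)$, hence $\Sigma^\infty_+\Map(T,X)\simeq\bigwedge_i N_{H_i}^G\Sigma^\infty_+ i_{H_i}^{\ast}X$ by the proposition identifying norms of suspension spectra with suspension spectra of coinduced spaces together with $\Sigma^\infty_+$ of a product being a smash; this is $R$-free by Proposition~\ref{prop:FreeForProducts} and closure of freeness under restriction, induction, and norms. Consequently Theorem~\ref{thm:SymMonoidalFreeForSpaces}, and Theorem~\ref{thm:FreeForModules} with its corollaries (the weak Künneth isomorphism and the norm-of-free formula $R_{\mstar}(N_H^G E)\cong N_H^G(i_H^{\ast}R_{\mstar}(E))$), apply and assemble into a natural isomorphism $R_{\mstar}\big(\Map(T,X)\big)\cong N^{T}R_{\mstar}(X)$, where $N^{T}R_{\mstar}(X)$ is the evident iterated box product over $R_{\mstar}$ of the norms $N_{H_i}^G i_{H_i}^{\ast}R_{\mstar}(X)$. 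Applying $R_{\mstar}(-)$ to the diagonal (the case $T=G/G\amalg G/G$) then yields the comultiplication $R_{\mstar}(X)\to R_{\mstar}(X)\boxover{R_{\mstar}}R_{\mstar}(X)$, and applying it to $\psi_f\colon\Map(T,X)\to\Map(S,X)$ yields the conorm $N^{T}R_{\mstar}(X)\to N^{S}R_{\mstar}(X)$.

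Next I would verify the co-Green/co-Tambara coherences. The counit is $R_{\mstar}$ of the unique map $X\to\ast$; coassociativity and cocommutativity of the comultiplication follow from the corresponding identities among the fold and swap maps of the two-point $G$-set; and the assertion that each $\psi_f$ is a map of co-Green functors, together with the full system of dual Tambara identities (distributivity of fold over coinduction, the double-coset formula), follows from applying $R_{\mstar}(-)$ to the defining relations among the corresponding morphisms in $\Fin^G$. Each such step needs only that the Künneth and norm-of-free isomorphisms are natural in the relevant variables and compatible with the symmetry and associativity constraints of $\boxover{R_{\mstar}}$ and with the norm structure, which holds because $R\wedge(-)$ is a strong symmetric monoidal, norm-preserving functor and both isomorphisms are induced by the multiplication and norm on $R$.

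The main obstacle is exactly this last compatibility: one must know that $\boxover{R_{\mstar}}$ and the norms on the category of $R_{\mstar}$-modules actually compute $R_{\mstar}$ of the space-level smash products and coinductions \emph{as structured objects}, i.e.\ that the weak Künneth and norm-of-free isomorphisms upgrade to (lax) monoidal, norm-compatible natural transformations rather than being mere isomorphisms of underlying $R_{\mstar}$-modules. For this I would invoke the symmetric monoidal and normed structure on $R_{\mstar}$-modules from the forthcoming work with Angeltveit--Bohmann, under which $R\wedge(-)\colon\Sp^G_{R,\fpr}\to\{R_{\mstar}\text{-modules}\}$ is strong symmetric monoidal and norm-preserving on the free/projective subcategory; freeness is precisely the hypothesis making $\m{\pi}_{\mstar}$ of this functor again strong symmetric monoidal. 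One must also track the Weyl-group twists and the degree signs flagged in the earlier remarks when identifying the internal product with the diagonal-restricted external product; these are harmless for the co-Green statement here, but would demand care in any finer co-Tambara refinement.
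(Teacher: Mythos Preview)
Your proposal is correct and follows essentially the same approach as the paper: propagate $R$-freeness to all $\Map(T,X)$ via closure under restriction, norms, and smash products, then read off the comultiplication and conorms by applying $R_{\mstar}$ to the space-level diagonal and conorm maps using Theorem~\ref{thm:SymMonoidalFreeForSpaces}, with the coherences forced by naturality. The paper's own proof is considerably terser---it simply asserts that freeness of $X$ implies freeness of all $\Map(T,X)$, invokes ``our earlier analysis of the homology of the spaces involved'' for the structure maps, and dispatches the co-Green compatibility with ``follows from naturality''---so your more careful unpacking of the monoidal and norm coherences, and your flagging of the Angeltveit--Bohmann framework as the place where these are made precise, is if anything more scrupulous than what the paper records.
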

\begin{proof}
Since \(X\) has \(R\)-free homology, so do all of its restrictions, and hence so do all of the spaces \(\Map(T,X)\) for any finite \(G\)-set \(T\). The comultiplication and conorm maps then follow immediately from our earlier analysis of the homology of the spaces involved.

That the conorm maps are maps of coGreen functors follows from naturality: for any \(f\colon S\to T\), we have a commutative diagram
\[
\begin{tikzcd}
{S\amalg S}
	\ar[r, "\nabla"]
	\ar[d, "f\amalg f"']
	&
{S}
	\ar[d, "q"']
	\\
{T\amalg T}
	\ar[r, "\nabla"']
	&
{T.}
\end{tikzcd}
\]
Functoriality then shows that the conorm associated to \(f\) is a map of co-Green functors.
\end{proof}

\begin{remark}
If a basis for \(R_{\mstar}(X)\) is in integer stems, then the co-Green structure is simply base-changed from the integral one. The conorms essentially never are, due to the degree scaling aspects of the norm.
\end{remark}

Rephrased, a space with free \(R\)-homology gives a strong \(G\)-symmetric monoidal functor
\[
\Set^{G,op}\to R_{\mstar}\mhyphen\Mod,
\]
where the \(G\)-symmetric monoidal structure on \(\Set^{G,op}\) is the dual to the co-Cartesian one. This is the definition of a \(G\) co-commutative comonoid. Since a Tambara functor is a \(G\)-commutative monoid by work of Mazur and Hoyer \cite{HMazur, HoyerThesis}, this gives \(R_{\m{\star}}(X)\) naturally the structure of a co-Tambara functor. 
%The exact axiomatic treatment of the norm maps is dual to that of an ordinary \(G\)-commutative monoid \cite{HHLocalization}. 
Via the Universal Coefficients theorem for \(R\)-free spaces, this structure is dual to the structure which gives rise to the Tambara functor structure on the \(R\)-cohomology of a \(G\)-space.

\begin{remark}
The usual formulation of a Tambara functor describes norm maps \(n_{H}^{K}\colon\mR(G/H)\to\mR(G/K)\) with satisfy certain axioms relating them to the additive Mackey functor structure. These connect, via work of Mazur and Hoyer (\cite{HMazur}, \cite{HoyerThesis}) to \(G\)-commutative monoids in Mackey functors via canonical maps of sets
\[
\mR(G/H)\cong i_{K}^{\ast}\mR(K/H)\to (N^{K/H}i_{K}^{\ast}\mR)(K/K).
\]
The maps go the wrong way to be able to interpret a co-Tambara functor easily in the more traditional way: there is no clear way to extract a map
\[
\mR(G/K)\to \mR(G/H)
\]
from the data:
\[
\begin{tikzcd}
{i_{K}^{\ast}\mR(K/K)}
	\ar[r, "{\psi_{K}^{H}}"]
	&
{\big(N_{H}^{K}i_{K}^{\ast}\mR\big)(K/K)}
	\\
{}
	&
{i_{K}^{\ast}\mR(K/H).}
	\ar[u]
\end{tikzcd}
\]
\end{remark}

\subsection{Hopf algebroids and comodule Tambara functors}
There are many examples of spectra for which we have various kinds of comultiplications. These are ubiquitous amongst spectra which have free homology over themselves. This was shown and used by Hu--Kriz for a stricter notion of free; it works very generally.

\begin{proposition}[{\cite{HuKriz}}]
If \(R\) is an \(E_{\infty}\) ring spectrum such that has free \(R\)-homology, then the pair 
\[
(R_{\mstar},R_{\mstar}R)
\]
forms a Hopf algebroid, and moreover, the \(R\)-homology of any space or spectrum is a comodule over this.
\end{proposition}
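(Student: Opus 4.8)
The plan is to run the classical construction of the Hopf algebroid associated to a flat commutative ring spectrum (in the equivariant setting of Hu--Kriz), with Theorem~\ref{thm:FreeForModules} supplying the Künneth isomorphism that replaces flatness of \(R_{\mstar}R\) over \(R_{\mstar}\). Write \(\eta\colon S^{0}\to R\) for the unit and \(\mu\colon R\wedge R\to R\) for the multiplication in the homotopy category of \(G\)-spectra. Smashing \(\eta\) into the left, respectively the right, factor of \(R\wedge R\) gives two maps \(R\to R\wedge R\); applying \(\m\pi_{\mstar}\) produces the left and right unit maps \(\eta_{L},\eta_{R}\colon R_{\mstar}\to R_{\mstar}R\). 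The multiplication \(\mu\) induces the counit \(\epsilon\colon R_{\mstar}R\to R_{\mstar}\), the symmetry \(\tau\colon R\wedge R\to R\wedge R\) of the \(E_{\infty}\)-structure induces the conjugation \(\chi\colon R_{\mstar}R\to R_{\mstar}R\), and the map
\[
R\wedge R\xrightarrow{\ \id_{R}\wedge\eta\wedge\id_{R}\ } R\wedge R\wedge R
\]
will induce the comultiplication once its target has been identified.

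The essential input is the Künneth identification. Since \(R\) has free \(R\)-homology, Proposition~\ref{prop:FreeForProducts} shows that \(R\wedge R\) again has free \(R\)-homology, and Theorem~\ref{thm:FreeForModules}, applied with \(E=R\) and with an arbitrary \(R\)-module \(M\), gives a natural isomorphism
\[
R_{\mstar}R\boxover{R_{\mstar}}\m\pi_{\mstar}(M)\xrightarrow{\ \cong\ }\m\pi_{\mstar}(R\wedge M).
\]
Taking \(M=R\wedge E\) for an arbitrary \(G\)-spectrum \(E\) this reads \(R_{\mstar}R\boxover{R_{\mstar}}R_{\mstar}(E)\cong R_{\mstar}(R\wedge E)\); in particular \(R_{\mstar}(R\wedge R)\cong R_{\mstar}R\boxover{R_{\mstar}}R_{\mstar}R\), and likewise \(R_{\mstar}(R^{\wedge 3})\cong R_{\mstar}R\boxover{R_{\mstar}}R_{\mstar}R\), the two groupings of the three factors giving canonically isomorphic answers via the \(G\)-symmetric monoidal structure on \(R_{\mstar}\)-modules. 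Composing \(\m\pi_{\mstar}\) of \(\id_{R}\wedge\eta\wedge\id_{R}\) with this isomorphism gives the comultiplication \(\psi\colon R_{\mstar}R\to R_{\mstar}R\boxover{R_{\mstar}}R_{\mstar}R\); composing \(\m\pi_{\mstar}\) of \(\id_{R}\wedge\eta\wedge\id_{E}\colon R\wedge E\to R\wedge R\wedge E\) with the isomorphism \(R_{\mstar}(R\wedge E)\cong R_{\mstar}R\boxover{R_{\mstar}}R_{\mstar}(E)\) gives a coaction
\[
R_{\mstar}(E)\longrightarrow R_{\mstar}R\boxover{R_{\mstar}}R_{\mstar}(E),
\]
natural in \(E\); applying \(\Sigma^{\infty}_{+}\) yields the statement for \(G\)-spaces.

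It remains to verify the axioms. Coassociativity, counitality and the conjugation identities for \((R_{\mstar},R_{\mstar}R)\), together with coassociativity and counitality of the coaction, are obtained by applying \(\m\pi_{\mstar}\) to the evident diagrams of \(G\)-spectra — expressing associativity and unitality of \(\mu\) and coherence of \(\tau\) — and transporting them across the Künneth isomorphisms. I expect this to be the one step demanding genuine care: each such diagram contains several instances of Theorem~\ref{thm:FreeForModules}, and one must check that the various resulting identifications of \(\m\pi_{\mstar}(R^{\wedge n}\wedge E)\) with iterated box products are mutually compatible. That compatibility is a formal consequence of \(R\wedge(\mhyphen)\) being a strong symmetric monoidal functor — the same fact already invoked in the proof of Theorem~\ref{thm:FreeForModules} — so there is no genuine obstruction, and the mathematical content of the proposition is carried entirely by the freeness hypothesis.
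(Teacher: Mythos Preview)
Your argument is correct and is the standard one: use the freeness hypothesis on \(R\) together with Theorem~\ref{thm:FreeForModules} to obtain the K\"unneth isomorphism \(R_{\mstar}(R\wedge E)\cong R_{\mstar}R\boxover{R_{\mstar}}R_{\mstar}(E)\), and then read off the Hopf algebroid structure and coaction from the evident maps of spectra. There is nothing to compare against, since the paper does not supply a proof of this proposition; it is stated with a citation to Hu--Kriz and left at that. Your write-up is in fact more detailed than what the paper provides, and it correctly identifies where the content lies (freeness replacing flatness) and where the bookkeeping lies (compatibility of the iterated K\"unneth identifications, which is formal).
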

\begin{proof}
Since \(R\) has free \(R\)-homology, we can apply the weak K\"unneth theorem to deduce a natural isomorphism for any \(E\)
\[
\pi_{\mstar}(R\wedge R\wedge E)\cong (R_{\mstar}R)\boxover{R_{\mstar}} R_{\star}(E).
\]
Applying this to the case \(E=R\) and considering the unit map in the middle copy of \(R\), we have the comultiplication. Applying this to a general \(E\) and again considering the unit on the rightmost copy of \(R\) give the coaction.
\end{proof}

It is important to note here that there are no hypotheses places on \(E\): the \(R\)-homology of any \(E\) inherits this structure. When \(E\) itself has more structure, then we can say even more about the coaction map. Classically, if \(E\) is a ring object in the homotopy category, then the \(R\)-homology of \(E\) is a comodule algebra, since the unit map is a map of ring spectra. When \(E\) is a \(G\)-commutative monoid in the homotopy category, then we have the analogous Tambara case.

\begin{theorem}
Let \(R\) be a \(G\)-\(E_{\infty}\) ring spectrum which has free \(R\)-homology and let \(E\) be a \(G\)-commutative monoid in the homotopy category. Then \(R_{\mstar}E\) is an \({\RO}\)-graded Tambara functor and the coaction map
\[
R_{\mstar}E\to R_{\mstar}R\boxover{R_{\mstar}} R_{\mstar}E
\]
is a map of \({\RO}\)-graded Tambara functors.
\end{theorem}
\begin{proof}
By assumption, \(R\wedge E\) is a \(G\)-commutative monoid in the homotopy category, and so is \(R\wedge R\wedge E\). The coaction map is the map induced by the unit in the middle, and this is a map of \(G\)-commutative monoids:
\[
R\wedge E\cong R\wedge S^{0}\wedge E\to R\wedge R\wedge E.
\]
The result follows from \cite{AngeltveitBohmann}.
\end{proof}

When \(E\) also has free \(R\)-homology, we can use an external version of the norm, making some of the structure more transparent. To avoid clutter, we restrict exposition to the norms from subgroups to \(G\). The more general ones follow from considering instead various restrictions to subgroups.

\begin{proposition}
Let \(R\) be a \(G\)-\(E_{\infty}\) ring spectrum which has free \(R\)-homology and let \(E\) be a \(G\)-commutative monoid in the homotopy category that has free \(R\)-homology. Then for any subgroup \(K\), we have a commutative diagram of commutative Green functors
\[
\begin{tikzcd}
{N^{G/K} R_{\mstar}(E)}
	\ar[r, "N^{G/K}\psi"]
	\ar[d, "N"']
	&
{N^{G/K}(R_{\mstar}R)\boxover{R_{\mstar}} N^{G/K}(R_{\mstar}E)}
	\ar[d, "N\Box N"]
	\\
{R_{\mstar}E}
	\ar[r, "\psi"']
	&
{R_{\mstar}R\boxover{R_{\mstar}} (R_{\mstar}E).}
\end{tikzcd}
\]
\end{proposition}

As an application of this structure, we can look at the coaction on the homology of the topological Singer construction at the prime \(2\).

\begin{example}
If \(R\) is a \(C_{2}\)-\(E_{\infty}\) ring spectrum that has free \(\m\F_{2}\)-homology, then \(H_{\mstar}(R;\m\F_{2})\) is a comodule Tambara functor over the equivariant dual Steenrod algebra: the comodule structure map is a map of Green functors and commutes with the norms:
\[
\begin{tikzcd}
{N_{e}^{C_{2}}H_{\ast}(i_{e}^{\ast}R;\F_{2})}
	\ar[r, "N_{e}^{C_{2}}\psi"]
	\ar[d, "N"']
	&
{\big(N_{e}^{C_{2}}\cA_{\ast}\big)\boxover{H_{\mstar}} N_{e}^{C_{2}} H_{\ast}(i_{e}^{\ast}R;\F_{2})}
	\ar[d, "N\Box N"]
	\\
{H_{\mstar}(R;\m\F_{2})}
	\ar[r, "\psi"']
	&
{\cA_{\mstar}\Boxover{H_{\mstar}} H_{\mstar}(R;\m\F_{2})}.
\end{tikzcd}
\]
This means in particular that the coaction on the spectrum \(N_{e}^{C_{2}}H\F_{2}\) is completely determined by the coaction on \(H\F_{2}\), allowing us to analyze the homotopy groups of this spectrum by a Hu--Kriz style Adams spectral sequence \cite{HuKriz}.
\end{example}

\section{An even nicer class of spectra}
\subsection{Homological purity}
We single out a class of spectra for which computations are strikingly simple, being completely determined by the homology of the underlying spectrum.
\begin{definition} 
A {\defemph{regular slice sphere}} is a \(G\)-spectrum of the form
\[
G_+\smashover{H}S^{k\rho_H},
\]
for some integer \(k\). The dimension of such a regular slice sphere is \(k|H|\).
\end{definition}

In \cite{HHR}, a spectrum \(E\) was called ``pure'' if the slice associated graded of \(E\) is a wedge of regular slice spheres smashed with \(H\mZ\). We build on that here.

\begin{definition}
A \(G\)-spectrum \(E\) is {\defemph{homologically pure}} if there is 
\begin{enumerate}
\item a set \(\mathcal I_E\), 
\item a function \(i\mapsto k_i\) assigning to elements of \(\mathcal I_E\) an integer, and 
\item a function \(i\mapsto H_i\) assigning to elements of \(\mathcal I_E\) a subgroup of \(G\),
\end{enumerate} 
such that we have an equivalence of \(H\mZ\)-modules
\[
H\mZ\wedge E\simeq H\mZ\wedge \bigvee_{i\in\mathcal I_E} G_{+}\smashover{H_i}S^{k_i\rho_{H_i}}.
\]

A homologically pure \(G\)-spectrum \(E\) is {\defemph{isotropic}} if there are no summands with a trivial stabilizer.
\end{definition}

\begin{remark}
A slightly restricted form of this definition was independently given by Pitsch--Ricka--Scherer in their analysis of conjugation spaces \cite{PRS}. The choice name and reason for the name are the same as the one here: analogy with \cite{HHR}.
\end{remark}

In fact, we can work more generally, using arbitrary zero-slices. Any zero-slice is of the form \(H\mM\) for some Mackey functor in which all restriction maps along surjective maps are injections \cite[Proposition 4.50]{HHR}, and the map \(\m{\pi}_0\) induces an equivalence between zero-slices and the full subcategory of Mackey functors of this form.

\begin{notation}
	We say that a Mackey functor \(\mM\) is a zero-slice if \(H\mM\) is zero-slice.
\end{notation}

Since the zero-slice of the zero sphere is \(H\mZ\), any zero-slice is a module over \(H\mZ\). This shows that we could have instead used arbitrary zero-slices. 

\begin{proposition}
A \(G\)-spectrum \(E\) is homologically pure if and only if for every zero-slice \(\mM\), we have an equivalence of \(\mZ\)-modules
\[
H\mM\wedge E\simeq H\mM\wedge \bigvee_{i\in\mathcal I_E} G_+\smashover{H_i} S^{k_i\rho_{H_i}}.
\]
\end{proposition}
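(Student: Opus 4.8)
The plan is to note that one direction is a triviality and the other is base change along the unit $H\mZ\to H\mM$. For the ``if'' direction: the constant Mackey functor $\mZ$ is itself a zero-slice (it is the zero-slice of $S^0$), so specializing the displayed equivalence to the case $\mM=\mZ$ recovers precisely the defining equivalence of homological purity. Hence no work is needed there.

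For the ``only if'' direction, suppose
\[
\mZ\wedge E\simeq \mZ\wedge \bigvee_{(k,H)\in\mathcal I_E} G_+\smashover{H}S^{k\rho_H}.
\]
Writing $T_E=\coprod_{(k,H)\in\mathcal I_E}G/H$ and letting $V_E\to T_E$ be the virtual bundle whose restriction to the summand indexed by $(k,H)$ is $k\rho_H$, this says exactly that $E$ has free $H\mZ$-homology with basis supported on $M(V_E)=\bigvee_{(k,H)\in\mathcal I_E}G_+\smashover{H}S^{k\rho_H}$, i.e.\ $E\in\Sp^{G}_{\mZ,\mathrm{fr}}$. Now fix an arbitrary zero-slice $\mM$. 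As recalled just before the statement, $H\mM$ is canonically a module over $H\mZ$. Applying Theorem~\ref{thm:FreeForModules} with $R=H\mZ$ and $M=H\mM$ (whose proof, in the $R$-free case, produces an honest equivalence $E\wedge M\simeq M(V_E)\wedge M$ of $R$-modules) yields an equivalence of $H\mZ$-modules
\[
H\mM\wedge E\simeq H\mM\wedge M(V_E)=H\mM\wedge \bigvee_{(k,H)\in\mathcal I_E}G_+\smashover{H}S^{k\rho_H},
\]
which is the required equivalence of $\mZ$-modules. Equivalently, one can argue directly by smashing the purity equivalence over $H\mZ$ with $H\mM$: since $H\mZ\wedge(-)$ is left adjoint to the forgetful functor one has $H\mM\wedge E\simeq H\mM\smashover{H\mZ}(H\mZ\wedge E)$, and applying $H\mM\smashover{H\mZ}(-)$ to $H\mZ\wedge E\simeq H\mZ\wedge M(V_E)$ gives the same conclusion.

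There is no genuine obstacle here beyond bookkeeping; the single point requiring a moment's care is that the output is demanded as an equivalence of $\mZ$-modules (equivalently $H\mZ$-modules) rather than merely of $G$-spectra, which is exactly why it matters that every zero-slice $H\mM$ is an $H\mZ$-module and that the equivalence furnished by Theorem~\ref{thm:FreeForModules} is natural and takes place in $R$-modules. Note also that one cannot instead cite Proposition~\ref{prop:FreeforBaseChange}, since $H\mM$ is only a module and not an $E_\infty$-ring in general, so Theorem~\ref{thm:FreeForModules} (or the direct base-change argument) is the correct tool.
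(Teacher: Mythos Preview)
Your proof is correct and follows exactly the approach the paper indicates: the paper simply remarks that since $\mZ$ is the zero-slice of $S^{0}$, every zero-slice is an $H\mZ$-module, and then states the proposition without further proof. Your argument is a careful unpacking of this, invoking Theorem~\ref{thm:FreeForModules} (or equivalently the direct base-change $H\mM\smashover{H\mZ}(-)$) for the nontrivial direction and noting that $\mZ$ is itself a zero-slice for the trivial one.
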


\begin{notation}
If we have a decomposition like that of homological purity or isotropic homological purity only for particular Green zero-slices \(\mR\), then we will say that \(E\) has [isotropic] homological purity for \(\mR\).
\end{notation}

The regular representations are closed under restrictions, conjugations, and inductions. This gives the following.
\begin{proposition}
If \(E\) is a homologically pure \(H\)-spectrum, then 
\begin{enumerate}
\item \(G_{+}\smashover{H}E\) is homologically pure,
\item if \(K\subset H\), then \(i_{K}^{\ast}E\) is homologically pure, and
\item \(N_{H}^{G}E\) is homologically pure.
\end{enumerate}
\end{proposition}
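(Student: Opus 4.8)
The plan is to reduce each of the three statements to the corresponding statement for the ``model'' spectra appearing in the definition — wedges of regular slice spheres $G_{+}\smashover{H}S^{k\rho_{H}}$ — and to check that this class is preserved, up to $\mZ$-linear equivalence, by induction, restriction, and the norm. The whole point is that induction, restriction, conjugation, and the norm all carry Thom spectra of virtual bundles over $G$-sets to Thom spectra of virtual bundles over $G$-sets (Propositions~\ref{prop:FreeForProducts} and~\ref{prop:NormsofThom}, together with the analogous easy statements for $i_{K}^{\ast}$ and $G_{+}\smashover{H}(\mhyphen)$), so the only real content is that ``being a disjoint union of multiples of regular representations'' is preserved on the level of bundles.

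For (1), the projection formula gives $\mZ\wedge(G_{+}\smashover{H}E)\simeq G_{+}\smashover{H}(i_{H}^{\ast}\mZ\wedge E)$, and $i_{H}^{\ast}$ of the constant Green functor $\mZ$ is again $\mZ$; applying $G_{+}\smashover{H}(\mZ\wedge\mhyphen)$ to the homological purity equivalence for $E$, using that induction commutes with wedges, and using $G_{+}\smashover{H}\big(H_{+}\smashover{K}S^{k\rho_{K}}\big)\cong G_{+}\smashover{K}S^{k\rho_{K}}$ for $K\subseteq H\subseteq G$ finishes the case. For (2), $i_{K}^{\ast}(\mZ\wedge E)\simeq\mZ\wedge i_{K}^{\ast}E$, and the Mackey double-coset formula identifies $i_{K}^{\ast}\big(H_{+}\smashover{L}S^{k\rho_{L}}\big)$ with a wedge, indexed by $K\backslash H/L$, of spectra of the form $K_{+}\smashover{K\cap {}^{h}L}S^{k\, i^{\ast}c_{h}^{\ast}\rho_{L}}$. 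Since conjugating a regular representation gives the regular representation of the conjugate subgroup, and restricting $\mathbb{R}[M]$ to a subgroup $M'$ gives $[M:M']$ copies of $\mathbb{R}[M']$, each summand is again a regular slice sphere for $K$; hence restriction preserves homological purity (though, as the double cosets show, not necessarily isotropy).

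The content is in (3). Here I would invoke the proposition that $N_{H}^{G}$ induces a functor $\Sp^{H}_{i_{H}^{\ast}R,fr}\to\Sp^{G}_{R,fr}$ taking a basis to the norm of a basis, whenever $R$ admits an $E_{\infty}$-map $N_{H}^{G}i_{H}^{\ast}R\to R$, applied with $R=H\mZ$: the constant Mackey functor $\mathbb{Z}$ is a Tambara functor (a quotient of the Burnside Tambara functor), so $H\mZ$ is a $G$-$E_{\infty}$-ring spectrum and carries all internal norm maps. That proposition, combined with Proposition~\ref{prop:NormsofThom}, yields $\mZ\wedge N_{H}^{G}E\simeq\mZ\wedge M\big(\Map^{H}(G,V_{E})\big)$. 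So it remains to check that when $V_{E}$ is a disjoint union of bundles $k_{\alpha}\rho_{K_{\alpha}}$ over $T_{E}=\coprod_{\alpha}H/K_{\alpha}$, the coinduced bundle $\Map^{H}(G,V_{E})$ is, orbit by orbit over $\Map^{H}(G,T_{E})$, a disjoint union of multiples of regular representations. Using the distributive law to expand $N_{H}^{G}$ of a coproduct — whose summands are indexed by $G$-orbits of functions $G/H\to T_{E}/H$ and, on each such orbit, are smashes of conjugate-restrictions of the $Z_{\alpha}$'s — this reduces (with the bookkeeping of case (2)) to the norm of a single regular slice sphere $H_{+}\smashover{K}S^{k\rho_{K}}=H_{+}\smashover{K}N_{e}^{K}S^{k}$, where the relevant bundle over each orbit of $\Map^{H}(G,H/K)$ is controlled by the multiplicativity of the norm on representation spheres, $N_{K'}^{L'}S^{\rho_{K'}}\simeq S^{\rho_{L'}}$, and so is a multiple of the regular representation of the corresponding isotropy group. (This closure of wedges of regular slice spheres under norms is also implicit in the slice-theorem machinery of \cite{HHR}.)

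The main obstacle is exactly this last identification in (3): unwinding the coinduced $G$-set $\Map^{H}(G,T_{E})$ and the coinduced bundle over each of its orbits, and organizing the distributive-law expansion so that one never leaves ``regular-representation land'' — an induction on $|G|$, with the restriction/conjugation facts of (2) feeding the inductive step. Everything else is a formal consequence of the closure properties already established for $R$-free spectra together with the elementary representation theory of regular representations.
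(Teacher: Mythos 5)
Your argument is correct and matches the paper's intended proof: the paper offers only the one-line observation that induced regular representation spheres are closed under restriction, conjugation, induction (and, via the distributive law, norms), and your writeup supplies exactly the double-coset and $\mathrm{Ind}_{K}^{L}\rho_{K}=\rho_{L}$ bookkeeping that makes this precise, combined with the already-established closure properties of $R$-free spectra for $R=H\mZ$.
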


\subsubsection{Homology}
The main benefit of this definition is from the defining property of zero-slices: all restriction maps are injections, and hence statements can usually be checked at the level of underlying homology.

\begin{notation}
Given an indexing set \(\mathcal I_{E}\) for a homologically pure \(E\), for each integer \(n\), let
\[
\mathcal I_{E}^{n}=\big\{i\in\mathcal I_{E}\mid k_i|H_i|=n\big\}.
\]
\end{notation}

\begin{proposition}\label{prop:HomologyofHomologicallyPure}
Let \(E\) be homologically pure and let \(\mM\) be a zero-slice. For any subgroup \(K\) and for any integer \(k\), we have
\[
\m{H}_{k\rho_K}(E;\mM)\cong \bigoplus_{j\in\mathcal I_{E}^{k|K|}} i_{K}^{\ast}\mM_{i_{K}^{\ast}G/H_j}.
\]
We also have
\[
\m{H}_{k\rho_{K}-1}(E;\mM)\cong \bigoplus_{j\in\mathcal I_{E}^{(k|K|-1)}} \bigoplus_{\underset{K\cap gH_jg^{-1}=\{e\}}{g\in K\backslash G/H_j}} i_{K}^{\ast}\mM_{K}.
\]

In particular, all restriction maps are injections.
\end{proposition}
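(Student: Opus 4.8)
The plan is to reduce the statement to the $\mstar$-graded homotopy of Eilenberg--Mac~Lane spectra on regular representation spheres. Throughout write $\m{H}_{k\rho_{K}-\epsilon}(E;\mM)=[G_{+}\smashover{K}S^{k\rho_{K}-\epsilon},H\mM\wedge E]^{G}$ for $\epsilon\in\{0,1\}$, handling both cases uniformly. First I would invoke the characterization of homological purity in terms of arbitrary zero-slices, applied to $\mM$: smashing the splitting of $E$ with $H\mM$, distributing over the wedge, and using $i_{J}^{\ast}H\mM\simeq Hi_{J}^{\ast}\mM$ gives an equivalence of $\mZ$-modules
\[
H\mM\wedge E\simeq\bigvee_{(j,J)\in\mathcal I_{E}}G_{+}\smashover{J}\Sigma^{j\rho_{J}}Hi_{J}^{\ast}\mM .
\]
Since $G_{+}\smashover{K}S^{k\rho_{K}-\epsilon}$ is a finite, hence compact, $G$-spectrum, the functor $[G_{+}\smashover{K}S^{k\rho_{K}-\epsilon},-]^{G}$ commutes with this wedge --- so no finiteness hypothesis on $E$ is needed --- whence
\[
\m{H}_{k\rho_{K}-\epsilon}(E;\mM)\cong\bigoplus_{(j,J)\in\mathcal I_{E}}\big[G_{+}\smashover{K}S^{k\rho_{K}-\epsilon},\,G_{+}\smashover{J}\Sigma^{j\rho_{J}}Hi_{J}^{\ast}\mM\big]^{G}.
\]

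Next I would evaluate a single summand. The induction--restriction adjunction, the double coset formula for the restriction of an induced spectrum, the Wirthm\"uller isomorphism identifying induction from a subgroup with coinduction, and one more adjunction together bring it to
\[
\bigoplus_{g\in K\backslash G/J}\pi^{L_{g}}_{a_{g}\rho_{L_{g}}-\epsilon}\big(Hi_{L_{g}}^{\ast}\mM\big),\qquad L_{g}:=K\cap gJg^{-1},\quad a_{g}:=\frac{k|K|-j|J|}{|L_{g}|}\in\Z,
\]
using $i_{L_{g}}^{\ast}S^{k\rho_{K}}=S^{k(|K|/|L_{g}|)\rho_{L_{g}}}$, the fact that $\rho_{J}$ restricted to $L_{g}$ (after transport by $c_{g}$) is $(|J|/|L_{g}|)\rho_{L_{g}}$, and $c_{g}^{\ast}(i_{g^{-1}L_{g}g}^{\ast}\mM)\cong i_{L_{g}}^{\ast}\mM$ since $\mM$ is a $G$-Mackey functor. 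The remaining input is the value of $\pi^{L}_{a\rho_{L}-\epsilon}(H\mN)$ for a Mackey functor $\mN$. Using that $S^{a\rho_{L}}$ is genuinely $(a-1)$-connected for $a\geq 1$ and $S^{-a\rho_{L}}$ genuinely $(-a-1)$-connected for $a\leq -1$, one finds: for $\epsilon=0$ it is $0$ unless $a=0$, where it is $\mN(L/L)$; for $\epsilon=1$ it is $0$ unless $a=1$, and since $\rho_{L}-1=\bar\rho_{L}$ is the reduced regular representation, the $a=1$ value is $\mN(L/L)$ when $L=\{e\}$ (there $\bar\rho_{e}=0$) and is $\pi^{L}_{\bar\rho_{L}}(H\mN)$ when $L\neq\{e\}$. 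This last group vanishes for $\mN=i_{L}^{\ast}\mM$: unwinding the slice filtration, the hypothesis that $\mM$ is a zero-slice says exactly that $\pi^{H}_{\bar\rho_{H}}(Hi_{H}^{\ast}\mM)=0$ for every $\{e\}\neq H\leq G$. This is the only point at which the hypothesis is used.

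Assembling: for $\m{H}_{k\rho_{K}}$ the surviving summands are those with $a_{g}=0$, i.e.\ $j|J|=k|K|$, contributing $\bigoplus_{g\in K\backslash G/J}\mM(G/L_{g})=\mM_{i_{K}^{\ast}G/J}$ as $(j,J)$ runs over $\mathcal I_{E}^{k|K|}$; for $\m{H}_{k\rho_{K}-1}$ the surviving summands are those with $a_{g}=1$ and $L_{g}=\{e\}$, i.e.\ $(j,J)\in\mathcal I_{E}^{k|K|-1}$ together with $K\cap gJg^{-1}=\{e\}$, each contributing $\mM(G/e)=\mM_{G}$. This gives the two displayed formulas. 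The injectivity of the restriction maps then follows: a restriction carries $S^{k\rho_{K}}$ to $S^{k(|K|/|K'|)\rho_{K'}}$, and under the identifications above it is a direct sum of inclusions of summands composed with restriction maps of $\mM$, which are injective because a zero-slice has injective restrictions (equivalently, $H\mM\wedge E$ is a wedge of slices, for which restriction is injective on $\mstar$-graded homotopy \cite{HHR}).

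I expect the main obstacle to be the bookkeeping in the second step: tracking the subgroups $L_{g}$, the conjugations $c_{g}$, and the twisted degrees $a_{g}\rho_{L_{g}}$ correctly through the double coset and Wirthm\"uller identifications (the signs flagged in the double coset remark above are harmless, since only the isomorphism type of each summand matters). The one genuinely conceptual point is recognizing, in that same step, that the sole nonobvious vanishing --- $\pi^{L}_{\bar\rho_{L}}(Hi_{L}^{\ast}\mM)=0$ for $L\neq\{e\}$ --- is exactly the zero-slice condition.
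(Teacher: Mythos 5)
Your proof is correct and follows essentially the same route as the paper's: induction--restriction adjunction, the double coset decomposition of \(i_{K}^{\ast}(G_{+}\smashove{J}S^{j\rho_{J}})\), and dimension counting on regular representation spheres smashed with \(H\mM\). You are in fact more explicit than the paper at the two places it glosses over --- the connectivity argument discarding summands with \(a_{g}\neq 0\) (resp.\ \(a_{g}\neq 1\)), and the identification of the vanishing \(\pi^{L}_{\bar\rho_{L}}(Hi_{L}^{\ast}\mM)=0\) for \(L\neq\{e\}\) as precisely the zero-slice hypothesis.
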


\begin{proof}
By assumption, \(H\mM\wedge E\) is a wedge of induced up regular representation spheres smashed with \(H\mM\), and hence a wedge of slices. We therefore have 
\begin{align*}
%&\m{\big[S^{k\rho_{K}},i_{K}^{\ast}H\mM\wedge E\big]}\cong
&\m{\pi}_{k\rho_{K}}\big(i_{K}^{\ast}(H\mM\wedge E)\big)\cong \bigoplus_{j\in\mathcal I_{E}^{k|K|}} \m{\big[S^{k\rho_{K}},i_{K}^{\ast}(G_{+}\smashover{H_j}S^{k_j\rho_{H_j}}\wedge H\mM)\big]}\cong \\
&\bigoplus_{j\in\mathcal I_{E}^{k|K|}} \m{\big[S^{0},i_{K}^{\ast}(G/H_{j+}\wedge H\mM)\big]}
\end{align*}
The result follows by the definition of \(i_{K}^{\ast}\mM_{i_{K}^{\ast}G/H_j}\).

For the case of \((k\rho_{K}-1)\), the argument is identical until the last step. Here, we have a direct sum
\[
\bigoplus \m{\pi}_{-1}\big(S^{-k\rho_{K}}\wedge i_{K}^{\ast}(G_{+}\smashover{H_j}S^{k_j\rho_{H_j}}\wedge H\mM)\big)
\]
The double coset decomposition of \(G\) as a \((K,H_j)\)-biset allows us to rewrite each summand:
\begin{multline*}
\m{\pi}_{-1}\big(S^{-k\rho_{K}}\wedge i_{K}^{\ast}(G_{+}\smashover{H_j}S^{k_j\rho_{H_j}}\wedge H\mM)\big)\cong \\
\bigoplus_{g\in K\backslash G/H_j} \Ind_{(K\cap gH_jg^{-1})}^{K}\m{\pi}_{-1}\big(S^{(n-m)\rho_{(K\cap gH_jg^{-1})}}\wedge H\mM\big),
\end{multline*}
where 
\[
n=k_j[H_j:H_j\cap g^{-1}Kg]\text{ and }m=k[K:K\cap gH_jg^{-1}].
\]
The only regular representation sphere that has a non-trivial homology in degree \(-1\) is the one for the trivial group in degree \(-1\), which gives the second part.
\end{proof}

\begin{corollary}
If \(G=C_{p^{n}}\) and \(E\) is homologically pure and isotropic, then the homology groups in dimensions of the form \((i\rho_{H}-1)\) vanish.
\end{corollary}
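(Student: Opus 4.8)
The plan is to apply the formula for \(\m{H}_{k\rho_{K}-1}\) from Proposition~\ref{prop:HomologyofHomologicallyPure} and show that the resulting indexing set is empty. Fix a nontrivial subgroup \(H\le G\) and an integer \(i\) (for \(H=\{e\}\) the grading \(i\rho_{H}-1\) is the ordinary integer grading \(i-1\), so only the nontrivial subgroups carry content here). That proposition gives
\[
\m{H}_{i\rho_{H}-1}(E;\mM)\cong \bigoplus_{(j,J)\in\mathcal I_{E}^{i|H|-1}}\ \bigoplus_{\substack{g\in H\backslash G/J\\ H\cap gJg^{-1}=\{e\}}}\mM_{G},
\]
so it suffices to show that for every \((j,J)\in\mathcal I_{E}^{i|H|-1}\) there is no double coset \(g\) with \(H\cap gJg^{-1}=\{e\}\).

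First I would use that \(G=C_{p^{n}}\) is abelian, so \(gJg^{-1}=J\) for every \(g\) and the condition collapses to \(H\cap J=\{e\}\), independently of \(g\). Next, since \(E\) is isotropic, no pair of \(\mathcal I_{E}\) --- hence no pair of the subset \(\mathcal I_{E}^{i|H|-1}\) --- has trivial stabilizer, so \(J\neq\{e\}\). Finally I would invoke the elementary structure of \(C_{p^{n}}\): its subgroups form a chain \(\{e\}\subsetneq C_{p}\subsetneq\dots\subsetneq C_{p^{n}}\), so every nontrivial subgroup contains the unique order-\(p\) subgroup \(C_{p}\). As \(H\) and \(J\) are both nontrivial, \(H\cap J\supseteq C_{p}\neq\{e\}\); thus the inner indexing set is empty for every \((j,J)\), the direct sum is zero, and \(\m{H}_{i\rho_{H}-1}(E;\mM)=0\).

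I do not anticipate any real obstacle: granting Proposition~\ref{prop:HomologyofHomologicallyPure}, the corollary follows from the subgroup-lattice structure of a cyclic \(p\)-group together with the isotropy hypothesis. The place where \(G=C_{p^{n}}\) is genuinely used is in reducing the conjugation-twisted condition \(H\cap gJg^{-1}=\{e\}\) first to \(H\cap J=\{e\}\) and then to a contradiction; for a general finite group this intersection can easily be trivial, so the statement would fail without the cyclic-\(p\)-group assumption. The only bookkeeping point is the degenerate range \(H=\{e\}\) noted above.
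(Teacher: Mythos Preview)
Your argument is correct and is precisely the intended deduction from Proposition~\ref{prop:HomologyofHomologicallyPure}; the paper states the corollary without proof. Your caveat about \(H=\{e\}\) is also apt: the statement is implicitly for nontrivial \(H\), since for \(H=\{e\}\) the degree \(i\rho_{\{e\}}-1=i-1\) is an ordinary integer and the first formula of the proposition (which is generally nonzero) applies rather than the second.
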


\begin{definition}
A homologically pure \(G\)-spectrum \(E\) is {\defemph{generalized isotropic}} if there is no pair 
\[
i\in\mathcal I_{E}^{n}\text{ and }j\in \mathcal I_{E}^{n-1}
\]
such that \(G/H_i\times G/H_j\) contains a free summand.
\end{definition}

This generalized isotropic condition allows us to have other ways to check homological purity.

\begin{theorem}
Let \(E\) be a \(G\) spectrum that admits a filtration such that \(gr(E)\) is homologically pure and generalized isotropic. Then \(E\) is homologically pure and generalized isotropic.
\end{theorem}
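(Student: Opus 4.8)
The plan is to smash the given filtration $F_{\bullet}E$ with $H\mZ$ and show that the resulting filtered $H\mZ$-module splits as the wedge of its associated graded pieces. Since that wedge is $H\mZ\wedge gr(E)$, this exhibits $\mZ\wedge E\simeq\mZ\wedge gr(E)$ and hence $E$ as homologically pure; moreover we may then take $\mathcal I_{E}=\mathcal I_{gr(E)}$, so the generalized isotropy condition for $E$ is literally the one assumed for $gr(E)$. Recall that homological purity is a statement about $\mZ$, so there is no need to pass to other zero-slices.

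First I would run an induction over the filtration. Writing $gr_{s}E=F_{s}E/F_{s-1}E$ (a wedge of regular slice spheres, by homological purity of $gr(E)$), I would show by induction that $H\mZ\wedge F_{s}E\simeq H\mZ\wedge\bigvee_{s'\le s}gr_{s'}E$, compatibly with the maps in the tower, and then pass to the colimit over $s$. The inductive step is to split the cofiber sequence
\[
H\mZ\wedge F_{s-1}E\to H\mZ\wedge F_{s}E\to H\mZ\wedge gr_{s}E,
\]
equivalently to show that the connecting map $H\mZ\wedge\delta_{s}\colon H\mZ\wedge gr_{s}E\to\Sigma H\mZ\wedge F_{s-1}E$ is null; the required nullhomotopy can be chosen so as to extend the splitting already produced for $F_{s-1}E$, which makes the construction compatible with the tower.

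The heart of the argument is the computation of the obstruction group. Using the inductive identification of $H\mZ\wedge F_{s-1}E$ as a wedge of the $H\mZ\wedge G_{+}\smashover{K}S^{h\rho_{K}}$, together with the compactness of each $H\mZ\wedge G_{+}\smashover{H}S^{k\rho_{H}}$, it is enough to check that every component
\[
G_{+}\smashover{H}S^{k\rho_{H}}\wedge H\mZ\longrightarrow \Sigma\,G_{+}\smashover{K}S^{h\rho_{K}}\wedge H\mZ
\]
vanishes, where $(k,H)\in\mathcal I_{gr_{s}E}$ and $(h,K)\in\mathcal I_{gr_{s'}E}$ for some $s'<s$. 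By the module--spectrum adjunction together with the induction--restriction adjunction, this group of maps is $\m{H}_{k\rho_{H}-1}\bigl(i_{H}^{\ast}(G_{+}\smashover{K}S^{h\rho_{K}});\mZ\bigr)(H/H)$. Now the double coset formula presents $i_{H}^{\ast}(G_{+}\smashover{K}S^{h\rho_{K}})$ as a homologically pure $H$-spectrum whose cells are regular slice spheres $H_{+}\smashover{H\cap gKg^{-1}}S^{\ast}$ indexed by $g\in H\backslash G/K$, each in integer dimension $h|K|$. Feeding this into the off-by-one formula of Proposition~\ref{prop:HomologyofHomologicallyPure} shows that the group is nonzero only if $h|K|=k|H|-1$ and $H\cap gKg^{-1}=\{e\}$ for some $g$ --- that is, only if the two cells lie in consecutive integer dimensions and $G/H\times G/K$ contains a free summand. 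Since $(k,H)$ and $(h,K)$ both belong to $\mathcal I_{gr(E)}=\coprod_{s}\mathcal I_{gr_{s}E}$, this configuration is precisely the one forbidden by the generalized isotropy of $gr(E)$. Hence every component, and therefore $H\mZ\wedge\delta_{s}$, is null, the induction goes through, and $\mZ\wedge E\simeq\mZ\wedge gr(E)$.

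I expect the only genuine work to be the bookkeeping inside that obstruction computation: writing the restriction of an induced regular slice sphere as a wedge of regular $H$-slice spheres over the double coset set, tracking their integer dimensions against the degree $k\rho_{H}-1$, and matching the resulting free--orbit condition with the definition of generalized isotropy. Everything else --- the splitting of the cofiber sequences, the compatible passage to the colimit, and finally reading off that $E$ inherits homological purity and generalized isotropy with indexing set $\mathcal I_{E}=\mathcal I_{gr(E)}$ --- is formal.
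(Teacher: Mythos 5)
Your argument is correct and rests on the same key computation as the paper's: the only possible obstructions are classes in \(\m{H}_{k\rho_{H}-1}\) of an induced regular slice sphere sitting in dimension \(k|H|-1\), and by the off-by-one case of Proposition~\ref{prop:HomologyofHomologicallyPure} these vanish unless \(G/H\times G/K\) contains a free summand --- precisely the configuration that generalized isotropy forbids. The packaging differs: the paper runs the spectral sequence of the filtration, observes that the generators are permanent cycles for exactly this reason, and then asserts that freeness rules out extensions; you instead split the tower one stage at a time by showing each connecting map \(H\mZ\wedge gr_{s}E\to\Sigma H\mZ\wedge F_{s-1}E\) is null. Your route buys something real: homological purity is defined as an equivalence of \(H\mZ\)-modules, and the collapse-plus-no-extensions argument only directly yields the homotopy Mackey functors, leaving implicit the step of realizing the free generators by actual maps \(G_{+}\smashover{H}S^{k\rho_{H}}\to H\mZ\wedge E\); your inductive choice of compatible sections produces the module-level splitting explicitly. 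The only hypotheses to spell out are that the filtration is exhaustive (so the stagewise splittings pass to the colimit) and the compactness of each \(H\mZ\wedge G_{+}\smashover{H}S^{k\rho_{H}}\) as an \(H\mZ\)-module, so that maps into the possibly infinite wedge \(\Sigma H\mZ\wedge\bigvee_{s'<s}gr_{s'}E\) are computed summandwise --- both of which you already flag.
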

\begin{proof}
The filtration on \(E\) gives a spectral sequence with \(E_{1}\)-term
\[
\pi_{\mstar}\big(gr(E)\wedge H\mZ\big)\cong H_{\mstar}\big(gr(E);\mZ\big).
\]
By assumption, this is a free \(H\mZ_{\mstar}\)-module, and the generators are in dimensions \(k_i\rho_{H_i}\) for \(i\in\mathcal I_{gr(E)}\). The generalized isotropic condition guarantees that these classes are permanent cycles, since there are no possible targets for the differentials on the generators by 
Proposition~\ref{prop:HomologyofHomologicallyPure}. Thus \(E_{1}=E_{\infty}\), and since this is a free module, there are no possible extensions. 
\end{proof}

The same proof applies more generally to deduce \(R\)-freeness for pure and isotropic \(R\).  More generally, we also deduce nice properties for the \(R\)-homology of a homologically pure \(E\) for any \(R\) which is pure, provided we have the same kind of generalized isotropy.

\begin{definition}
A \(G\)-spectrum \(R\) is {\defemph{weakly pure}} if 
for each \(n\in\mathbb Z\), there is a set \(\mathcal I_{n}\) and for each \(i\in\mathcal I_{n}\), a subgroup \(H_{i}\) and a zero-slice \(\mM_{i}\) for \(H_{i}\) such that
the regular \(n\)-slice of \(R\) is
\[
P_{n}^{n}R\simeq \bigvee_{i\in\mathcal I_{n}} G_{+}\smashover{H_{i}}\Big(S^{\tfrac{n}{|H|}\rho_{H}}\wedge H\mM_{i}\Big).
\]
\end{definition}

\begin{remark}
	If a \(G\)-spectrum \(R\) is weakly pure, then the regular slice filtration of \(R\) is the same as the classical slice filtration of \(R\). This is because for each \(n\), the fiber of \(P^nR\to P^{n-1}R\) is also a classical \(n\)-slice. By \cite[Proposition~4.45]{HHR}, this must be the classical slice tower.
\end{remark}

\begin{theorem}
Let \(R\) be a weakly pure \(G\)-ring spectrum that is slice \(0\)-connective, and let \(E\) be a homologically pure spectrum such that \(\mathcal I_E^n\) is empty for \(n\) sufficiently negative. 

If for each \(j\in \mathcal I_{E}^{n}\), there is no \(k\in\mathbb Z\), \(i_E\in\mathcal I_{E}^{n-1-k}\), and \(i_R\in\mathcal I_{k}\) such that 
\[
G/H_j\times G/H_{i_E}\times G/H_{i_R}
\] 
contains a trivial summand, then 
\[
R_{\mstar}E\cong \bigoplus_{j\in\mathcal I_{E}} R_{\mstar}\big(G_{+}\smashover{H_j}S^{k_j\rho_{H_j}}\big).
\]
\end{theorem}
\begin{proof}
The slice filtration of \(R\) gives a filtration on \(E\wedge R\) with associated graded
\[
\bigvee_{n\in\mathbb Z} \left(\bigvee_{i\in\mathcal I_{n}} G_{+}\smashover{H_{i}}S^{\tfrac{n}{|H_{i}|}\rho_{H_{i}}}\wedge H\mM_{n,i}\right)\wedge E.
\]
Since \(E\) is homologically pure, this is equivalent to
\[
\bigvee_{j\in\mathcal I_{E}}\bigvee_{n\in\mathbb Z} \left(\bigvee_{i\in\mathcal I_{n}} G_{+}\smashover{H_{i}}S^{\tfrac{n}{|H_{i}|}\rho_{H_{i}}}\wedge H\mM_{n,i}\wedge G_{+}\smashover{H_j}S^{k_j\rho_{H_j}}\right).
\]
The \(E_{2}\)-term of the associated spectral sequence is
\[
E_{2}(R\wedge E)\cong E_{2}(R)\Boxover{H_{\mstar}} H_{\mstar}(E;\mZ),
\]
which is a free module over the \(E_{2}\)-term for \(R\) with a basis given by a basis for the homology of \(E\). Our assumption on the lack of free summands guarantees that there are no possible targets for differentials on the basis elements, since the corresponding group of possible targets vanishes. 

This gives us a map of \(R\)-modules:
\[
R\wedge \bigvee_{j\in\mathcal I_{E}} G_{+}\smashover{H_j}S^{k_j\rho_{H_j}}\to R\wedge E.
\]
By construction, this induces a map of filtered spectra, and hence a map of spectral sequences. This map is an isomorphism on \(E_2\), which implies that the map is a weak equivalence, since our assumptions on \(R\) and \(E\) guarantee that slice spectral sequence converges strongly. 
\end{proof}

\subsubsection{Cohomology}
We can make similar statements about the cohomology.

\begin{proposition}
If \(E\) is homologically pure and \(|\mathcal I_E^{n}|<\infty\) for all \(n\), then for any zero-slice \(\mM\), we have an equivalence of \(H\mZ\)-modules
\[
F(E,H\mM)\simeq H\mM\wedge \bigvee_{i\in\mathcal I_E} G_+\smashover{H_i}S^{-k_i\rho_{H_i}}.
\]
\end{proposition}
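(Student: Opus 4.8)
The plan is to run the defining $H\mZ$-module splitting of $E$ through the free--forgetful adjunction and the Wirthm\"uller isomorphism, and then to use the hypothesis $|\mathcal I_E^n|<\infty$ only at the very end, to convert an infinite product back into an infinite wedge via a degreewise connectivity count.

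First, since every zero-slice $\mM$ is a module over $\mZ$, the spectrum $H\mM$ is an $H\mZ$-module, so
\[
F(E,H\mM)\simeq F_{H\mZ}(H\mZ\wedge E,H\mM).
\]
Next I would invoke homological purity. Because $H\mZ\wedge(-)$ preserves wedges, it gives an equivalence of $H\mZ$-modules
\[
H\mZ\wedge E\simeq\bigvee_{(k,H)\in\mathcal I_E}H\mZ\wedge\big(G_+\smashover{H}S^{k\rho_H}\big).
\]
Since a function spectrum out of a coproduct is the product of the function spectra, and since $F_{H\mZ}(H\mZ\wedge X,H\mM)\simeq F(X,H\mM)$ for any $G$-spectrum $X$, we get
\[
F(E,H\mM)\simeq\prod_{(k,H)\in\mathcal I_E}F\big(G_+\smashover{H}S^{k\rho_H},H\mM\big).
\]
Applying the Wirthm\"uller isomorphism $F(G_+\smashover{H}S^{V},M)\simeq G_+\smashover{H}S^{-V}\wedge M$ (already used in the proof of the universal coefficients theorem above) to each factor, with $V=k\rho_H$ and $M=H\mM$, identifies each factor with $G_+\smashover{H}S^{-k\rho_H}\wedge H\mM$, so
\[
F(E,H\mM)\simeq\prod_{(k,H)\in\mathcal I_E}G_+\smashover{H}S^{-k\rho_H}\wedge H\mM.
\]

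The remaining step, and the only place the finiteness hypothesis enters, is to show the canonical map from $\bigvee_{(k,H)}G_+\smashover{H}S^{-k\rho_H}\wedge H\mM$ to this product is an equivalence; once this is known, smash commutes with the wedge and we may pull $H\mM$ out to obtain the asserted form $H\mM\wedge\bigvee_{(k,H)}G_+\smashover{H}S^{-k\rho_H}$. Since $\m{\pi}_j$ of a wedge of spectra is the direct sum of the $\m{\pi}_j$'s, the wedge-to-product map is an equivalence as soon as, for each integer $j$, only finitely many of the summands $G_+\smashover{H}S^{-k\rho_H}\wedge H\mM$ have nonzero $\m{\pi}_j$. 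Here I would use that $\rho_H$ contains exactly one trivial summand, so $k\rho_H$ decomposes as a $k$-dimensional trivial summand plus $k$ copies of the reduced regular representation; hence $S^{k\rho_H}$ is a finite complex with cells only in dimensions between $k$ and $k|H|$ (and symmetrically for $k<0$). Dualizing, $S^{-k\rho_H}$ is a finite complex with cells only in dimensions between $-k|H|$ and $-k$, and smashing with the Eilenberg--Mac Lane spectrum $H\mM$ produces a spectrum whose homotopy Mackey functors are supported in exactly that band of integer degrees. Consequently $\m{\pi}_j\big(G_+\smashover{H}S^{-k\rho_H}\wedge H\mM\big)$ can be nonzero only when $j$ lies between $-k$ and $-k|H|$, which for fixed $j$ forces $k$ into a bounded interval (and $H$ into the finite set of subgroups of $G$); the integers $k|H|$ that arise then form a finite set $\mathcal N_j$, and the number of contributing summands is $\sum_{n\in\mathcal N_j}|\mathcal I_E^n|$, which is finite by hypothesis. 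Everything in sight is $H\mZ$-linear because $H\mZ$ is a commutative ring spectrum, so the resulting equivalence is one of $H\mZ$-modules.

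The main obstacle is the last step: one must be careful that the (co)homology of the negative regular slice sphere $S^{-k\rho_H}\wedge H\mM$ really is confined to the band $[-k|H|,-k]$ and does not spread up toward degree $0$. This is precisely the feature that makes regular slice spheres so tractable --- it comes from the $k$-fold trivial summand in $k\rho_H$ (equivalently, from $S^{k\rho_H}$ being $(k-1)$-connected) --- and it is exactly what makes the degreewise hypothesis $|\mathcal I_E^n|<\infty$, rather than a global finiteness or boundedness hypothesis on $\mathcal I_E$, sufficient to pass from the product back to the wedge.
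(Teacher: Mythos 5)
Your argument is correct and follows the paper's proof essentially step for step: the free--forgetful adjunction $F(E,H\mM)\simeq F_{H\mZ}(H\mZ\wedge E,H\mM)$, the purity splitting, Wirthm\"uller duality on each orbit summand, and then the conversion of the product back to a wedge using that each $G_+\smashover{H}S^{-k\rho_H}\wedge H\mM$ has homotopy concentrated in the band $[-k|H|,-k]$ together with the degreewise finiteness of $\mathcal I_E^{n}$. The only cosmetic difference is that the paper organizes the product as $\prod_n\prod_{(k,H)\in\mathcal I_E^n}$ and handles the two finiteness issues separately, whereas you fold them into a single degreewise count; the content is the same.
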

\begin{proof}
Since zero-slices are \(H\mZ\)-modules, we have an equivalance of \(H\mZ\)-modules
\[
F(E,H\mM)\simeq F_{H\mZ}(H\mZ\wedge E,H\mM).
\]
The homological purity of \(E\) gives an equivalence of \(H\mZ\)-modules
\[
H\mZ\wedge E\simeq H\mZ\wedge\bigvee_{i\in\mathcal I_E} G_+\smashover{H_i}S^{k_i\rho_{H_i}},
\]
and hence we have
\[
F(E,H\mM)\simeq\prod_{n}\prod_{i\in\mathcal I_{E}^{n}} G_{+}\smashover{H_i}S^{-k_i\rho_{H_i}}\wedge H\mM.
\]
Since \(\mathcal I_{E}^{n}\) is finite, the inner most products are the same as wedges. Since for all integers \(k\) and subgroups \(H\), the homotopy Mackey functors of 
\[
G_{+}\smashover{H}S^{-k\rho_{H}}\wedge H\mM
\] 
are zero outside of a finite range (depending only on \(k\) and \(H\)), the outer most product is also equivalent to the wedge.
\end{proof}

\begin{example}
An  theorem of Pitsch--Ricka--Schrerer shows that any conjugation space of Hausman--Holm--Puppe \cite{HHP} are ``mod \(2\)'' homologically pure and isotropic \cite{PRS}. This gives a large class of examples.
\end{example}

\subsection{Consequences in computations}
The condition of homological purity gives surprising computational control.
\subsubsection{Green functor structure}
\begin{theorem}
Let \(E\) be a homologically pure spectrum, and assume that \(E\) comes equipped with a [commutative, associative] multiplication in the homotopy category. Then for any commutative Green functor \(\mR\) which is a zero-slice, the multiplication on 
\[
{H}_{\mstar}(E;\mR)
\]
is completely determined by the restrictions to 
\[
H_\ast(i_e^\ast E;\mR(G)).
\]
\end{theorem}
\begin{proof}
The homological purity of \(E\) guarantees that the homology and cohomology are free modules over the \(\RO\)-graded homology of a point. In particular, the ring structure is completely determined by the products of basis vectors. These occur in dimensions of the form \(k\rho_H\) for various \(k\) and \(H\). If \(x\in H_{k\rho_H}(E;\mZ)\) and \(y\in H_{\ell\rho_J}(E;\mZ)\), then the product of \(x\) and \(y\) is represented by a map out of
\[
(G_+\smashover{H}S^{k\rho_H})\wedge (G_+\smashover{J}S^{\ell\rho_J}).
\]
This is a wedge of spaces of the form 
\[
G_+\smashover{K}S^{m\rho_K},
\]
where \(K\) ranges over all subgroups of the form \(H\cap gJg^{-1}\) and where 
\[
m\rho_K=i_{K}^\ast (k\rho_H)+i_K^\ast c_g^\ast (\ell\rho_J).
\]
In particular, this is a wedge of regular slice spheres, again, and hence the product takes values in a zero-slice by Proposition~\ref{prop:HomologyofHomologicallyPure}. Since all restriction maps are injections here, the result follows.
\end{proof}

\begin{corollary}
If \(E\) is a homologically pure spectrum, then the \(\RO\)-graded ring structure on the cohomology of \(E\) with coefficients in any commutative Green zero-slice is functorially determined by the underlying cohomology ring.
\end{corollary}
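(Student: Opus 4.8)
**Proof proposal for the Corollary (functorial determination of the $\RO$-graded cohomology ring of a homologically pure spectrum).**

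The plan is to dualize the preceding theorem on the homology ring structure, using the Universal Coefficients-type statement available for homologically pure spectra with finite $\mathcal{I}_E^n$. First I would reduce to the case of spectra whose indexing set has finite intersection with each slice strip, since the general case follows by passing to the colimit over the filtration by such subspectra (and $\RO$-graded cohomology commutes with the relevant limits). For such $E$ and any commutative Green zero-slice $\mR$, the proposition of the previous subsection gives an equivalence of $H\mZ$-modules $F(E, H\mR) \simeq H\mR \wedge \bigvee_{(k,H)\in\mathcal{I}_E} G_+\smashover{H} S^{-k\rho_H}$, so the cohomology is a free module over the $\RO$-graded cohomology of a point on a basis dual to the homology basis, sitting in dimensions $-k\rho_H$.

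Next I would identify the multiplication on this free module. The ring structure on $H^{\mstar}(E;\mR)$ is the one induced by the diagonal $E \to E \wedge E$ (using that $E$ is a coalgebra, or dually using the multiplication on $H\mR$ together with the coproduct in homology); either way the product of two basis elements dual to classes in $H_{k\rho_H}$ and $H_{\ell\rho_J}$ is computed by the same analysis as in the proof of the homology theorem: the relevant smash product $(G_+\smashover{H} S^{-k\rho_H})\wedge(G_+\smashover{J} S^{-\ell\rho_J})$ splits as a wedge of $G_+\smashover{K} S^{-m\rho_K}$ over the double coset data, with $m\rho_K = i_K^{\ast}(k\rho_H) + i_K^{\ast}c_g^{\ast}(\ell\rho_J)$, so every structure constant lives in $\m{H}_{\mstar}$ of a wedge of regular slice spheres. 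By Proposition~\ref{prop:HomologyofHomologicallyPure} all the restriction maps in those groups are injections, so the structure constants — hence the entire ring structure — are detected on the underlying cohomology ring $H^\ast(i_e^\ast E; \mR(G))$.

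Finally, functoriality: a map $E \to E'$ of homologically pure spectra with multiplication (or the ambient naturality in $\mR$) induces a map of free modules compatible with restrictions, and since everything is determined by the injective restriction to the underlying ring and the underlying map is a ring map, the induced map on $\RO$-graded cohomology is a ring map determined by the underlying one. The main obstacle I anticipate is bookkeeping the signs and the indexing in the double coset splitting of the dual spheres — the degree-of-the-sphere-map subtlety flagged in the remark after the transfer proposition reappears here — but this is exactly parallel to the homology computation already carried out, so no genuinely new difficulty arises; the only real content is the injectivity of restriction maps for regular slice spheres, which is already in hand.
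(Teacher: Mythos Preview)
Your proposal is correct and matches the paper's intent: the paper gives no explicit proof for this corollary, treating it as immediate from the preceding homology theorem by duality, and your argument is precisely that dualization carried out with care. The essential content---freeness of the cohomology as a module, products of dual basis vectors landing in regular-slice-sphere degrees via the same double-coset splitting, and injectivity of restrictions for zero-slices---is exactly the paper's implicit reasoning.
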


\subsubsection{Tambara functor structure}
If, moreover, \(E\) is a \(G\)-\(E_\infty\)-ring spectrum, then we also have good control over norms.

\begin{theorem}
If \(E\) is a \(G\)-commutative monoid in the homotopy category and if \(E\) is homologically pure, then for any Tambara zero-slice \(\mR\), we have that the norms in 
\[
H_\mstar(E;\mR)
\]
are determined by the formula
\[
i_e^\ast N_H^G(x)=\prod_{\gamma\in G/H}\gamma(i_{e}^{\ast}x).
\]
\end{theorem}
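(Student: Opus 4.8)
The plan is to reduce the norm to the underlying, non-equivariant homology, exploiting that for a homologically pure spectrum restriction to the trivial group is faithful, and combining this with the standard diagonal description of the restriction of a norm.

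First I would recall why there is a norm on $H_\mstar(E;\mR)$ at all. Since $\mR$ is a Tambara zero-slice, Ullman's theorem \cite{Ullman} makes $H\mR$ into a $G$-$E_\infty$-ring spectrum, so there is a multiplicative structure map $N_H^G i_H^\ast H\mR\to H\mR$. Given $x$ represented by $S^V\to i_H^\ast(H\mR\wedge E)$, one forms $N_H^G x$ as the composite
\[
S^{\ind_H^G V}\simeq N_H^G i_H^\ast S^V\to N_H^G i_H^\ast(H\mR\wedge E)\simeq N_H^G i_H^\ast H\mR\wedge N_H^G i_H^\ast E\to H\mR\wedge E,
\]
the last map using the above structure map together with the $G$-commutative monoid structure $N_H^G i_H^\ast E\to E$; here I use that $N_H^G$ is strong symmetric monoidal. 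Since $\ind_H^G$ carries regular representations to regular representations, on the regular-slice basis $N_H^G x$ again sits in a regular representation degree.

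Next I would check that the restriction $H_\mstar(E;\mR)\to H_\ast(i_e^\ast E;\mR(G))$ is a monomorphism. As $\mR$ is a zero-slice, all its restriction maps are injective; homological purity presents $H\mR\wedge E$ as a wedge of the $H\mR\wedge(G_+\smashover{H}S^{k\rho_H})$, on each wedge summand of which restriction is injective by (the proof of) Proposition~\ref{prop:HomologyofHomologicallyPure}, hence it is injective on the wedge. Thus $N_H^G(x)$ is determined once we know $i_e^\ast N_H^G(x)$, and for that I would invoke the diagonal formula for the restriction of a norm: for any $H$-spectrum $Y$ there is a natural equivalence $i_e^\ast N_H^G Y\simeq \bigwedge_{\gamma\in G/H} c_\gamma^\ast i_e^\ast Y$, the smash over a set of coset representatives, with the conjugation actions built in \cite{HHR}. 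Applying $i_e^\ast$ to the composite defining $N_H^G x$ and chasing naturality, $i_e^\ast N_H^G(x)$ is identified with the iterated smash $\bigwedge_{\gamma} c_\gamma^\ast i_e^\ast x$ followed by the iterated multiplication of the ring spectrum $H(\mR(G))\wedge i_e^\ast E$; unwinding, the $\gamma$-th factor contributes the conjugate $\gamma(i_e^\ast x)$ and the last map multiplies them, giving $i_e^\ast N_H^G(x)=\prod_{\gamma\in G/H}\gamma(i_e^\ast x)$. Together with the injectivity this determines $N_H^G(x)$, and then, combined with the products from the previous theorem and the standard restrictions, transfers and sums, the whole Tambara structure on $H_\mstar(E;\mR)$ is seen to be functorially determined by the underlying homology ring.

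The hard part will be the step identifying $i_e^\ast N_H^G$ with ``smash the conjugates, then multiply'' --- that is, checking that the diagonal formula for the restriction of the norm is compatible with the two $G$-$E_\infty$ multiplications and that all the conjugation actions are accounted for correctly. This is essentially the restriction-of-the-norm computation of the appendix of \cite{HHR}; once it is in hand everything else is bookkeeping, and in particular no finiteness hypothesis on $E$ is needed since we only ever restrict to a point.
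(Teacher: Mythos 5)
Your proposal is correct and follows essentially the same route as the paper: the paper's proof is a one-line reduction to the argument for products, using that regular representations are closed under induction (so the norm of a basis class lands in a regular representation degree, where by Proposition~\ref{prop:HomologyofHomologicallyPure} the homology is a sum of zero-slices and restriction to the trivial group is injective), after which the formula $i_e^\ast N_H^G(x)=\prod_{\gamma\in G/H}\gamma(i_e^\ast x)$ is the standard restriction-of-the-norm identity. You have simply written out in detail the steps the paper leaves implicit.
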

\begin{proof}
The proof is the same as for the products. Here we use that the collection of regular representations is a sub-semi-Mackey functor of the representation ring.
\end{proof}

\begin{remark}
Tambara functors which are also zero-slices were independently studied by Nakaoka, who called these ``MRC'' Tambara functors, in his study of localizations of Tambara functors \cite{NakaokaInvert}.
\end{remark}

\subsubsection{CoTambara structure}
Again, all of the desired structure can be read out of the underlying homology. The conorm maps are detected as twisted coproducts. The proofs are identical.

\begin{theorem}
Let \(E\) be a homologically pure spectrum, and assume that \(E\) comes equipped with a [cocommutative, coassociative] comultiplication in the homotopy category. Then for any commutative Green functor \(\mR\) which is a zero-slice, the comultiplication on 
\[
{H}_{\mstar}(E;\mR)
\]
is completely determined by
\[
H_\ast\big(i_e^\ast E;\mR(G)\big).
\]
\end{theorem}

\begin{theorem}
If \(E\) is a \(G\)-co-commutative comonoid in the homotopy category and if \(E\) is homologically pure, then for any Tambara zero-slice \(\mR\), we have that the conorms in 
\[
H_\mstar(E;\mR)
\]
are determined by the formula
\[
i_e^\ast N_H^G(x)=\Big(\big(\bigotimes_{\gamma\in G/H}\gamma\big)\circ\psi\Big)(i_{e}^\ast x).
\]
\end{theorem}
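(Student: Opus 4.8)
The plan is to mimic exactly the proof strategy used for the Tambara functor (norm) structure theorem stated just above, replacing the smash product in spectra with the twisted Cartesian product of spaces, and replacing ``multiplication in the homotopy category'' by ``co-multiplication in the homotopy category.'' The only genuinely new ingredient is that here we are dealing with a co-commutative comonoid, so the relevant maps go from $H\mR_\mstar(X)$ to the box product, and the conorms are detected via the twisted coproducts $\psi_f\colon H\mR_\mstar(\Map(T,X)) \to H\mR_\mstar(\Map(S,X))$ of Definition~\ref{def:...} (the conorm map). The underlying slogan, made precise in Proposition~\ref{prop:HomologyofHomologicallyPure}, is that for a homologically pure spectrum all restriction maps to the underlying homology are injective, so any structure map is forced once we know its effect on $H_\ast(i_e^\ast X;\mR(G))$.

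First I would record that since $X$ is homologically pure, so are all of its restrictions $i_K^\ast X$ and, by Proposition~\ref{prop:NormsofThom} together with the closure statements, all of the spaces $\Map(T,X)$ and in particular the various $\Map^H(G,X)=\Map(G/H,X)$; hence every group appearing in the source and target of a conorm map is again homologically pure, so its $\RO$-graded homology with $\mR$-coefficients is a free module on generators in regular-representation dimensions $k\rho_H$ and all restriction maps are injective. Second, I would observe that a conorm map $N^T H\mR_\mstar(X)\to N^S H\mR_\mstar(X)$ is, by construction, induced by a map of spectra $H\mR\wedge M(V)\to H\mR\wedge M(W)$ between wedges of regular slice spheres smashed with $H\mR$; restricting to $i_e^\ast$, this becomes a map of free $H\mR(G)_\ast$-modules, and the twisted coproduct formula for $\Map(G/H,X)$ identifies the underlying map with $\big(\bigotimes_{\gamma\in G/H}\gamma\big)\circ\psi$ applied to $i_e^\ast x$, exactly as in the proof for ordinary norms once one notes (as there) that the regular representations form a sub-semi-Mackey functor of the representation ring so that the source and target of the conorm remain wedges of regular slice spheres. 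Third, injectivity of the restriction maps then upgrades this identification of underlying maps to an identification of the whole conorm, giving the stated formula.

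In more detail, the key steps in order are: (i) invoke homological purity of all relevant $\Map(T,X)$ and the resulting injectivity of restrictions, citing Proposition~\ref{prop:HomologyofHomologicallyPure}; (ii) reduce the problem to computing $i_e^\ast$ of a conorm, using that a map of free modules over $\mR(G)_\ast$ between homologically pure homologies is detected on the underlying fibre; (iii) identify, at the level of underlying homology, the conorm associated to the canonical map $G/H\amalg\cdots \to G/G$ (or the relevant fold-type map) with the composite $\big(\bigotimes_{\gamma\in G/H}\gamma\big)\circ\psi$ by tracing through the explicit description of the homology of $\Map(G/H,X)$ as $N_e^G$ applied to $i_e^\ast H\mR_\mstar(X)$ and the way $\psi$ (the underlying co-multiplication, iterated) feeds into it; (iv) conclude. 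The main obstacle I anticipate is purely bookkeeping: matching the twisted coproduct $\psi_f$ for a general $f\colon S\to T$ to an honest iterated underlying co-multiplication composed with a permutation of tensor factors, keeping the Weyl-group/conjugation actions straight across the double-coset decomposition $G/H\times G/J\cong\coprod_{g} G/(H\cap gJg^{-1})$ — but this is the \emph{same} bookkeeping already carried out for the product and norm theorems, so the proof really is ``the same,'' as the text asserts, and I would simply say so after setting up (i)--(iv).

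\begin{proof}
Since $X$ is homologically pure, so is $i_K^\ast X$ for every $K\subseteq G$, and hence by Proposition~\ref{prop:NormsofThom} and the closure of homological purity under induction, restriction, and norms, every space of the form $\Map(T,X)$ with $T$ a finite $G$-set is homologically pure as well. By Proposition~\ref{prop:HomologyofHomologicallyPure}, the $\RO$-graded homology $H_\mstar\big(\Map(T,X);\mR\big)$ is therefore a free $H\mR_\mstar(S^0)$-module concentrated in regular-representation dimensions, and all of its restriction maps are injective. In particular a conorm map
\[
N^{T}H_\mstar(E;\mR)\to N^{S}H_\mstar(E;\mR)
\]
(with $E$ our homologically pure comonoid, playing the role of $X$ above) is a map of free modules whose source and target are the $\RO$-graded homologies of homologically pure spectra, so it is detected by its restriction to the underlying homology $H_\ast(i_e^\ast(-);\mR(G))$.

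It thus suffices to compute $i_e^\ast$ of the conorm. A conorm is induced by the co-commutative comultiplication and the conorm maps $E\xrightarrow{\Delta^{G/H}}\Map^{H}(G,E)$; smashing with $H\mR$, it becomes a map between wedges of regular slice spheres smashed with $H\mR$, using that the collection of regular representations is a sub-semi-Mackey functor of the representation ring (exactly as in the proof of the norm formula for the Tambara structure). Restricting to $i_e^\ast$ and using the identification $H_\mstar\big(\Map(G/H,E);\mR\big)\cong N_e^G\big(i_e^\ast H_\mstar(E;\mR)\big)$ of Theorem~\ref{thm:SymMonoidalFreeForSpaces}, the twisted coproduct $\psi$ on the space of maps is identified on underlying homology with the iterate of the ordinary underlying comultiplication composed with the permutation of tensor factors indexed by $G/H$; that is,
\[
i_e^\ast N_H^G(x)=\Big(\big(\textstyle\bigotimes_{\gamma\in G/H}\gamma\big)\circ\psi\Big)(i_e^\ast x).
\]
Since all restriction maps in $H_\mstar(E;\mR)$ and $H_\mstar\big(\Map(G/H,E);\mR\big)$ are injective, this formula on underlying homology determines the conorm completely, which is the claim. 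The compatibility with restrictions, conjugations, and iterated conorms follows, as before, from the double-coset decomposition $G/H\times G/J\cong\coprod_{g\in H\backslash G/J}G/(H\cap gJg^{-1})$ together with the same injectivity.
\end{proof}
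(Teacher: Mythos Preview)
Your proposal is correct and follows exactly the approach the paper intends: the paper's ``proof'' consists of the sentence ``The proofs are identical'' (to the Green functor and Tambara structure theorems just above), and you have simply spelled out what that identical argument is---homological purity forces the source and target of every conorm map to be free on regular slice spheres, the regular representations are closed under the relevant operations, and injectivity of restriction maps reduces everything to the underlying homology. If anything, you have supplied more detail than the paper does.
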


\subsubsection{{\DL} operations}
Finally, we restrict to \(C_2\). None of the arguments here are that specific to \(C_2\); the only issue is in defining the appropriate {\DL} operations. For groups which contain \(C_2\), norm arguments provide analogous classes, but the author has no idea in general. We recall Wilson's \(\RO(C_{2})\)-graded stable operations.

\begin{theorem}[{\cite[\S 3]{BehWil}, \cite{WilsonDL}}]
For each \(i\geq 0\) and for each \(\epsilon=0,1\), we have {\DL} operations
\[
Q^{i\rho_{2}-\epsilon}\colon H_{\star}(\mhyphen;\m{\F}_{2})\to H_{\star+i\rho_{2}-\epsilon}(\mhyphen;\m{\F}_{2}).
\]
When \(\star=i\rho_{2}\), \(Q^{i\rho_{2}}\) is the square.
\end{theorem}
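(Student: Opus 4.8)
We sketch the idea; full details are in \cite{WilsonDL} (see also \cite[\S 3]{BehWil}). The plan is to run the classical extended-power machinery for {\DL} operations, in the style of May's algebraic approach to Steenrod operations, inside genuine \(C_{2}\)-equivariant stable homotopy theory. The only input needed is that \(\m{\F}_{2}\) is a Tambara functor, so that \(H\m{\F}_{2}\) is a \(C_{2}\)-\(E_{\infty}\)-ring spectrum; equivalently, it carries compatible structure maps from the \(\Sigma_{2}\)-extended powers
\[
D_{2}X=(E\Sigma_{2})_{+}\smashover{\Sigma_{2}}X^{\wedge 2},
\]
where \(E\Sigma_{2}\) is the \emph{genuine} universal \(\Sigma_{2}\)-space, so that \(D_{2}\) incorporates the \(C_{2}\)-action and interacts correctly with the transfers. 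Given \(x\in H_{V}(E;\m{\F}_{2})\) represented by \(\alpha\colon S^{V}\to H\m{\F}_{2}\wedge E\), one forms \(D_{2}\alpha\colon D_{2}S^{V}\to D_{2}(H\m{\F}_{2}\wedge E)\) and uses the \(E_{\infty}\)-multiplication on \(H\m{\F}_{2}\) together with the diagonal (or comultiplication) of \(E\) to land back in \(H\m{\F}_{2}\wedge E\); the operation \(Q^{i\rho_{2}-\epsilon}(x)\) is then the image under \((D_{2}\alpha)_{\ast}\) of a distinguished basis class of \(H_{\star}(D_{2}S^{V};\m{\F}_{2})\).

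The real content is the identification of that basis, i.e.\ the computation of the \(\RO(C_{2})\)-graded \(H\m{\F}_{2}\)-homology of the universal examples \(D_{2}S^{k\rho_{2}}\); these are Thom \(C_{2}\)-spectra over the genuine classifying space \(B_{C_{2}}\Sigma_{2}\), for a representation bundle determined by \(k\rho_{2}\) and the tautological \(\Sigma_{2}\)-bundle. This is precisely the kind of object the methods of this paper are built for: \(B_{C_{2}}\Sigma_{2}\) and the relevant Thom spectra over it are representation-cell complexes, so by Kronholm's theorem \cite{Kronholm} they have free \(H\m{\F}_{2}\)-homology, and the computation then reduces through the underlying homology (using the Thom isomorphism) to the classical \(H_{\ast}(B\Sigma_{2};\F_{2})=\F_{2}[t]\). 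Unwinding the \(C_{2}\)-action, we expect a basis for \(H_{\star}(D_{2}S^{k\rho_{2}};\m{\F}_{2})\) whose degrees differ from that of the bottom class \(x^{2}\) (which sits in degree \(2k\rho_{2}\)) by nonnegative multiples of \(\rho_{2}\), and by such multiples minus \(1\); naming the generator in degree \(k\rho_{2}+i\rho_{2}-\epsilon\) by \(Q^{i\rho_{2}-\epsilon}(x)\), and then letting \(k\) vary and invoking stability, produces operations \(Q^{i\rho_{2}-\epsilon}\) for all \(i\geq 0\) and \(\epsilon\in\{0,1\}\). Restriction to the underlying spectrum sends \(i\rho_{2}-\epsilon\) to \(2i-\epsilon\), so these recover the classical \(Q^{2i}\) and \(Q^{2i-1}\); and the bottom class of \(D_{2}S^{k\rho_{2}}\) is, by construction, the image under the squaring map of the fundamental class of \((S^{k\rho_{2}})^{\wedge 2}=S^{2k\rho_{2}}\), which is exactly the statement that \(Q^{i\rho_{2}}\) is the square on \(H_{i\rho_{2}}\).

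Once this is in place, the remaining formal properties should follow exactly as in the nonequivariant case: additivity, because the cross term in \(D_{2}(x+y)\) is a transfer and so vanishes mod \(2\); the internal and external Cartan formulas, from the compatibility of \(D_{2}\) with smash products through the map \(B_{C_{2}}\Sigma_{2}\times B_{C_{2}}\Sigma_{2}\to B_{C_{2}}\Sigma_{2}\); stability under both \(\rho_{2}\)-fold and integer suspension; and, if one wants them, the Adem relations, from comparing \(D_{2}D_{2}\) with \(D_{4}\). The hard part is everything upstream of this: making the extended-power construction homotopically meaningful in the genuine \(C_{2}\)-universe — one must use the norm (\(N_{\infty}\)) refinement of the \(E_{\infty}\)-structure rather than a naive Borel model — and carrying out the genuine homology computation of \(D_{2}S^{k\rho_{2}}\) with its Thom twist precisely enough to pin down the indexing by pairs \((i,\epsilon)\). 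Both steps are done in \cite{WilsonDL}.
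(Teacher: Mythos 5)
The paper does not prove this statement at all: it is recalled verbatim from the cited sources (\cite[\S 3]{BehWil}, \cite{WilsonDL}), with no proof environment attached, so there is no internal argument to compare yours against. Your sketch is a faithful outline of what those references actually do --- genuine (norm-refined) extended powers \(D_{2}\), the identification of \(D_{2}S^{k\rho_{2}}\) as a Thom spectrum over the genuine \(B_{C_{2}}\Sigma_{2}\), operations indexed by a basis of its \(\RO(C_{2})\)-graded homology in degrees \(i\rho_{2}\) and \(i\rho_{2}-1\) above the bottom class, the bottom class giving the square, and restriction recovering the classical \(Q^{2i-\epsilon}\) --- and I see no step that would fail. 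The one place where you wave your hands more than you acknowledge is the appeal to Kronholm for the homology of \(B_{C_{2}}\Sigma_{2}\) and its Thom twists: freeness alone does not locate the generators in degrees of the form \(j\rho_{2}\) and \(j\rho_{2}-1\), which is exactly the content needed to get the stated indexing by pairs \((i,\epsilon)\); Wilson pins this down by a direct computation rather than by a general freeness theorem. Since you explicitly defer that computation (and the construction of the genuine \(D_{2}\)) to \cite{WilsonDL}, your proposal is at the same level of rigor as the paper's treatment, namely a citation with an accurate gloss.
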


In this case, homological purity says that the underlying structure describes everything.

\begin{theorem}\label{thm:DLAction}
If \(E\) is a homologically pure \(C_2\)-\(E_\infty\)-ring spectrum, then we have
\[
Q^{i\rho_2-\epsilon}\colon H_{\star}(E;\m{\F}_2)\to H_{\star+i\rho_2-\epsilon}(E;\m{\F}_2)
\]
is determined by the restrictions \(i_e^\ast Q^{i\rho_2-\epsilon}\). 
The ``odd'' operations \(Q^{i\rho_{2}-1}\) can only land in cells induced from the trivial group.
\end{theorem}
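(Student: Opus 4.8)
The plan is to reduce the equivariant Dyer--Lashof action on $H_\star(E;\m{\F}_2)$ to the classical Dyer--Lashof action on the underlying homology $H_\ast(i_e^\ast E;\F_2)$, exploiting that for a homologically pure spectrum the restriction map $i_e^\ast$ is injective in exactly the degrees in which these operations take values.

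First I would record the two inputs that make this work. By Proposition~\ref{prop:HomologyofHomologicallyPure}, homological purity forces $\m{H}_{k\rho_2}(E;\m{\F}_2)$ to be a sum of summands of the form $(\m{\F}_2)_{i_{C_2}^\ast C_2/J}$, so $i_e^\ast$ is injective in every degree $k\rho_2$; and it forces $\m{H}_{k\rho_2-1}(E;\m{\F}_2)$ to be carried entirely by summands induced from the trivial subgroup, on which $i_e^\ast$ is the diagonal $\F_2\hookrightarrow\F_2\oplus\F_2$ and hence again injective. Second, the operations $Q^{i\rho_2-\epsilon}$ of Behrens--Wilson and Wilson are natural for restriction, so $i_e^\ast Q^{i\rho_2-\epsilon}$ is a classical Dyer--Lashof operation (of homological degree $2i-\epsilon$) applied to $i_e^\ast$ of the class.

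Now take a class $x$ carried by a regular slice cell, so $x$ lives in a degree $k\rho_2$. When $\epsilon=0$ the element $Q^{i\rho_2}(x)$ lies in $\m{H}_{(k+i)\rho_2}(E;\m{\F}_2)$, a degree in which $i_e^\ast$ is injective; hence $Q^{i\rho_2}(x)$ is the unique class whose restriction is the classical operation applied to $i_e^\ast x$, so the operation is determined by its restriction, and it reduces to the square precisely when $i=k$, compatibly with the classical statement. When $\epsilon=1$ the element $Q^{i\rho_2-1}(x)$ lies in $\m{H}_{(k+i)\rho_2-1}(E;\m{\F}_2)$, which by Proposition~\ref{prop:HomologyofHomologicallyPure} is supported only on cells induced from $\{e\}$ --- this is the last sentence of the theorem --- and $i_e^\ast$ is still injective there, so again the operation is recovered from its restriction.

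It remains to propagate these identities from the regular slice ``fundamental'' classes to all of $\m{H}_\star(E;\m{\F}_2)$, which I would do using additivity, the Cartan formula, and Wilson's values of the operations on the coefficient classes $a_\sigma,u_\sigma\in\m{\pi}_\star H\m{\F}_2$. The step I expect to be the main obstacle is the behaviour on a class $y$ carried by an induced cell $C_{2+}\wedge S^k$: now the target degree $k+i\rho_2-\epsilon$ of $Q^{i\rho_2-\epsilon}(y)$ need not be of the form $m\rho_2$ or $m\rho_2-1$, so Proposition~\ref{prop:HomologyofHomologicallyPure} does not apply directly. The fix I have in mind is to use $a_\sigma y=0$ together with the Cartan formula to conclude that $Q^{i\rho_2-\epsilon}(y)$ is again $a_\sigma$-torsion, and then to check that in the degrees that actually arise the $a_\sigma$-torsion part of $\m{H}_\star(E;\m{\F}_2)$ is carried by cells induced from $\{e\}$, where $i_e^\ast$ is injective in every degree; the genuinely delicate point is ruling out the ``negative cone'' contributions of $\m{\pi}_\star H\m{\F}_2$ attached to the regular slice cells. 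Everything else is formal, and the argument for the comultiplication, norms, and products runs in exactly the same way.
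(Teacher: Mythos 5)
Your first two paragraphs reproduce what the paper's (one\nobreakdash-sentence) proof intends: for a class $x$ carried by a regular slice cell, i.e.\ in a degree $k\rho_2$, the target of $Q^{i\rho_2-\epsilon}$ lies in degree $(k+i)\rho_2-\epsilon$, and Proposition~\ref{prop:HomologyofHomologicallyPure} identifies the homology there as a sum of $(\m{\F}_2)_{C_2/J}$'s when $\epsilon=0$ and a sum of copies of $(\m{\F}_2)_{C_2}$ supported on cells induced from the trivial group when $\epsilon=1$; restriction is injective on both, so naturality of the operations under $i_e^\ast$ pins down the value, and the $\epsilon=1$ clause is exactly the second computation in that proposition. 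On this part you and the paper agree completely, and this is all the paper ever uses (in the $\BUR$ computation the spectrum is isotropic and the operations are only evaluated on the polynomial generators, which sit in degrees $j\rho_2$).

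Your last paragraph correctly flags the one place where the blanket statement needs more care --- classes $y$ carried by induced cells $C_{2+}\wedge S^{k}$, whose images land in degrees $k+i\rho_2-\epsilon$ to which Proposition~\ref{prop:HomologyofHomologicallyPure} does not apply --- but your proposed patch does not close the gap. Being $a_\sigma$-torsion does not force a class onto the induced summands: in the free $\m{\pi}_\star H\m{\F}_2$-module on a regular cell $S^{j\rho_2}$, the positive-cone classes $u_\sigma^m a_\sigma^n$ with $n>0$ restrict to zero yet are not killed by $a_\sigma$, and these (for $j$ in the range where $(k+2i-\epsilon-2j)\le 0\le k+i-\epsilon-j$) are precisely the potentially invisible contributions, not only the negative cone. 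Moreover the Cartan formula applied to $a_\sigma y=0$ yields a relation $\sum Q^{V'}(a_\sigma)Q^{V''}(y)=0$ rather than the statement that $Q^{i\rho_2-\epsilon}(y)$ is $a_\sigma$-torsion. A cleaner repair is to note that every top-level class on an induced summand is a transfer and to invoke the transfer (Frobenius) formula for the power operations, $Q^{V}(\tr z)=\tr\big(Q^{|V|}_{\mathrm{cl}}(z)\big)$, which both computes the value from the underlying operation and locates it; alternatively, one can simply state the theorem for the basis classes in cell dimensions, which is how it is applied. To be fair, the paper's own proof says nothing about this case either, so your proposal is no less complete than the original --- but the fix as you have sketched it would not survive being written out.
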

\begin{proof}
This again follows immediately from the assumption of homological purity.
\end{proof}

\subsection{Example: the homology of \(\mathbb CP^{\infty}\) and of \(\BUR\)}\label{ssec:Examples}
\subsubsection{The \(C_{2}\)-\(E_{\infty}\) space \(\mathbb CP^{\infty}\)}
The standard cell structure for \(\mathbb CP^{\infty}\) has a unique cell in dimension \(k\rho_{2}\) for all \(k\geq 0\) and no other cells. In particular, it is homologically pure and isotropic, with a basis given by all
\[
\bar{b}_{n}\in H_{n\rho_{2}}(\mathbb CP^{\infty};\mZ)
\]
corresponding to the top cell of \(\mathbb CP^{n}\). The ring and coring structure then follows immediately from the underlying case.

\begin{proposition}
As a Green Hopf algebra, the homology of \(\mathbb CP^{\infty}\) with coefficients in \(\mZ\) is a divided power algebra on the primitive class \(\bar{b}_{1}\).
\end{proposition}

The norm (and conorm) maps are also determined by the underlying condition. Here we have
\[
i_{C_{2}}^{\ast}\big(N_{e}^{C_{2}}b_{n}\big)=-b_{n}^{2}=-\binom{2n}{n} b_{2n}.
\]

\begin{proposition}
The norms are given by
\[
N_{e}^{C_{2}} b_{n}=-\binom{2n}{n}\bar{b}_{2n}.
\]
\end{proposition}

The conorms are dual to the (negative) squaring operation.

\subsubsection{The homology of \(\BUR\)}
We begin with the computation of the homology of \(\BUR\) with coefficients in \(\mZ\). We give a slightly different proof than that of \cite{Kahn} and \cite{PitSch}, using instead our formulae above. This line of argument was undoubtedly known by Araki and Landweber.

\begin{theorem}\label{thm:AssociativeRingVersion}
There are classes 
\[
\bar{a}_{i}\in H_{i\rho_{2}}(\BUR;\mZ)
\]
such that the induced map on \(A_{\infty}\)-rings
\[
H\mZ\wedge\bigwedge_{i\geq 1} \mathbb S^0[\bar{a}_i]=H\mZ\wedge\mathbb S^{0}[\bar{a}_{1},\bar{a}_{2},\dots]\to 
H\mZ\wedge \BUR
\]
is an equivalence of \(C_{2}\)-equivariant associative algebras, and hence the \(C_{2}\)-space \(\BUR\) is homologically pure and isotropic.
\end{theorem}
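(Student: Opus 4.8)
The plan is to write down explicit lifts of the classical polynomial generators, bundle them into a map of associative ring spectra out of a ``polynomial sphere,'' and verify that this map is an equivalence by testing on the underlying spectrum and on geometric fixed points; since the source is manifestly a wedge of regular slice spheres (all with stabilizer \(C_{2}\)) smashed with \(\mZ\), homological purity and isotropy of \(\BUR\) will follow at once. To produce the classes, use the Real line bundle \(j\colon \mathbb{CP}^{\infty}_{\mathbb R}=BU(1)_{\mathbb R}\to\BUR\). The space \(\mathbb{CP}^{\infty}_{\mathbb R}\) is a \(Rep(C_{2})\)-complex with a single cell \(D(i\rho_{2})\) in each ``dimension'' \(i\geq 0\), since the affine chart of \(\mathbb{CP}^{i}\) is the representation disk \(D(i\rho_{2})\); the \(i\)-th such cell yields a class \(\bar x_i\in H_{i\rho_{2}}(\mathbb{CP}^{\infty}_{\mathbb R};\mZ)\) restricting to a generator of \(H_{2i}(\mathbb{CP}^{\infty};\mathbb Z)\) (the relevant cellular boundary vanishes for degree reasons). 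Set \(\bar a_i=j_{\ast}(\bar x_i)\in H_{i\rho_{2}}(\BUR;\mZ)\).

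Because \(\BUR\) is an \(E_{\infty}\)- (in particular \(A_{\infty}\)-) object in \(C_{2}\)-spaces under Whitney sum, \(\Sigma^{\infty}_{+}\BUR\) is an \(A_{\infty}\)-ring spectrum, and the classes \(\bar a_i\) assemble into a map of \(A_{\infty}\)-ring spectra out of the polynomial sphere
\[
\mathbb{S}^{0}[\bar a_1,\bar a_2,\dots]=\bigvee_{m}S^{\|m\|},\qquad \|m\|=\Big(\sum_i i\,n_i\Big)\rho_{2}\ \ \text{for}\ \ m=\bar a_1^{n_1}\bar a_2^{n_2}\cdots .
\]
Smashing with \(\mZ\) gives the map \(\phi\colon\mZ\wedge\mathbb{S}^{0}[\bar a_1,\bar a_2,\dots]\to\mZ\wedge\BUR\) of the statement. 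Each summand of the source is \(C_{2+}\smashover{C_2}S^{k\rho_{2}}\), so if \(\phi\) is an equivalence then \(\mZ\wedge\BUR\) is \(\mZ\)-equivalent to a wedge of regular slice spheres with no trivial-stabilizer summands, i.e.\ \(\BUR\) is homologically pure and isotropic. It remains to check that \(\phi\) is an equivalence.

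A map of \(C_{2}\)-spectra is an equivalence precisely when it is one after \(i_{e}^{\ast}\) and after \(\Phi^{C_{2}}\). On underlying spectra \(\phi\) is the map \(\mathbb Z[b_1,b_2,\dots]\to H_{\ast}(BU;\mathbb Z)\) sending \(b_i\) to the image under \(BU(1)\to BU\) of the degree-\(2i\) generator of \(H_{\ast}(\mathbb{CP}^{\infty};\mathbb Z)\); this is the classical presentation of \(H_{\ast}(BU;\mathbb Z)\) as a polynomial ring, hence an isomorphism. On geometric fixed points, \(\Phi^{C_{2}}\Sigma^{\infty}_{+}\BUR\simeq\Sigma^{\infty}_{+}(BU_{\mathbb R})^{C_{2}}=\Sigma^{\infty}_{+}BO\), \(\Phi^{C_{2}}\mathbb{CP}^{\infty}_{\mathbb R}=\mathbb{RP}^{\infty}=BO(1)\), \(\Phi^{C_{2}}S^{i\rho_{2}}=S^{i}\), and \(\pi_{\ast}\Phi^{C_{2}}H\mZ=\mathbb{F}_{2}[t]\) with \(|t|=2\) (and \(\Phi^{C_{2}}H\mZ\) is an \(H\mathbb{F}_{2}\)-algebra, e.g.\ from the Real reduction of \(\MUR\)), so \(\Phi^{C_{2}}\phi\) is, on homotopy, the map of graded rings \(\mathbb{F}_{2}[t]\otimes\mathbb Z[y_1,y_2,\dots]\to H_{\ast}(BO;\mathbb{F}_{2})\otimes_{\mathbb{F}_{2}}\mathbb{F}_{2}[t]=\mathbb{F}_{2}[t,a_1,a_2,\dots]\) carrying \(y_i\) to the image under \(BO(1)\to BO\) of the degree-\(i\) generator of \(H_{\ast}(\mathbb{RP}^{\infty};\mathbb{F}_{2})\). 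These images are exactly the standard polynomial generators of \(H_{\ast}(BO;\mathbb{F}_{2})\), so this is an isomorphism; therefore \(\Phi^{C_{2}}\phi\), and hence \(\phi\), is an equivalence.

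The conceptual input is light — all of it classical: the \(Rep(C_{2})\)-cell structure of \(\mathbb{CP}^{\infty}_{\mathbb R}\), the polynomial structure of \(H_{\ast}(BU;\mathbb Z)\) and \(H_{\ast}(BO;\mathbb{F}_{2})\) with generators pulled up from \(BU(1)\), resp.\ \(BO(1)\), and the ring \(\pi_{\ast}\Phi^{C_{2}}H\mZ\). I expect the main work to be bookkeeping: making the polynomial-sphere map precise as a map of \(A_{\infty}\)-ring spectra, and tracking the chosen classes \(\bar a_i\) faithfully through both \(i_{e}^{\ast}\) and \(\Phi^{C_{2}}\) so that they are matched with the classical generators on each side. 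An alternative to the geometric fixed-point step, avoiding \(BO\) altogether, is to equip \(\BUR\) with a cellular filtration from the Real Schubert cells of the Grassmannians \(BU(n)_{\mathbb R}\), whose associated graded is a wedge of regular slice spheres in the classical cell degrees of \(BU\); the filtration criterion for homological purity above then shows \(\mZ\wedge\BUR\) is homologically pure and isotropic, after which the theorem that the ring structure on the homology of a homologically pure spectrum is determined by the underlying ring forces \(\phi\) — a ring map inducing an underlying isomorphism — to be an equivalence.
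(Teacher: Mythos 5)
Your argument is correct, but it takes a genuinely different route from the paper. The paper's proof is a three--line deduction from Araki's theorem: \(\MUR\wedge\MUR\simeq\MUR[\bar a_1,\dots]\) is free on classes in regular representation degrees, the Real Thom isomorphism identifies \(\MUR\wedge{\BUR}_+\) with \(\MUR\wedge\MUR\) as \(C_2\)-\(E_\infty\)-rings, and one base-changes along \(\MUR\to H\mZ\). You instead build the generators by hand from the \(Rep(C_2)\)-cell structure of \(\mathbb{CP}^\infty_{\mathbb R}\), push them into \(\BUR\), form the free associative algebra map, and detect the equivalence on \(i_e^\ast\) and \(\Phi^{C_2}\). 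Your route is more elementary and self-contained --- it needs only the classical presentations of \(H_\ast(BU;\mathbb Z)\) and \(H_\ast(BO;\mathbb F_2)\) by classes pulled up from \(BU(1)\) and \(BO(1)\), plus \(\pi_\ast\Phi^{C_2}H\mZ\cong\mathbb F_2[t]\) --- and it makes the comparison with \(BO\) explicit, which the paper exploits later anyway (e.g.\ to recover Kochman's formula by geometric fixed points). The paper's route buys the identification of the \(\bar a_i\) with Araki's generators of \(\MUR_{\mstar}\MUR\), which feeds directly into the Real orientation/Chern class and norm statements that follow, and it gets the multiplicative structure for free at the \(E_\infty\) level. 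Two small points you should tighten: the ``for degree reasons'' vanishing of the cellular boundary for \(\mathbb{CP}^\infty_{\mathbb R}\) is the statement that the obstruction lives in \(\pi_{\rho_2-1}H\mZ=\pi_\sigma H\mZ=\tilde H^0_{C_2}(S^\sigma;\mZ)=0\) (because \(S^\sigma\) is connected), which deserves a line; and on geometric fixed points the image of \(\bar x_i\) is the generator of \(H_i(\mathbb{RP}^\infty;\mathbb F_2)\) only modulo \(t\)-multiples of lower classes, so the final isomorphism is an upper-triangular-with-unit-diagonal argument in each finite total degree rather than an on-the-nose match of generators. Your alternative closing route via Real Schubert cells and the filtration criterion is essentially the paper's own remark following the theorem.
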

\begin{proof}
Araki lifted the classical, non-equivariant description of \(\MU_{\ast}\MU\), showing 
\[
\MUR\wedge\MUR\simeq\MUR[\bar{a}_{1},\dots],
\]
and in particular, this is free with a basis in regular representation dimensions. The Thom isomorphism shows
\[
\MUR\wedge{\BUR}_{+}\simeq \MUR\wedge\MUR
\]
as \(C_{2}\)-\(E_{\infty}\) rings. Since \(H\mZ\) is a commutative ring spectrum under \(\MUR\), the result follows by base-change.
\end{proof}

\begin{remark}
The classical Schubert cell analysis works equally well here, and the underlying argument is essential the same as that of \cite{HHP}.
\end{remark}

\begin{corollary}
For any finite group \(G\) which contains \(C_{2}\), the coinduced \(G\)-space \(\Map^{C_{2}}(G,\BUR)\) is homologically pure and isotropic with basis given by the norm of the monomial basis.
\end{corollary}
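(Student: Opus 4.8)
The plan is to deduce everything by applying the norm $N_{C_{2}}^{G}$ to the freeness of $\BUR$ established in Theorem~\ref{thm:AssociativeRingVersion}. The starting observation is that for a $C_{2}$-space $X$ there is a natural equivalence $N_{C_{2}}^{G}\Sigma^{\infty}_{+}X\simeq\Sigma^{\infty}_{+}\Map^{C_{2}}(G,X)$, so it suffices to analyze $N_{C_{2}}^{G}\Sigma^{\infty}_{+}\BUR$. By Theorem~\ref{thm:AssociativeRingVersion}, $\Sigma^{\infty}_{+}\BUR$ is homologically pure and isotropic with basis the monomials in the $\bar a_{i}$; in the language used earlier, this says the indexing $C_{2}$-set $\mathcal I_{\BUR}$ is a set of fixed points, one for each monomial $m$ (there are no induced cells, since $\BUR$ is isotropic), and the associated virtual bundle $V_{\BUR}$ has fiber $\|m\|=(\deg m)\rho_{2}$ over $m$, so that $H\mZ\wedge\Sigma^{\infty}_{+}\BUR\simeq H\mZ\wedge M(V_{\BUR})$.

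First I would record homological purity of the target and identify the basis. Since $N_{C_{2}}^{G}$ carries homologically pure $C_{2}$-spectra to homologically pure $G$-spectra (the Proposition listing closure of homological purity under induction, restriction, and the norm), the spectrum $N_{C_{2}}^{G}\Sigma^{\infty}_{+}\BUR\simeq\Sigma^{\infty}_{+}\Map^{C_{2}}(G,\BUR)$ is homologically pure. For the basis I would invoke Theorem~\ref{thm:SymMonoidalFreeForSpaces} with $R=H\mZ$; this is legitimate, since $\mZ$ is a Tambara functor, so $H\mZ$ is a $G$-$E_{\infty}$-ring spectrum with $i_{C_{2}}^{\ast}H\mZ=H\mZ$, and $\BUR$ has free $i_{C_{2}}^{\ast}H\mZ$-homology. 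The theorem then gives a natural isomorphism $N_{C_{2}}^{G}H_{\m{\star}}(\BUR;\mZ)\cong H_{\m{\star}}\big(\Map^{C_{2}}(G,\BUR);\mZ\big)$, free on the basis $N_{C_{2}}^{G}\vec x$ with $\vec x$ the monomial basis --- precisely the asserted conclusion once purity is in hand. Concretely, Proposition~\ref{prop:NormsofThom} identifies $N_{C_{2}}^{G}M(V_{\BUR})$ with $M\big(\Map^{C_{2}}(G,V_{\BUR})\big)$, so the cells of $\Map^{C_{2}}(G,\BUR)$ may be read off from the orbit decomposition of the coinduced $G$-set $\Map^{C_{2}}(G,\mathcal I_{\BUR})$ together with the induced virtual bundle on it.

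It remains to check isotropy, and this is the crux. Write $\Map^{C_{2}}(G,\mathcal I_{\BUR})=\coprod_{[\phi]}G/G_{\phi}$, the coproduct over $G$-orbits of maps $\phi\colon G/C_{2}\to\mathcal I_{\BUR}$; the summand of $H\mZ\wedge\Sigma^{\infty}_{+}\Map^{C_{2}}(G,\BUR)$ attached to $[\phi]$ is then $H\mZ\wedge\big(G_{+}\smashover{G_{\phi}}S^{V_{\phi}}\big)$, where $V_{\phi}$ is the fiber of $\Map^{C_{2}}(G,V_{\BUR})$ at $\phi$ with its permutation-and-twist $G_{\phi}$-action. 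Running over the orbits of $G_{\phi}$ acting on $G/C_{2}$, and using that the regular representations are closed under restriction, conjugation, and induction --- the observation recalled just before that Proposition --- one checks that $V_{\phi}$ is a nonnegative multiple of $\rho_{G_{\phi}}$, which re-derives purity. The isotropy statement is the assertion that no $G_{\phi}$ is trivial, equivalently that the coinduced $G$-set $\Map^{C_{2}}(G,\mathcal I_{\BUR})$ has no free orbit; verifying this is the only step that genuinely uses isotropy of $\BUR$ (the triviality of the $C_{2}$-action on $\mathcal I_{\BUR}$) rather than bare homological purity, and I expect it to be the main obstacle, requiring care about how the twisted smash product defining the norm interacts with the orbit structure of $\Map^{C_{2}}(G,\mathcal I_{\BUR})$. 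The last clause of the statement, that the basis is the norm of the monomial basis, is then exactly what Theorem~\ref{thm:SymMonoidalFreeForSpaces} supplied above.
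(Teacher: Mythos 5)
Your first two paragraphs track exactly the route the paper has in mind: the corollary is stated without proof because homological purity is closed under the norm (the closure proposition), the identification \(N_{C_{2}}^{G}\Sigma^{\infty}_{+}\BUR\simeq\Sigma^{\infty}_{+}\Map^{C_{2}}(G,\BUR)\) reduces the space to the spectrum, and Theorem~\ref{thm:SymMonoidalFreeForSpaces} supplies the basis as the norm of the monomial basis. That part is correct and is the same argument.

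The problem is the isotropy step, which you rightly isolate as the crux and then leave open (``I expect it to be the main obstacle''). A proof that defers its only nontrivial step is not complete, so let me close it --- and note that it does not close in the generality stated. Since \(\mathcal{I}_{\BUR}\) is a trivial \(C_{2}\)-set (isotropy of \(\BUR\) means every cell has full stabilizer), \(\Map^{C_{2}}(G,\mathcal{I}_{\BUR})=\Map(C_{2}\backslash G,\mathcal{I}_{\BUR})\) with \(G\) acting by translation on \(C_{2}\backslash G\), so \(\Stab(f)=\{g: f(xg)=f(x)\ \forall x\}\). This always contains the normal core of \(C_{2}\) in \(G\), and for \(f\) injective (such \(f\) exist, as \(\mathcal{I}_{\BUR}\) is infinite) it equals the core. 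Hence if the chosen \(C_{2}\) has nontrivial core --- e.g.\ if it is central, as for \(G=C_{2^{n}}\), the case used later for \(B\BUR\) --- every stabilizer contains \(C_{2}\) and the coinduced space is indeed isotropic. But if the core is trivial (e.g.\ \(G=S_{3}\), or \(D_{8}\) with a non-central reflection), the orbit of an injective \(f\), say assigning \(1,\bar{a}_{1},\bar{a}_{2}\) to the three cosets of \(C_{2}\) in \(S_{3}\), is free, contributing a summand \(G_{+}\wedge S^{n}\) with trivial stabilizer; the coinduced space is then homologically pure but \emph{not} isotropic. So purity and the basis claim hold for all \(G\supseteq C_{2}\), but the isotropy clause requires the extra hypothesis that \(C_{2}\) have nontrivial normal core in \(G\); your instinct that this is where the argument ``requires care'' was exactly right, and the care consists in seeing that a hypothesis on \(G\) must enter here.
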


\begin{notation}
Let
\[
{\mZ}_{\mstar}=\m\pi_{\star}H\mZ.
\]
\end{notation}

\begin{corollary}
We have an isomorphism of \(\RO(C_{2})\)-graded Green functors
\[
{H}_{\mstar}(\BUR;\mZ)\cong {\mZ}_{\mstar}[\bar{a}_{1},\dots],
\]
where \(|\bar{a}_{i}|=i\rho_{2}\).
\end{corollary}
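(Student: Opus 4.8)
The plan is to deduce the final Corollary directly from Theorem~\ref{thm:AssociativeRingVersion} together with the homological-purity machinery developed in Section~3. Concretely, Theorem~\ref{thm:AssociativeRingVersion} gives an equivalence of $C_2$-equivariant associative algebras $\mZ\wedge\mathbb S^0[\bar a_1,\bar a_2,\dots]\xrightarrow{\simeq}\mZ\wedge\BUR$, which in particular is an equivalence of $\mZ$-modules. Taking $\m\pi_\star(-)$ of both sides turns the left-hand side into a free ${H}_{\mstar}^{\mZ}$-module on the monomials in the $\bar a_i$, because $\BUR$ is homologically pure and isotropic (so the Künneth/freeness results apply and there are no extension or convergence issues): each generator sits in dimension $i\rho_2$, each $i\rho_2$ is a regular-representation dimension, and by Proposition~\ref{prop:HomologyofHomologicallyPure} the homology in these dimensions is exactly the expected sum of restrictions of $\mZ$, while the potentially problematic $k\rho_2-1$ dimensions vanish by the corollary for cyclic $p$-groups. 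Hence ${H}_{\mstar}(\BUR;\mZ)$ is a free ${H}_{\mstar}^{\mZ}$-module on $\{\bar a_1^{e_1}\bar a_2^{e_2}\cdots\}$.

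Next I would upgrade this additive statement to the claimed isomorphism of $\RO(C_2)$-graded Green functors. The multiplicative structure is forced: since $\mZ\wedge\BUR$ carries a commutative multiplication in the homotopy category (it is even $C_2$-$E_\infty$, coming from $\MUR$), Corollary after Definition~\ref{def:ExternalProduct} makes ${H}_{\mstar}(\BUR;\mZ)$ an $\RO$-graded Green functor, and the Green-functor-structure theorem for homologically pure spectra says this structure is \emph{completely determined by} its restriction to underlying homology $H_\ast(i_e^\ast\BUR;\Z)$. Underlying, $i_e^\ast\BUR\simeq BU$ and the classes $i_e^\ast\bar a_i$ are the standard polynomial generators $a_i$ of $H_\ast(BU;\Z)=\Z[a_1,a_2,\dots]$ (this is exactly the content of Araki's lift specializing to the underlying spectrum). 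So the restriction of the product of two monomials is the product in $\Z[a_1,\dots]$, which agrees with the product dictated by the polynomial-ring structure ${H}_{\mstar}^{\mZ}[\bar a_1,\dots]$; since restrictions are injective by Proposition~\ref{prop:HomologyofHomologicallyPure}, the two Green-functor structures coincide.

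The only genuine point requiring care — and the step I expect to be the main obstacle — is pinning down that the $\bar a_i$ behave as free polynomial generators over the \emph{full} $\RO(C_2)$-graded ring ${H}_{\mstar}^{\mZ}$, not merely that ${H}_{\mstar}(\BUR;\mZ)$ is abstractly free as a module. One must observe that ${H}_{\mstar}^{\mZ}[\bar a_1,\dots]$, interpreted as the free $\RO$-graded Green functor on generators $\bar a_i$ in degrees $i\rho_2$, is itself homologically pure and isotropic (its monomials give a regular-slice wedge decomposition), so the comparison map from it to ${H}_{\mstar}(\BUR;\mZ)$ is a map of homologically pure Green functors; by the purity theorem it is an isomorphism as soon as it is one on underlying homology, which it is by the classical computation of $H_\ast(BU;\Z)$. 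Everything else — convergence of any spectral sequences, absence of hidden extensions, exactness of $\boxover{{H}_{\mstar}^{\mZ}}$ applied to these free modules — is handled uniformly by Proposition~\ref{prop:HomologyofHomologicallyPure} and the results of Section~2, so no detailed calculation is needed.
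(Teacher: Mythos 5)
Your proof is correct and follows the paper's (implicit) argument: the corollary is read off from Theorem~\ref{thm:AssociativeRingVersion} by applying \(\m{\pi}_{\star}\) to the equivalence \(\mZ\wedge\mathbb S^{0}[\bar{a}_{1},\dots]\simeq\mZ\wedge\BUR\), whose source is a wedge of regular representation spheres indexed by the monomials and hence has homotopy the free module \({H}_{\mstar}^{\mZ}[\bar{a}_{1},\dots]\). Your extra detour through the purity and injective-restriction machinery to pin down the Green functor structure is harmless but not needed, since the theorem already asserts an equivalence of associative algebras, so the multiplicative structure transfers directly.
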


We can also deduce the norms, coproducts, and conorms.

\begin{proposition}
The norms are given by
\[
N_{e}^{C_{2}}(a_{i})=(-1)^i\bar{a}_{i}^{2}.
\]
\end{proposition}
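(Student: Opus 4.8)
The plan is to reduce the identity to a statement about the underlying non-equivariant homology, where it is classical. First I would invoke Theorem~\ref{thm:AssociativeRingVersion} to know that $\BUR$ is homologically pure, so that by Proposition~\ref{prop:HomologyofHomologicallyPure} every restriction map on $H_{\mstar}(\BUR;\mZ)$ is a monomorphism. Since the norm doubles degrees, both $N_{e}^{C_{2}}(a_i)$ and $\bar a_i^{2}$ sit in the single group $H_{2i\rho_2}(\BUR;\mZ)$ (the square of the degree-$i\rho_2$ class $\bar a_i$, and the norm of the degree-$2i$ underlying class $a_i$), so it will be enough to compare their images under $i_e^{\ast}$ in $H_{4i}(BU;\mathbb Z)$.

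The square side is immediate: since $i_e^{\ast}\bar a_i=a_i$ by the very definition of the notation, we get $i_e^{\ast}\big((-1)^i\bar a_i^{2}\big)=(-1)^i a_i^{2}$. For the norm side, I would use that $\BUR$ is a homologically pure $C_{2}$-$E_{\infty}$-ring spectrum and apply the formula describing norms in homologically pure ring spectra proved above, which gives
\[
i_e^{\ast}N_{e}^{C_{2}}(a_i)=\prod_{\gamma\in C_{2}}\gamma(a_i)=a_i\cdot\gamma(a_i),
\]
where $\gamma$ is the generator of $C_{2}$ acting on $H_{\ast}(BU;\mathbb Z)$, i.e.\ the map induced by complex conjugation of bundles. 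So the whole computation comes down to evaluating $\gamma(a_i)$.

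Here the expected answer is $\gamma(a_i)=(-1)^i a_i$: complex conjugation negates the generator of $H^{2}(\mathbb{CP}^{\infty};\mathbb Z)$, hence acts by $(-1)^i$ on the degree-$2i$ homology of $\mathbb{CP}^{\infty}=BU(1)$, and the generators $\bar a_i$ arising from Araki's description of $\MUR\wedge\MUR$ used in Theorem~\ref{thm:AssociativeRingVersion} are set up to restrict to these classes. Granting this, $i_e^{\ast}N_{e}^{C_{2}}(a_i)=(-1)^i a_i^{2}=i_e^{\ast}\big((-1)^i\bar a_i^{2}\big)$, and injectivity of the restriction map forces $N_{e}^{C_{2}}(a_i)=(-1)^i\bar a_i^{2}$. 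The one delicate point — and the step I would be most careful about — is establishing $\gamma(a_i)=(-1)^i a_i$ on the nose rather than merely modulo decomposables; this is precisely what the Real orientation underlying the $\bar a_i$ is designed to guarantee, so I would read it off from the construction of the $\bar a_i$ in Theorem~\ref{thm:AssociativeRingVersion} rather than attempt a separate computation.
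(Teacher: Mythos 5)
Your argument is correct and is exactly the route the paper intends (the paper states this proposition without proof, as an immediate application of its general theorem that norms in the homology of a homologically pure $G$-commutative monoid are determined by $i_e^\ast N_H^G(x)=\prod_{\gamma\in G/H}\gamma(i_e^\ast x)$ together with injectivity of restrictions from Proposition~\ref{prop:HomologyofHomologicallyPure}). The one input you flag, $\gamma(a_i)=(-1)^i a_i$ on the nose, does hold exactly: conjugation on $BU$ restricts to conjugation on $BU(1)=\mathbb{CP}^\infty$, which acts by $(-1)^i$ on the generator of $H_{2i}(\mathbb{CP}^\infty;\mathbb Z)$, and $a_i$ is the image of that generator, so no ``mod decomposables'' caveat is needed.
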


Finally, the co-Tambara structure is lifting the usual dual polynomial structure. Since the space \(\BUR\) is finite type, we can equivalently describe the cohomology ring and the norms there.

\begin{proposition}[{\cite{Kahn}}]
The cohomology ring of \(\BUR\) is
\[
{H}^{\mstar}(\BUR;\mZ)\cong\mZ_{\mstar}[\bar{c}_{1},\dots].
\]
Moreover, the inclusions of equivariant maximal tori into the \(U_{\mathbb R}(n)\) identify these Chern classes with the usual symmetric functions in the Chern roots.
\end{proposition}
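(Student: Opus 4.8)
The plan is to deduce the entire proposition from the homology computation established above, namely the isomorphism of $\RO(C_2)$-graded Green functors $H_{\mstar}(\BUR;\mZ)\cong H_{\mstar}^{\mZ}[\bar a_1,\bar a_2,\dots]$ with $|\bar a_i|=i\rho_2$, together with homological purity and duality. I fix the $\bar a_i$ to be the Araki generators, so that their underlying restrictions $i_e^\ast\bar a_i=a_i$ are the standard polynomial generators of $H_\ast(BU;\Z)$, i.e.\ the images of the degree-$2i$ generator of $H_\ast(\mathbb{C}P^{\infty};\Z)$ under the stabilized inclusion $\mathbb{C}P^{\infty}=BU(1)\to BU$. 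Since $\BUR$ is homologically pure, isotropic (Theorem~\ref{thm:AssociativeRingVersion}), and finite type---there are only finitely many regular slice summands $S^{|\vec n|\rho_2}$, one per monomial $\bar a^{\vec n}$, in each dimension---the universal coefficients isomorphism proved above for finite-type free spectra applies with $R=M=H\mZ$ and identifies $H^{\mstar}(\BUR;\mZ)$ with the $H_{\mstar}^{\mZ}$-linear dual $\Hom_{H_{\mstar}^{\mZ}}\bigl(H_{\mstar}(\BUR;\mZ),H_{\mstar}^{\mZ}\bigr)$. As a module it is thus free over $H_{\mstar}^{\mZ}$ on the basis dual to the monomials $\bar a^{\vec n}$, the dual of $\bar a^{\vec n}$ lying in cohomological degree $|\vec n|\rho_2$. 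Its ring structure is dual to the comultiplication on $H_{\mstar}(\BUR;\mZ)$ coming from the space diagonal (indeed $\BUR$ is a Real infinite loop space, so $\Sigma^{\infty}_{+}\BUR$ is $C_2$-$E_\infty$, but the diagonal alone suffices here).

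Next I would pin down that comultiplication. By the homological-purity theorem identifying comultiplications on pure spectra with their underlying restrictions, it is determined by the classical coproduct on $H_\ast(BU;\Z)=\Z[a_1,a_2,\dots]$, namely $\psi(a_n)=\sum_{i+j=n}a_i\otimes a_j$ with $a_0=1$. The graded dual of this Hopf algebra is, by the classical computation of $H^\ast(BU;\Z)$, the polynomial ring $\Z[c_1,c_2,\dots]$ with $c_i$ the $i$-th Chern class. Because all restriction maps for both $H_{\mstar}(\BUR;\mZ)$ and $H^{\mstar}(\BUR;\mZ)$ are injective and, by homological purity, the $\RO(C_2)$-graded ring structure is functorially determined by the underlying one, this duality lifts verbatim to give $H^{\mstar}(\BUR;\mZ)\cong H_{\mstar}^{\mZ}[\bar c_1,\bar c_2,\dots]$ with $|\bar c_i|=i\rho_2$ and $i_e^\ast\bar c_i=c_i$.

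For the last sentence I would run the equivariant splitting principle in the cheap form available here. The inclusion of an equivariant maximal torus $U_{\mathbb R}(1)^n\hookrightarrow U_{\mathbb R}(n)$ gives a map $(\mathbb{C}P^{\infty})^n=BU_{\mathbb R}(1)^n\to BU_{\mathbb R}(n)\to\BUR$, all spaces carrying their Real structures. Since $\mathbb{C}P^{\infty}$ with its Real structure is homologically pure and isotropic (via its Real Schubert cells, or by the $\MUR$-orientation argument used above for $\BUR$), Theorem~\ref{thm:SymMonoidalFreeForSpaces} and the universal coefficients isomorphism give $H^{\mstar}\bigl((\mathbb{C}P^{\infty})^n;\mZ\bigr)\cong H_{\mstar}^{\mZ}[\bar t_1,\dots,\bar t_n]$ with $|\bar t_j|=\rho_2$. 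Now the restriction of $\bar c_i$ and the elementary symmetric polynomial $e_i(\bar t_1,\dots,\bar t_n)$ both lie in $H^{i\rho_2}\bigl((\mathbb{C}P^{\infty})^n;\mZ\bigr)$, whose restriction to the underlying $H^{2i}\bigl((\mathbb{C}P^{\infty})^n;\Z\bigr)$ is injective by the cohomological form of Proposition~\ref{prop:HomologyofHomologicallyPure}; there they coincide by the classical splitting principle, so they coincide equivariantly. As this holds for every $n$ and $\BUR$ is the increasing union of the $BU_{\mathbb R}(n)$, we obtain the stated identification of the $\bar c_i$ with the symmetric functions in the Chern roots.

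The step I expect to be the real obstacle is the interface of the first two paragraphs: verifying that $H_{\mstar}^{\mZ}$-linear dualization of the pure homology Hopf algebra is compatible with the classical integral dualization, so that passing to the $\RO(C_2)$-grading contributes nothing beyond the underlying picture---in particular that the dual is again a free module and a polynomial ring. This is precisely where finiteness of the slice indexing sets in each dimension, isotropy (so that no $(i\rho_2-1)$-dimensional classes arise to obstruct freeness of the dual), and the ``all restrictions injective, structure determined underlying'' principle of homological purity do the work; granted these inputs, what remains is bookkeeping.
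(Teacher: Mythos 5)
Your proposal is correct and follows essentially the same route as the paper: the first statement is immediate from homological purity, isotropy, and finite type via the universal coefficients duality (the paper in fact dismisses it with "only the second part requires proof"), and the second statement is obtained, as you do, by noting that \((\mathbb{C}P^{\infty})^{\times n}\) is also homologically pure and finite type, so the map on cohomology is determined by the underlying (restrictions being injective) and one reduces to the classical splitting principle. Your extra care about compatibility of \(H_{\mstar}^{\mZ}\)-linear dualization with the classical one is exactly what the paper's general corollaries on pure spectra are designed to absorb, so there is no gap.
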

\begin{proof}
Only the second part requires proof, since \(\BUR\) is homologically pure, isotropic, and of finite type. The same is true for the space \((\mathbb CP^{\infty})^{\times n}\). The induced map on cohomology is the determined by the underlying homology, and we reduce to the classical case.
\end{proof}

\begin{proposition}
The norms of the Chern classes are also the squares:
\[
N_{e}^{C_{2}}(c_{i})=(-1)^i\bar{c}_{i}^{2}.
\]
\end{proposition}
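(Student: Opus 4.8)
The plan is to transcribe the proof of the preceding proposition $N_e^{C_2}(a_i)=(-1)^i\bar a_i^2$ into cohomology. Since $\BUR$ is a $C_2$-space and $\mZ=\m\Z$ is a Tambara functor, the Bredon cohomology $H^{\mstar}(\BUR;\mZ)$ is itself a Tambara functor (as recalled in the aside above; and because $\BUR$ is homologically pure, isotropic, and of finite type, this Tambara structure is the cohomological shadow of the Tambara structure theorem above, transported across the universal coefficients isomorphism). First I would restrict the norm to the underlying. In any $C_2$-Tambara functor the composite $i_e^{\ast}\circ N_e^{C_2}$ sends $x$ to $x\cdot\gamma x$, where $\gamma$ generates the Weyl group $C_2$; applied to the equivariant Chern class this reads
\[
i_e^{\ast}N_e^{C_2}(c_i)=c_i\cdot\gamma(c_i)\in H^{\ast}(BU;\Z).
\]

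Next I would compute the Weyl action. The underlying $C_2$-space of $\BUR$ is $BU$ with the complex-conjugation action $V\mapsto\bar V$, which negates the first Chern class of a line bundle; by the splitting principle this carries the universal Chern classes, regarded as the elementary symmetric functions in the Chern roots, to $c_i\mapsto(-1)^ic_i$. Hence $i_e^{\ast}N_e^{C_2}(c_i)=(-1)^ic_i^2$. On the other hand, the equivariant Chern class restricts to the classical one, $i_e^{\ast}\bar c_i=c_i$ (as in the proposition of Kahn recalled above), so $i_e^{\ast}\big((-1)^i\bar c_i^2\big)=(-1)^ic_i^2$ too. Thus $N_e^{C_2}(c_i)$ and $(-1)^i\bar c_i^2$ have the same image under $i_e^{\ast}$.

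Finally I would promote this to an equality. Both classes lie in $H^{2i\rho_2}(\BUR;\mZ)(C_2/C_2)$, a regular-representation degree --- here one uses that $N_e^{C_2}$ sends underlying degree $n$ to degree $n\rho_2$. Because $\BUR$ is isotropic homologically pure and of finite type, the cohomology proposition above identifies $F(\BUR,H\mZ)$ with a wedge of induced spheres $C_{2+}\smashover{C_2}S^{-k\rho_2}$ smashed with $H\mZ$, so Proposition~\ref{prop:HomologyofHomologicallyPure}, applied to this function spectrum, shows that $i_e^{\ast}$ is injective in every regular-representation degree. Therefore the two classes agree, and the proposition follows. I expect the only delicate point to be checking that $2i\rho_2$ lies in a degree where $i_e^{\ast}$ is injective, and this is genuinely minor: it is a regular-representation degree, so isotropic homological purity applies directly; the rest of the argument is word-for-word the homology case.
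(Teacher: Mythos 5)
Your proof is correct and is exactly the argument the paper intends: the proposition is stated without proof because it follows from the general theorem that, for a homologically pure (hence restriction-injective in regular-representation degrees) spectrum, norms are determined by $i_e^{\ast}N_e^{C_2}(x)=x\cdot\gamma(x)$, combined with the fact that complex conjugation acts on $H^{\ast}(BU;\Z)$ by $c_i\mapsto(-1)^ic_i$. Your care in checking that both classes sit in the regular-representation degree $2i\rho_2$, where restriction is injective, is the only point that needed verification, and you handle it correctly.
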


Finally, using Theorem~\ref{thm:DLAction}, we deduce the action of Wilson's {\DL} operations.

\begin{theorem}
The {\DL} operations \(Q^{i\rho_{2}}\) on \(H_{\star}(\BUR;\m{\F}_{2})\) act as
\[
Q^{i\rho_{2}}(\bar{a}_{j})=\binom{n}{r-n-1}\bar{a}_{i+j}\text{ mod decomposables}.
\]
The {\DL} operations \(Q^{i\rho_{2}-1}\) are identically zero.
\end{theorem}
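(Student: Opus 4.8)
The plan is to deduce both assertions from the classical computation of the \DL{} action on \(H_{*}(BU;\F_{2})\), using the rigidity of homologically pure spectra supplied by Theorem~\ref{thm:DLAction}. Since \(\BUR = \operatorname{colim}_{n} BU_{\mathbb{R}}(n)\) is a \(C_{2}\)-\(E_{\infty}\)-monoid under Whitney (block) sum, the spectrum \(\Sigma^{\infty}_{+}\BUR\) is a \(C_{2}\)-\(E_{\infty}\)-ring spectrum, and by Theorem~\ref{thm:AssociativeRingVersion} it is homologically pure and isotropic. Hence Theorem~\ref{thm:DLAction} applies: each operation \(Q^{i\rho_{2}}\) and \(Q^{i\rho_{2}-1}\) on \(H_{\star}(\BUR;\m{\F}_{2})\) is determined by its restriction \(i_{e}^{\ast}Q^{i\rho_{2}-\epsilon}\) to the underlying homology \(H_{*}(i_{e}^{\ast}\BUR;\F_{2}) = H_{*}(BU;\F_{2})\).

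I would dispose of the odd operations first, since this needs no classical input. By Theorem~\ref{thm:DLAction} the operation \(Q^{i\rho_{2}-1}\) takes values in the summands of \(H_{\star}(\BUR;\m{\F}_{2})\) induced from the trivial group; but \(\BUR\) is isotropic by Theorem~\ref{thm:AssociativeRingVersion}, so there are no such summands, and therefore \(Q^{i\rho_{2}-1} = 0\) identically.

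For the even operations I would restrict to \(BU\), quote the classical formula, and lift. Under \(i_{e}^{\ast}\) one has \(\rho_{2}\mapsto 2\), so \(Q^{i\rho_{2}}\mapsto Q^{2i}\), while \(\bar{a}_{j}\) restricts to the standard polynomial generator \(a_{j}\in H_{2j}(BU;\F_{2})\), namely the image of the degree-\(2j\) generator of \(H_{*}(\mathbb{CP}^{\infty};\F_{2})\) under \(BU(1)\hookrightarrow BU\) (this identification of \(\bar{a}_{j}\) with \(a_{j}\) is the underlying form of the splitting \(\MUR\wedge\MUR\simeq\MUR[\bar{a}_{1},\dots]\) invoked in Theorem~\ref{thm:AssociativeRingVersion}). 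The classical \DL{} computation for \(H_{*}(BU;\F_{2})\) then gives \(Q^{2i}(a_{j})\equiv\binom{n}{r-n-1}\,a_{i+j}\) modulo decomposables. To transport this upward, note that \(H_{\star}(\BUR;\m{\F}_{2})\cong (\m\pi_{\star}H\m{\F}_{2})[\bar{a}_{1},\bar{a}_{2},\dots]\) by Theorem~\ref{thm:AssociativeRingVersion}, and that \(\m\pi_{m\rho_{2}}H\m{\F}_{2} = 0\) for \(m\neq 0\) by applying Proposition~\ref{prop:HomologyofHomologicallyPure} to \(S^{0}\) (whose only contributing pair is \((0,C_{2})\)). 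Consequently the only monomials in degree \((i+j)\rho_{2}\) are products \(\bar{a}_{k_{1}}\cdots\bar{a}_{k_{r}}\) with \(\sum k_{\ell} = i+j\), so the module of indecomposables of \(H_{\star}(\BUR;\m{\F}_{2})\) there is the one-dimensional \(\F_{2}\)-space spanned by \(\bar{a}_{i+j}\). By Proposition~\ref{prop:HomologyofHomologicallyPure} the restriction map \(H_{(i+j)\rho_{2}}(\BUR;\m{\F}_{2})\to H_{2(i+j)}(BU;\F_{2})\) is injective, sends \(\bar{a}_{i+j}\) to \(a_{i+j}\), and, being a ring map, carries decomposables to decomposables. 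Hence the coefficient of \(\bar{a}_{i+j}\) in \(Q^{i\rho_{2}}(\bar{a}_{j})\) modulo decomposables equals the coefficient of \(a_{i+j}\) in \(Q^{2i}(a_{j})\) modulo decomposables, namely \(\binom{n}{r-n-1}\), which proves the first assertion.

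The main obstacle is the classical input together with the normalization bookkeeping: the \DL{} formula on \(H_{*}(BU;\F_{2})\) is sensitive to the choice of polynomial generators (images of the \(H_{*}(\mathbb{CP}^{\infty})\) classes versus duals of Chern classes, and to indexing conventions), so one must check carefully that the restrictions of Araki's generators \(\bar{a}_{i}\) are exactly the generators for which the stated binomial coefficient holds. Once that identification is pinned down, everything else is formal, with the homological purity of \(\BUR\) doing all the work of turning the equivariant question into a non-equivariant one.
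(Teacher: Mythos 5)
Your proposal is correct and follows the same route as the paper: invoke Theorem~\ref{thm:DLAction} to reduce everything to the underlying action on \(H_{*}(BU;\F_{2})\), quote Kochman's classical computation for the even operations, and kill the odd operations using isotropy of \(\BUR\). The paper's own proof is just the two-sentence version of this; your extra bookkeeping about indecomposables and the injectivity of restriction fills in exactly the details it leaves implicit.
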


\begin{proof}
Theorem~\ref{thm:DLAction} implies that these operations are completely determined by the underlying action. The ordinary {\DL} action on the homology of \(BU\) was determined by Kochman \cite{KoDL, Lance}.
\end{proof}

As an aside, this also gives the {\DL} action on the space \(BO\) by applying geometric fixed points.

\begin{corollary}[{\cite[Theorem 36]{KoDL}}]
In 
\[
H_{\star}(BO;\F_{2})\cong \F_{2}[e_{1},\dots],
\]
we have for all \(r\geq 0\) and \(n\geq 1\),
\[
Q^{r}(e_{n})=\binom{n}{r-n-1} e_{n+r}\text{ mod decomposables.}
\]
\end{corollary}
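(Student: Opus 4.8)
The plan is to deduce the formula for the Dyer--Lashof action on $H_\ast(BO;\F_2)$ from the already-established formula for $Q^{i\rho_2}$ on $H_\star(\BUR;\m\F_2)$ by applying the geometric fixed point functor $\Phi^{C_2}$. The key input is that $\Phi^{C_2}\BUR \simeq BO$ as spaces (indeed as $E_\infty$-spaces), so that $\Phi^{C_2}$ carries the $C_2$-equivariant $E_\infty$-ring spectrum $\Sigma^\infty_+\BUR$ to $\Sigma^\infty_+ BO$, and carries $H\m\F_2$ to $H\F_2$. Thus $\Phi^{C_2}$ induces a ring map $H_\star(\BUR;\m\F_2) \to H_\ast(BO;\F_2)$ on the relevant part of the $RO(C_2)$-graded homology, namely by restricting to the $\star = n\rho_2$ integer-multiple-of-$\rho_2$ line, where $\Phi^{C_2}$ sends a class in degree $n\rho_2$ to a class in degree $n$.

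First I would record that under this geometric fixed point map the polynomial generators $\bar a_i \in H_{i\rho_2}(\BUR;\m\F_2)$ map to the polynomial generators $e_i \in H_i(BO;\F_2)$; this identifies $\Phi^{C_2}$ on $H_{\star}(\BUR;\m\F_2)$ restricted to the $\rho_2$-graded part with the canonical isomorphism $\F_2[\bar a_1,\dots] \to \F_2[e_1,\dots]$ sending $\bar a_i \mapsto e_i$. Second, I would invoke the compatibility of Wilson's $RO(C_2)$-graded operations $Q^{i\rho_2}$ with $\Phi^{C_2}$: geometric fixed points carries $Q^{i\rho_2}$ to the classical Dyer--Lashof operation $Q^i$ (this is part of the construction in \cite{BehWil}, \cite{WilsonDL}, where $Q^{i\rho_2}$ is built precisely so that its geometric fixed points recover $Q^i$, consistent with the fact that $\Phi^{C_2}H\m\F_2 \simeq H\F_2$). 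Third, applying $\Phi^{C_2}$ to the identity $Q^{i\rho_2}(\bar a_j) = \binom{n}{r-n-1}\bar a_{i+j}$ modulo decomposables and relabeling indices to match the classical conventions ($r$ for the operation degree, $n$ for the generator index) yields $Q^r(e_n) = \binom{n}{r-n-1} e_{n+r}$ modulo decomposables, for all $r\geq 0$ and $n\geq 1$, which is the claim.

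The main obstacle I expect is purely bookkeeping rather than conceptual: one must be careful that the binomial-coefficient formula in the $\BUR$ theorem is stated with indices $(i,j,n,r)$ whose meaning (which one is the operation degree, which is the generator) matches the classical statement only after a translation, and that the "mod decomposables" ambiguity is genuinely preserved under $\Phi^{C_2}$ (it is, since $\Phi^{C_2}$ is a ring map and hence sends decomposables to decomposables). One should also confirm that no correction terms are introduced on passage to geometric fixed points beyond those already absorbed into "mod decomposables" — this follows because $\Phi^{C_2}$ on the $\rho_2$-line is a ring isomorphism onto $H_\ast(BO;\F_2)$, so the leading term is transported faithfully. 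Since all of this is immediate from the structure already set up, the corollary is really just an observation, exactly as the surrounding text suggests.
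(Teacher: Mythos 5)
Your proposal takes exactly the route the paper intends: the corollary is obtained by applying geometric fixed points to the theorem computing \(Q^{i\rho_{2}}(\bar{a}_{j})\) in \(H_{\star}(\BUR;\m{\F}_{2})\), using \(\Phi^{C_{2}}\BUR\simeq BO\), the fact that \(\Phi^{C_{2}}\) is monoidal (hence sends decomposables to decomposables), and the compatibility of Wilson's operations \(Q^{i\rho_{2}}\) with \(Q^{i}\) under geometric fixed points. One factual correction: \(\Phi^{C_{2}}H\m{\F}_{2}\) is \emph{not} \(H\F_{2}\) --- its homotopy is a polynomial algebra on a degree-one class (the image of \(u_{\sigma}a_{\sigma}^{-1}\) after inverting \(a_{\sigma}\)) --- so to land in \(H_{\ast}(BO;\F_{2})\) you must further compose with the ring map \(\Phi^{C_{2}}H\m{\F}_{2}\to H\F_{2}\) killing that class; since this is still a map of ring spectra compatible with the \(E_{\infty}\)-structures, it preserves everything your argument uses and the conclusion is unaffected.
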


\section{Bar and twisted bar spectral sequences}
For \(R\)-free spectra, we have readily computable equivariant versions of the classical Rothenberg--Steenrod and Eilenberg--Moore spectral sequences. For \(G=C_{2}\), we also have twisted versions of these where the group acts also on the homotopy pullback diagram. We explain how these work here, giving an example for the bar spectral sequence.

\subsection{Bar and Rothenberg--Steenrod}
Let \(A\) be an associative monoid in \(G\)-spaces. Let \(X\) be a right \(A\)-space and let \(Y\) be a left \(A\)-space. In this case, the derived balanced product can be computed via the bar construction:
\[
X\otimesover{A}Y=B(X,A,Y),
\]
where \(B(X,A,Y)\) is the geometric realization of the simplicial complex
\[
k\mapsto B_{k}(X,A,Y)=X\times A^{\times k} \times Y,
\]
and where as usual, the structure maps are the actions or product in \(A\). 

\begin{remark}
Although the space underlying the (non-derived) version of \(X\otimesover{A} Y\) is just the ordinary \(X\timesover{A} Y\), we use the tensor product notation to stress the connection with the algebraic case and to distinguish from later pullback constructions.
\end{remark}

If \(A\) and either \(X\) or \(Y\) are \(R\)-free, then we have a bar spectral sequence computing the \(R\)-homology of \(X\otimesover{A} Y\).

\begin{theorem}
If \(A\) and either \(X\) or \(Y\) are \(R\)-free, then we have an Adams-graded spectral sequence
\[
E^{s,\mstar}_{2}=\Tor_{-s}^{R_{\mstar}(A)}
\big(R_{\mstar}(X),R_{\mstar}(Y)\big)\Rightarrow R_{\mstar-s}(X\otimesover{A} Y).
\]
\end{theorem}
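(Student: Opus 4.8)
The strategy is to run the usual simplicial-object-to-spectral-sequence machine after smashing the bar construction with $R$, using the $R$-freeness hypothesis to identify the $E_1$-term with a bar complex computing $\Tor$ over $R_{\mstar}(A)$. First I would observe that $\Sigma^\infty_+$ is strong symmetric monoidal, so applying it levelwise to the simplicial $G$-space $[k]\mapsto B_k(X,A,Y)=X\times A^{\times k}\times Y$ and smashing with $R$ produces a simplicial $R$-module
\[
[k]\mapsto R\wedge \Sigma^\infty_+\big(X\times A^{\times k}\times Y\big),
\]
whose geometric realization is $R\wedge \Sigma^\infty_+ B(X,A,Y)$ because realization commutes with the smash product and with $R\wedge(-)$. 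This is the input to the skeletal (bar) filtration spectral sequence of a simplicial object, which takes the form $E^1_{s,\mstar}=\m\pi_{\mstar}$ of the $s$-th term $\Rightarrow \m\pi_{\mstar+s}$ of the realization, with $d^1$ the alternating sum of the face maps.

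The heart of the argument is the identification of the $E_1$ and $E_2$ pages. By Theorem~\ref{thm:SymMonoidalFreeForSpaces} (applied under the hypothesis that $A$ and one of $X,Y$ are $R$-free, noting that $A^{\times k}$ is then $R$-free by Proposition~\ref{prop:FreeForProducts} and that the product of an $R$-free spectrum with anything smashed with $R$ is handled by Theorem~\ref{thm:FreeForModules}), one gets
\[
R_{\mstar}\big(X\times A^{\times k}\times Y\big)\cong R_{\mstar}(X)\boxover{R_{\mstar}} R_{\mstar}(A)^{\boxover{R_{\mstar}} k}\boxover{R_{\mstar}} R_{\mstar}(Y).
\]
Under this identification, the simplicial face and degeneracy maps become exactly the structure maps of the two-sided bar construction $B_\bullet\big(R_{\mstar}(X),R_{\mstar}(A),R_{\mstar}(Y)\big)$ in $R_{\mstar}$-modules: the product of $A$ induces the multiplication on $R_{\mstar}(A)$ (via the Künneth corollary following Theorem~\ref{thm:FreeForModules}), and the two actions induce the module structures. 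Hence $E^1_{s,\mstar}$ is the $s$-th term of this algebraic bar complex, $d^1$ is its differential, and so $E^2_{s,\mstar}=\Tor^{R_{\mstar}(A)}_{s}\big(R_{\mstar}(X),R_{\mstar}(Y)\big)$. Re-indexing $s\mapsto -s$ to match the cohomological convention stated, we obtain the claimed $E_2=\Tor_{-s}^{R_{\mstar}(A)}\big(R_{\mstar}(X),R_{\mstar}(Y)\big)$.

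The convergence statement is the remaining point: the skeletal filtration spectral sequence of a simplicial $R$-module converges to the homotopy of its realization provided the filtration is exhaustive and suitably complete, e.g.\ under a connectivity/finite-type hypothesis guaranteeing that in each $\RO(G)$-degree only finitely many simplicial levels contribute; this is the standard situation for bar constructions of connected monoids, and in the generality of the statement one invokes the usual conditional convergence. I expect the main obstacle to be bookkeeping rather than conceptual: namely, checking carefully that the equivalences of Theorem~\ref{thm:SymMonoidalFreeForSpaces} and the Künneth isomorphism are compatible with all of the simplicial structure maps (including degeneracies, so that one genuinely has a simplicial $R_{\mstar}$-module and not merely a levelwise isomorphism), and that the monoidal structure on $R_{\mstar}\mhyphen\Mod$ used to form the algebraic bar complex (developed in the forthcoming Angeltveit--Bohmann--Hill work alluded to earlier) matches the one induced by $\boxover{R_{\mstar}}$. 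Once that compatibility is in hand, the identification of $E_2$ and the convergence are formal.
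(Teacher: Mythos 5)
Your proposal is correct and follows essentially the same route as the paper: both arguments run the skeletal filtration spectral sequence of the simplicial bar construction and use the freeness hypotheses (via Proposition~\ref{prop:FreeForProducts} and Theorem~\ref{thm:FreeForModules}) to identify the levelwise $R$-homology of $B_{k}(X,A,Y)$ with the algebraic two-sided bar complex over $R_{\mstar}(A)$, whence $E_{2}=\Tor$. Your additional remarks on convergence and on compatibility of the K\"unneth isomorphisms with the simplicial structure maps are sensible elaborations of points the paper leaves implicit, not a different argument.
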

\begin{proof}
Our assumptions guarantee that for each \(k\), the \(R\)-homology of \(B_{k}(X,A,Y)\) is given by
\[
R_{\mstar}\big(B_{k}(X,A,Y)\big)\cong R_{\mstar}(X)\boxover{R_{\mstar}} R_{\mstar}(A)^{\Box k}\boxover{R_{\mstar}} R_{\mstar}(Y),
\]
and the maps are the standard resolution computing \(\Tor\).
\end{proof}

\begin{remark}
If a basis for \(A\) and either \(X\) or \(Y\) can be chosen to be in \(\RO(G)\), then Lewis--Mandell give an \(\RO(G)\)-graded version of the K\"unneth spectral sequence which gives the exact same result. This is because our bar complex becomes the relative smash product upon taking \(\Sigma^{\infty}_{+}\). The resulting spectral sequence is the same \cite{LewisMandell}, since it is built the same way. 
\end{remark}

Applying cohomology instead to the bar construction when \(X=Y=\ast\) gives the Rothenberg--Steenrod spectral sequence \cite{RothSteen}. Our assumptions allow this to be determined as well.

\begin{theorem}
If \(A\) is \(R\)-free and \(A\) is finite type, then we have a spectral sequence
\[
E^{\ast,\mstar}_{2}=\Ext^{s}_{R^{\mstar}(A)}
\big(R^{\mstar},R^{\mstar}\big)\Rightarrow R^{\mstar-s}(BA).
\]
\end{theorem}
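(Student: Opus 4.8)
The plan is to run the argument proving the bar spectral sequence ``in cohomology'', applying $F(-,R)$ to the bar construction one simplicial level at a time instead of $R\wedge(-)$. Recall that $BA=B(\ast,A,\ast)$ is the geometric realization of the simplicial $G$-space $[k]\mapsto B_{k}(\ast,A,\ast)=A^{\times k}$, so $\Sigma^{\infty}_{+}BA$ is the geometric realization of the simplicial $G$-spectrum $[k]\mapsto\Sigma^{\infty}_{+}A^{\times k}$. Since $F(-,R)$ carries geometric realizations of simplicial spectra to totalizations of the associated cosimplicial $R$-modules, we get
\[
F(\Sigma^{\infty}_{+}BA,R)\simeq \operatorname{Tot}\big([k]\mapsto F(\Sigma^{\infty}_{+}A^{\times k},R)\big),
\]
and the Bousfield--Kan spectral sequence of this totalization takes the form
\[
E_{1}^{s,\mstar}=R^{\mstar}(A^{\times s})\ \Rightarrow\ R^{\mstar-s}(BA).
\]

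First I would identify the $E_{1}$-page. By Proposition~\ref{prop:FreeForProducts} (equivalently Theorem~\ref{thm:SymMonoidalFreeForSpaces}) each $A^{\times s}$ has free $R$-homology, and it is plainly finite type because $A$ is. The Universal Coefficients theorem for finite-type $R$-free spectra therefore applies and gives
\[
R^{\mstar}(A^{\times s})\cong \Hom_{R_{\mstar}}\big(R_{\mstar}(A^{\times s}),R_{\mstar}\big)\cong \Hom_{R_{\mstar}}\big(R_{\mstar}(A)^{\Box s},R_{\mstar}\big)\cong R^{\mstar}(A)^{\Box s},
\]
the middle isomorphism being the weak K\"unneth isomorphism of Theorem~\ref{thm:FreeForModules} (and its corollary) and the last one again using finite type to pull the $R_{\mstar}$-dual past the finite box product. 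Thus the cosimplicial $R$-module $[k]\mapsto F(\Sigma^{\infty}_{+}A^{\times k},R)$ is, on homotopy, the $R_{\mstar}$-linear dual of the simplicial $R_{\mstar}$-module $[k]\mapsto R_{\mstar}(A)^{\Box k}$ appearing in the bar spectral sequence of the previous theorem.

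Next I would identify the differentials, hence the $E_{2}$-page. The cofaces and codegeneracies of our cosimplicial object are $F(-,R)$ applied to the bar structure maps, which are built from the product $A\times A\to A$ and the unit $\ast\to A$; under the duality above these become the comultiplication $R^{\mstar}(A)\to R^{\mstar}(A)\Box R^{\mstar}(A)$ dual to the Pontryagin product, and the counit $R^{\mstar}(A)\to R^{\mstar}$. Consequently the normalized cochain complex of $[k]\mapsto R^{\mstar}(A)^{\Box k}$ is exactly the reduced cobar complex of the $R^{\mstar}$-coalgebra $R^{\mstar}(A)$ with $R^{\mstar}$-coefficients, whose cohomology is $\operatorname{Cotor}_{R^{\mstar}(A)}(R^{\mstar},R^{\mstar})$; being the $R_{\mstar}$-dual of the standard complex computing $\Tor^{R_{\mstar}(A)}(R_{\mstar},R_{\mstar})$, this is precisely the $\Ext$ named in the statement, so $E_{2}^{s,\mstar}=\Ext^{s}_{R^{\mstar}(A)}(R^{\mstar},R^{\mstar})$.

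The only point requiring real care is convergence: the Bousfield--Kan spectral sequence of a $\operatorname{Tot}$ converges only conditionally in general, the obstruction living in the $\lim^{1}$ of the tower of partial totalizations. Here the finite-type hypothesis rescues us exactly as it did in the proof of the Universal Coefficients theorem --- in each fixed $\RO(G)$-degree only finitely many summands $R^{\mstar}(A^{\times s})$ are nonzero, so the tower is eventually constant degreewise, the relevant $\lim^{1}$-terms vanish, and the spectral sequence converges strongly to $R^{\mstar-s}(BA)$. I expect this convergence bookkeeping --- together with keeping honest track of where finite type is used in the duality identification of $E_{1}$ --- to be the main obstacle; everything else is the formal dual of the argument already given for the bar spectral sequence.
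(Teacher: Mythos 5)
Your proposal is correct and is exactly the fleshed-out version of the paper's approach: the paper offers no proof beyond the remark that one applies cohomology to the bar construction, and your argument is precisely that dualization, with the finite-type hypothesis invoked in the right three places (Universal Coefficients on each $A^{\times s}$, pulling the $R_{\mstar}$-dual through the finite box product, and killing the $\lim^{1}$ obstruction to strong convergence of the $\operatorname{Tot}$-tower). One small caution on the last step: your cobar complex computes $\operatorname{Cotor}_{R^{\mstar}(A)}(R^{\mstar},R^{\mstar})\cong\Ext_{R_{\mstar}(A)}(R_{\mstar},R_{\mstar})$, i.e.\ Ext over the Pontryagin ring (equivalently comodule Ext over the coalgebra $R^{\mstar}(A)$), not module Ext over the cup-product algebra $R^{\mstar}(A)$ --- these differ in general, so the sentence ``this is precisely the $\Ext$ named in the statement'' is really fixing the interpretation of the theorem's notation rather than deducing it.
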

%\begin{proof}
%Our assumptions guarantee that for each \(k\), the \(R\)-homology of \(B_{k}(X,A,Y)\) is given by
%\[
%R_{\mstar}\big(B_{k}(X,A,Y)\big)\cong R_{\mstar}(X)\boxover{R_{\mstar}} R_{\mstar}(A)^{\Box k}\boxover{R_{\mstar}} R_{\mstar}(Y),
%\]
%and the maps are the standard resolution computing \(\Tor\).
%\end{proof}

\subsubsection*{Example: \(B\BUR\)}
Since \(\BUR\) is \(H\mZ\)-free, we can run the bar spectral sequence to compute the homology of \(B\BUR\).

\begin{proposition}[{\cite{LewisMandell}}]
There is an Adams-style spectral sequence with 
\[
\m{E}^{2}_{s,\star}=\m{\Tor}^{-s}_{{H}_{\mstar}\BUR}({\mZ}_{\mstar},{\mZ}_{\mstar})\cong E_{{\mZ}_{\mstar}}\big(\bar{y}_{1},\dots\big)\Rightarrow H_{\mstar-s}\big(B\BUR;\mZ\big),
\]
where \(\bar{y}_{i}\) is the element in \(\Tor^{1}\) represented by \(\bar{a}_{i}\) and has bidegree \((-1,i\rho_{2})\).
\end{proposition}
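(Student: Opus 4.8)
The plan is to feed \(\BUR\) into the bar spectral sequence of the preceding theorem, with \(A=\Sigma^{\infty}_{+}\BUR\) and \(X=Y=S^{0}\). First I would observe that \(\BUR\) is an \(E_{\infty}\)-space under its additive \(H\)-structure — it is the zeroth space of a connective \(C_{2}\)-spectrum — so it may be replaced by a strictly associative monoid without changing its \(H\mZ\)-homology, and then \(B(S^{0},A,S^{0})\simeq\Sigma^{\infty}_{+}B\BUR\). By Theorem~\ref{thm:AssociativeRingVersion}, \(\BUR\) is \(H\mZ\)-free, so the hypotheses of the bar spectral sequence are satisfied and it produces an Adams-graded spectral sequence
\[
\m{E}^{2}_{s,\star}=\m{\Tor}^{-s}_{{H}_{\mstar}\BUR}\big({H}_{\mstar}^{\mZ},{H}_{\mstar}^{\mZ}\big)\Longrightarrow \m{\pi}_{\star-s}\big(H\mZ\wedge B\BUR\big),
\]
where \({H}_{\mstar}^{\mZ}\) is regarded as an \({H}_{\mstar}\BUR\)-module through the augmentation killing each \(\bar a_{i}\).

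The second step is to identify this \(\Tor\). Theorem~\ref{thm:AssociativeRingVersion} gives \({H}_{\mstar}\BUR\cong{H}_{\mstar}^{\mZ}[\bar a_{1},\bar a_{2},\dots]\), a polynomial algebra over the Green functor \({H}_{\mstar}^{\mZ}\) on classes \(\bar a_{i}\) of internal degree \(i\rho_{2}\). I would resolve \({H}_{\mstar}^{\mZ}\) by the associated Koszul complex — the exterior \({H}_{\mstar}\BUR\)-algebra on generators \(e_{i}\) of homological degree \(1\) and internal degree \(i\rho_{2}\), with \(d(e_{i})=\bar a_{i}\) — which is a free resolution because the polynomial variables \(\bar a_{i}\) form a regular sequence. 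Applying \((\mhyphen)\boxover{{H}_{\mstar}\BUR}{H}_{\mstar}^{\mZ}\) kills every differential, yielding
\[
\m{\Tor}^{\ast}_{{H}_{\mstar}\BUR}\big({H}_{\mstar}^{\mZ},{H}_{\mstar}^{\mZ}\big)\cong E_{{H}_{\mstar}^{\mZ}}\big(\bar y_{1},\bar y_{2},\dots\big),
\]
where \(\bar y_{i}\) is the class of \(e_{i}\), lying in \(\m{\Tor}^{1}\) and represented by \(\bar a_{i}\), hence of bidegree \((-1,i\rho_{2})\) in the spectral sequence indexing. This is exactly the asserted \(\m{E}^{2}\)-page.

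The delicate point — and the one genuinely non-formal step — is that all of this homological algebra happens in \(RO(C_{2})\)-graded modules over the Green functor \({H}_{\mstar}^{\mZ}\), where \(\boxover{{H}_{\mstar}^{\mZ}}\) is not exact in general, so one must check that the Koszul complex really is a resolution of \({H}_{\mstar}^{\mZ}\) there and that its differentials really do vanish after tensoring down. Homological purity is what makes this go through: since \(\BUR\) is homologically pure and isotropic (Theorem~\ref{thm:AssociativeRingVersion}), \({H}_{\mstar}\BUR\) is a free (extended) \({H}_{\mstar}^{\mZ}\)-module on the monomial basis and so are the terms of the Koszul complex, the box products involved are flat by the weak Künneth theorem (Theorem~\ref{thm:FreeForModules}), and by Proposition~\ref{prop:HomologyofHomologicallyPure} every restriction map is injective — so the exactness of the Koszul complex and the collapse of its differentials can be verified on underlying homology, where the computation reduces to the classical fact \(\Tor^{\ast}_{H_{\ast}(BU;\mathbb Z)}(\mathbb Z,\mathbb Z)\cong\Lambda_{\mathbb Z}(y_{1},y_{2},\dots)\). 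Once that reduction is in place, the statement is a formal consequence of the bar spectral sequence and of Theorem~\ref{thm:AssociativeRingVersion}.
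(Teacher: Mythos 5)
Your argument is correct and is essentially the paper's: the proposition is a direct application of the bar spectral sequence theorem (equivalently the Lewis--Mandell K\"unneth spectral sequence) with \(A=\BUR\), \(X=Y=\ast\), using Theorem~\ref{thm:AssociativeRingVersion} to see that \(H_{\mstar}\BUR\cong H_{\mstar}^{\mZ}[\bar a_{1},\dots]\) is free over \(H_{\mstar}^{\mZ}\), so that the bar complex computes \(\Tor\) and the Koszul resolution identifies it with the exterior algebra on classes \(\bar y_{i}\) of bidegree \((-1,i\rho_{2})\). Your extra care about exactness of the Koszul complex in \(RO(C_{2})\)-graded \(H_{\mstar}^{\mZ}\)-modules is well placed, though it follows already from freeness of the monomial basis without needing the injectivity of restrictions.
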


Since all of the algebra generators are in filtration \((-1)\), this spectral sequence collapses at \(E_{2}\). This is a free \({\mZ}_{\mstar}\)-module, hence there are no additive extensions. There are, however, multiplicative extensions.

\begin{theorem}
As an \(\RO(C_{2})\)-graded Green functor, 
\[
{H}_{\mstar}(B\BUR;\mZ)\cong {\mZ}_{\mstar}[\bar{y}_{1},\bar{y}_{2},\dots]/ (\bar{y}_{i}^{2}-a_{\sigma}\bar{y}_{2i+1}),
\]
where \(\bar{y}_{i}\) is a fixed element of degree \(i\rho_{2}+1\).
\end{theorem}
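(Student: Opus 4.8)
The plan is to read the answer off the bar spectral sequence quoted above. It collapses at $E_{2}$, so $H_{\mstar}(B\BUR;\mZ)$ is, as an $H_{\mstar}^{\mZ}$-module, free on lifts of the exterior monomials in the classes $\bar y_i$ (where $\bar y_i$ is the $\Tor^{1}$-class of $\bar a_i$, and $H_{\mstar}(\BUR;\mZ)=H_{\mstar}^{\mZ}[\bar a_1,\bar a_2,\dots]$ from Theorem~\ref{thm:AssociativeRingVersion}). Moreover the spectral sequence is multiplicative and $E_{2}=E_{\infty}$ is the exterior algebra $\Lambda_{H_{\mstar}^{\mZ}}(\bar y_1,\bar y_2,\dots)$ as a ring, so each product $\bar y_{i_1}\cdots\bar y_{i_k}$ of generators has the corresponding exterior monomial as leading associated-graded term; after this unitriangular change of basis we may take the square-free products $\bar y_{S}=\prod_{i\in S}\bar y_i$, $S\subset\{1,2,\dots\}$ finite, as a free $H_{\mstar}^{\mZ}$-basis, and $\bar y_1,\bar y_2,\dots$ generate the ring over $H_{\mstar}^{\mZ}$. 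Everything then hinges on computing $\bar y_i^{2}$: once $\bar y_i^{2}=a_\sigma\bar y_{2i+1}$ is known, the relations $\bar y_i^{2}=a_\sigma\bar y_{2i+1}$ together with graded commutativity rewrite any monomial as an $H_{\mstar}^{\mZ}$-combination of the $\bar y_S$ (each rewrite strictly decreases the number of factors, hence terminates), and freeness of the module on $\{\bar y_S\}$ shows these are the only relations — giving the stated presentation, with $|\bar y_i|=i\rho_2+1$.

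To locate $\bar y_i^{2}$, note it lies in degree $2i\rho_2+2=(2i+2)+2i\sigma$. Since $H_{\mstar}^{\mZ}$ is nonzero in $\RO(C_2)$-degrees of the form $(q+1)+q\sigma$ only for $q=-1$ (where it is $\m\F_2\langle a_\sigma\rangle$) and in nonzero multiples of $\rho_2$ only in degree $0$, running through the basis $\{\bar y_S\}$ shows the only basis elements in degree $2i\rho_2+2$ are $a_\sigma\bar y_{2i+1}$ (which is $2$-torsion) and the products $\bar y_j\bar y_k$ with $j+k=2i$, $j<k$ (each $\mathbb Z$-free). But $\bar y_i$ has odd underlying dimension $2i+1$, so $\RO(C_2)$-graded commutativity forces $2\bar y_i^{2}=0$; this kills the $\mathbb Z$-free summands, leaving $\bar y_i^{2}=c_i\,a_\sigma\bar y_{2i+1}$ for a unique $c_i\in\m\F_2$.

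The crux is to show $c_i=1$, and for this I would pass to geometric fixed points. Because $(\BUR)^{C_2}=BO$ and geometric realization commutes with fixed points, $\Phi^{C_2}\Sigma^{\infty}_{+}B\BUR\simeq\Sigma^{\infty}_{+}B(BO)$; composing $\Phi^{C_2}$ with the $0$-truncation $\Phi^{C_2}H\mZ\to H\m\F_2$ yields a ring homomorphism $\m\pi_{\star}(H\mZ\wedge B\BUR)\to H_{\ast}(B(BO);\m\F_2)$ carrying $a_\sigma$ to $1$ and $\bar y_m$ to $z_m:=\sigma e_m\neq0$, where $H_{\ast}(BO;\m\F_2)=\m\F_2[e_1,e_2,\dots]$ and $\bar a_m$ restricts on geometric fixed points to $e_m$ modulo decomposables (the identification already used above to deduce the Kochman formula from $\BUR$), together with the vanishing of the homology suspension on decomposables. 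Now $B(BO)$ is an infinite loop space, so using the Kudo transgression, the Kochman formula $Q^{m+1}(e_m)=\binom{m}{0}e_{2m+1}=e_{2m+1}$ modulo decomposables, and again $\sigma(\text{decomposables})=0$, one obtains
\[
z_m^{2}=Q^{m+1}(z_m)=Q^{m+1}(\sigma e_m)=\sigma Q^{m+1}(e_m)=\sigma e_{2m+1}=z_{2m+1}
\]
in $H_{\ast}(B(BO);\m\F_2)$. Applying the ring homomorphism to $\bar y_i^{2}=c_i\,a_\sigma\bar y_{2i+1}$ gives $z_{2i+1}=c_i z_{2i+1}$, and since $z_{2i+1}\neq0$ we conclude $c_i=1$.

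The main obstacle is this last step: one must check the compatibilities carefully — that $\Phi^{C_2}$ of the bar spectral sequence for $B\BUR$ maps to the bar spectral sequence for $B(BO)$, sending $\bar y_m$ to $z_m$ and $a_\sigma$ to a unit — and assemble the classical Dyer--Lashof bookkeeping on $H_{\ast}(B(BO);\m\F_2)$ (note that $B\BUR$ itself is not homologically pure, so Theorem~\ref{thm:DLAction} does not apply directly; one instead uses the already-established $BO$ computation). By contrast, the degree count of the second paragraph and the final reduction to a presentation are routine.
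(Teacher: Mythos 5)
Your proof is essentially correct but takes a genuinely different route at the crux. The paper gets the multiplicative extension in one line from Wilson's equivariant squaring formula: for a class in degree \(n\rho_{2}+1\) one has \((\mhyphen)^{2}=a_{\sigma}Q^{(n+1)\rho_{2}}\), and then \(Q^{(n+1)\rho_{2}}\bar y_{n}=\sigma_{*}Q^{(n+1)\rho_{2}}\bar a_{n}=\sigma_{*}\bar a_{2n+1}=\bar y_{2n+1}\), using the equivariant {\DL} action on \(H_{\mstar}(\BUR;\m\F_{2})\) already computed from homological purity. You instead avoid the equivariant {\DL} algebra on \(B\BUR\) entirely: you pin \(\bar y_{i}^{2}\) down to \(c_{i}a_{\sigma}\bar y_{2i+1}\), \(c_{i}\in\F_{2}\), by a degree count plus graded commutativity, and then evaluate \(c_{i}\) on geometric fixed points using only the classical Kudo transgression and Kochman's formula in \(H_{*}(B(BO);\F_{2})\). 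This is a legitimate and in some ways more self-contained argument (it also handles the passage from mod-\(2\) to integral coefficients more explicitly than the paper does, since your degree count identifies the target group as \(\F_{2}\{a_{\sigma}\bar y_{2i+1}\}\)); its cost is the list of compatibility checks for \(\Phi^{C_{2}}\) (strong monoidality, commutation with the bar construction, \(a_{\sigma}\mapsto 1\), \(\bar a_{m}\mapsto e_{m}\) mod decomposables), all of which are standard and consistent with how the paper itself deduces Kochman's theorem from \(\BUR\). The one place you should tighten is the degree count: you only inspect the summands \(\bar y_{S}\) with \(|S|\in\{1,2\}\), but a contribution to degree \(2i\rho_{2}+2\) from \(\bar y_{S}\) requires \(\pi_{(2-|S|)+(2i-\Sigma)\rho_{2}}H\mZ\neq 0\) with \(\Sigma=\sum_{j\in S}j\), and one must also rule out \(|S|=0\), \(|S|\geq 3\), and negative-cone torsion for \(|S|=1\) with \(m>2i+1\); this is a finite check against the known \(\RO(C_{2})\)-graded homotopy of \(H\mZ\), but since your \(\Phi^{C_{2}}\)-detection only sees the \(a_{\sigma}\bar y_{2i+1}\) component, any undetected torsion in other summands would survive into the relation, so the check is genuinely needed rather than cosmetic.
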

\begin{proof}
The {\DL} operations commute with the homology suspension, and since this factors through the indecomposables, our earlier analysis gives on-the-nose identifications of the {\DL} actions. 

Wilson has shown that for a class in degree \((n\rho_{2}+1)\), the square is stable and can be written as
\[
(\mhyphen)^{2}=a_{\sigma}Q^{(n+1)\rho_{2}}+u_{\sigma}Q^{(n+1)\rho_{2}-1}.
\]
In particular, the squares are given by
\[
\bar{y}_{n}^{2}=a_{\sigma}Q^{(n+1)\rho_{2}}\bar{y}_{n}=a_{\sigma}\big[Q^{(n+1)\rho_{2}}\bar{a}_{n}\big]=a_{\sigma}[\bar{a}_{2n+1}]=a_{\sigma}\bar{y}_{2n+1}.\qedhere
\]
\end{proof}

\begin{remark}
The geometric fixed points of this are again polynomial, and we recover the result of Kochman on the homology of \(BBO\) \cite{KoDL}.
\end{remark}

\begin{remark}
The \(C_{2}\)-space \(\BUR\) is \(C_{2}\)-\(E_{\infty}\), so it makes sense to ask about norm maps here. The situation is more complicated. In fact, the \(\Tor\) term itself has a somewhat confusing relationship with the norms, since there is no reason for the homology suspension to set them equal to zero. Put another way, the usual argument shows that homology suspension factors through the ordinary module of K\"ahler differentials, but it will not necessarily factor through the module of genuine K\"ahler differentials of \cite{HillAQ}.
\end{remark}

%\begin{remark}
%There is a natural map
%\[
%\Sigma A\to BA
%\]
%for any associative monoid in \(C_{2}\)-spaces, so in particular for any commutative or \(C_{2}\)-commutative one. This gives the homology suspension map
%\[
%\Sigma H_{\mstar}(A)\to H_{\mstar}(BA),
%\]
%which takes an element \(x\) to an element \(\sigma x\) in \(\Tor^{1}\). The classical argument that this annihilates products goes through without change, showing that it factors through the indecomposables
%\[
%\Sigma QH_{\mstar}(A)\to H_{\mstar}(BA).
%\]
%However, these are just the Green functor indecomposables, not the Tambara functor indecomposables. We have no reason to believe that norms are killed, and in fact, we have a relation:
%\[
%
%\]
%\end{remark}

Since the homology of \(B\BUR\) is free, we also get the homology of the coinduced \(B\BUR\).
\begin{theorem}
For any finite group \(G\) and inclusion \(C_2\subset G\), we have an isomorphism of \(\RO\)-graded Tambara functors
\[
H_{\mstar}\big(\Map^{C_{2}}(G,B\BUR);\mZ\big)\cong 
N_{C_{2}}^{G}\big(H_{\mstar}(B\BUR;\m\Z)\big).
\]
\end{theorem}

\subsection{Twisted bar spectral sequence}
In \(C_{2}\)-equivariant homotopy, we have an additional version of the \(E_{1}\)-operad: the \(E_{\sigma}\)-operad. Algebras for this have no multiplication on their fixed points, but they do have a transfer map and an underlying multiplication. A summary  can be found in \cite{SignedLoops}.

If \(A\) is an \(E_{\sigma}\)-algebra, then we can form a kind of balanced tensor product
\[
\begin{tikzcd}
{A}
	\ar[r]
	\ar[d]
	&
{X}
	\ar[d]
	\\
{X}
	\ar[r]
	&
{X\overset{\curvearrowleftright}{\otimesover{A}} X},
\end{tikzcd}
\]
where \(C_{2}\) acts on the whole diagram by swapping the two copies of \(X\). This amounts to the data of a space \(X\) acted on by the associative monoid \(i_{e}^{\ast}A\). The \(E_{\sigma}\)-structure on \(A\) means that the group action gives an isomorphism \(i_{e}^{\ast}A\cong i_{e}^{\ast}A^{op}\), and hence the action on \(X\) also canonically gives a right action. The twisted balanced product swaps the two factors of \(X\) and also then necessarily changes these left and right actions.

\begin{definition}
If \(A\) is an \(E_{\sigma}\)-algebra and \(X\) is an \(i_{e}^{\ast}A\)-module, then let
\[
B^{\sigma}\!(A;X)=B\big(A,\Map(C_{2},A),\Map(C_{2},X)\big),
\]
where the action of \(\Map(C_{2},A)\) on \(A\) is via the \(E_{\sigma}\)-structure.
\end{definition}

Perhaps the most interest case is when \(X\) is a point. In this case, work of Hahn--Shi and of Liu show that in this case \(B^\sigma\) is the appropriate ``signed de-looping'', providing a classifying space for \(E_\sigma\)-algebras \cite{HahnShi, Liu}.

\begin{theorem}
If \(A\) has \(R\)-free homology, then we have a spectral sequence which Adams indexed has the form
\[
E^{s,\mstar}_{2}=\Tor_{-s}^{N_{e}^{C_{2}}\big(i_{e}^{*}R_{\ast}(i_{e}^{\ast}A)\big)}
\Big(R_{\mstar}\big(\Map(C_{2},X)\big),R_{\mstar}(A)\Big)\Rightarrow R_{\mstar-s}\big(B^{\sigma}\!(A;X)\big).
\]
If \(X\) also has \(R\)-free homology, then the action of \( N_{e}^{C_{2}}\big(i_{e}^{\ast}R_{\ast}(i_{e}^{\ast}A)\big)\) on 
\[
R_{\mstar}\big(\Map(C_{2},X)\big)\cong N_{e}^{C_{2}}\big(i_{e}^{\ast}R_{\ast}(X)\big)
\]
is the one induced by functoriality.
\end{theorem}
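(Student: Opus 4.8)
The plan is to imitate the proof of the untwisted bar spectral sequence, applied to the defining simplicial \(C_2\)-space
\[
k\mapsto B_k = A\times\Map(C_2,A)^{\times k}\times\Map(C_2,X)
\]
of \(B^\sigma(A;X)\). Smashing with \(R\) and passing to geometric realization, the skeletal filtration yields a homological spectral sequence whose \(E_1\)-page is the levelwise \(R\)-homology, and the Adams regrading \(s=-(\text{simplicial degree})\) puts it in the stated form. The real content is to (i) compute each \(R_\mstar(B_k)\) by a K\"unneth formula and (ii) identify the resulting simplicial \(R_\mstar\)-module with a two-sided bar resolution computing \(\Tor\).

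For (i): the space \(A\) is \(R\)-free by hypothesis, so \(i_e^\ast A\) is \(i_e^\ast R\)-free by closure under restriction, and since \(\Sigma^\infty_+\Map(C_2,A)\simeq N_e^{C_2}\Sigma^\infty_+ i_e^\ast A\), closure under the norm (along the \(E_\infty\)-map \(N_e^{C_2}i_e^\ast R\to R\)) together with Theorem~\ref{thm:SymMonoidalFreeForSpaces} shows that \(\Map(C_2,A)\) is \(R\)-free with
\[
R_\mstar\big(\Map(C_2,A)\big)\cong N_e^{C_2}\big(i_e^\ast R_\ast(i_e^\ast A)\big).
\]
Since \(A\) and \(\Map(C_2,A)\) are \(R\)-free, Theorem~\ref{thm:FreeForModules} supplies a K\"unneth isomorphism
\[
R_\mstar(B_k)\cong R_\mstar(A)\boxover{R_\mstar} R_\mstar\big(\Map(C_2,A)\big)^{\Box k}\boxover{R_\mstar} R_\mstar\big(\Map(C_2,X)\big)
\]
valid for any \(X\); when \(X\) is in addition \(R\)-free the same norm argument identifies \(R_\mstar\big(\Map(C_2,X)\big)\cong N_e^{C_2}\big(i_e^\ast R_\ast(X)\big)\).

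For (ii): the monoid \(\Map(C_2,A)\) is the coinduction of the associative monoid \(i_e^\ast A\) with its pointwise product, so since \(N_e^{C_2}\) is strong symmetric monoidal and compatible with the K\"unneth isomorphism, the algebra structure it induces on \(R_\mstar(\Map(C_2,A))\) is exactly the norm algebra \(N_e^{C_2}(i_e^\ast R_\ast(i_e^\ast A))\). The face maps of \(B_\bullet\) are built from the \(E_\sigma\)-action of \(\Map(C_2,A)\) on \(A\) at one end, the action of \(\Map(C_2,A)\) on \(\Map(C_2,X)\) at the other, and the monoid product of \(\Map(C_2,A)\) in the middle; applying \(R_\mstar\) and the K\"unneth isomorphism turns these into the module structure maps and the multiplication, so the normalized chains of \(R_\mstar(B_\bullet)\) form the normalized two-sided bar complex, whose homology is \(\Tor\). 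This gives the asserted \(E_2\)-page. For the last statement, the action \(\Map(C_2,A)\times\Map(C_2,X)\to\Map(C_2,X)\) is \(\Map(C_2,-)\) applied to the \(i_e^\ast A\)-action on \(X\), so on \(R\)-homology it is \(N_e^{C_2}\) of the \(i_e^\ast R_\ast(i_e^\ast A)\)-module structure on \(i_e^\ast R_\ast(X)\) --- i.e.\ the functorially induced action.

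I expect the main obstacle to be the bookkeeping in (ii): faithfully translating the \(E_\sigma\)-operad action on \(A\) into the statement that \(R_\mstar(A)\) is a module over the norm algebra with the correct handedness --- recalling that the \(E_\sigma\)-structure is precisely what identifies \(i_e^\ast A\) with \(i_e^\ast A^{op}\) and so makes the swap in the balanced product well defined --- and checking that the norm interacts correctly with the iterated \(\boxover{R_\mstar}\)-products appearing in \(B_k\). A secondary point is convergence: as for the untwisted bar spectral sequence, one invokes the standard convergence of the realization (skeletal) spectral sequence under the connectivity or finite-type hypotheses that hold in the cases of interest. As in the untwisted case, the Lewis--Mandell \(RO(G)\)-graded K\"unneth spectral sequence gives an alternative construction with the same \(E_2\)-page.
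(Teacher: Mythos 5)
Your proposal is correct and follows essentially the same route as the paper: the paper gives no explicit proof for this theorem, intending the reader to run the proof of the untwisted bar spectral sequence on the defining simplicial object \(B\big(A,\Map(C_{2},A),\Map(C_{2},X)\big)\), using closure of \(R\)-free spectra under the norm to identify \(R_{\mstar}\big(\Map(C_{2},A)\big)\) with the norm algebra and the K\"unneth isomorphism to identify the levelwise homology with the two-sided bar resolution --- exactly your steps (i) and (ii). Your write-up is in fact more detailed than the paper's (omitted) argument, particularly in tracking the \(E_{\sigma}\)-handedness.
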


\subsection{Eilenberg--Moore}

Following Rector, we build a geometric model of the Eilenberg--Moore spectral sequence \cite{Rector, SmithEM}. Just as non-equivariantly, any \(G\)-space is a coalgebra with comultiplication given by the diagonal map, and \(G\)-space \(X\) together with a map to a \(G\)-space \(B\) can be viewed as a \(B\)-comodule (and in fact, we have much more structure equivariantly coming from the twisted diagonals). This allows us to form the cosimplicial cobar complex as a model for the homotopy pullback.

If \(X\to B\) and \(B\leftarrow Y\) are maps of \(G\)-spaces, then a model for the homotopy pullback is given by
\[
X\timesover{B}^{h}Y\simeq coB(X,B,Y),
\]
where \(coB(X,B,Y)\) is the totalization of the cosimplicial complex
\[
k\mapsto X\times B^{\times k}\times Y,
\]
and where the structure maps are the diagonal of \(B\) or the respective coaction maps. If \(B\) and either \(X\) or \(Y\) are \(R\)-free and finite type, then we have a spectral sequence computing cohomology. In general, convergence of this spectral sequence is very delicate, just as classically. For this reason, we state the result only for Bredon homology with coefficients in a Green functor.

\begin{theorem}\label{thm:EMSS}
If \(B\) and either \(X\) or \(Y\) has \(\mR\)-free homology, then we have a spectral sequence
\[
E_{2}=\Tor_{-s}^{H^{\mstar}(B;\mR)}\big(H^{\mstar}(X;\mR),H^{\mstar}(Y;\mR)\big)\Rightarrow H^{\mstar+s}(X\timesover{B}^{h}Y;\mR).
\]
\end{theorem}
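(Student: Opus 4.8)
The plan is to imitate Rector's geometric model of the Eilenberg--Moore spectral sequence \cite{Rector, SmithEM}, computing Bredon cohomology one cosimplicial level at a time and then using the weak K\"unneth and universal coefficients results for \(\mR\)-free spaces to recognize the resulting cochain complex as a bar complex. Throughout write \(H^{\mstar}=H^{\mstar}(\ast;\mR)\) for the \(\RO(G)\)-graded Bredon cohomology of a point. First I would recall that the homotopy pullback is modelled by the totalization of the cosimplicial \(G\)-space \(coB^{\bullet}(X,B,Y)\) with \(coB^{k}(X,B,Y)=X\times B^{\times k}\times Y\), where the two outer coface maps are built from \(X\to B\) and \(Y\to B\), the inner ones from the diagonal \(\Delta_{B}\), and the codegeneracies collapse copies of \(B\). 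Applying \(F(\Sigma^{\infty}_{+}(\mhyphen),H\mR)\) levelwise produces a simplicial \(H\mR\)-module whose realization receives a comparison map from \(F(\Sigma^{\infty}_{+}(X\timesover{B}^{h}Y),H\mR)\), and the associated Tot/Bousfield--Kan spectral sequence has \(E_{1}\)-page the normalized cochain complex of \(k\mapsto H^{\mstar}\big(coB^{k}(X,B,Y);\mR\big)\).

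Next I would compute each level. Say \(X\) is the \(\mR\)-free factor (the case of \(Y\) being symmetric), and assume, as in the discussion preceding the statement, that the \(\mR\)-free inputs are of finite type. By Theorem~\ref{thm:SymMonoidalFreeForSpaces} every product \(X\times B^{\times k}\) is again \(\mR\)-free and of finite type, so the weak K\"unneth theorem (Theorem~\ref{thm:FreeForModules}) and its cohomological counterpart let us strip off this factor even against the possibly non-free \(Y\):
\[
H^{\mstar}\big(coB^{k}(X,B,Y);\mR\big)\cong H^{\mstar}(X;\mR)\boxover{H^{\mstar}} H^{\mstar}(B;\mR)^{\Box k}\boxover{H^{\mstar}} H^{\mstar}(Y;\mR),
\]
with each factor \(H^{\mstar}(B;\mR)\) flat over \(H^{\mstar}\). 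Tracing the cofaces through these isomorphisms, the outer ones induce the \(H^{\mstar}(B;\mR)\)-module structures on \(H^{\mstar}(X;\mR)\) and \(H^{\mstar}(Y;\mR)\) coming from \(X\to B\) and \(Y\to B\), while the inner ones induce the cup product of \(H^{\mstar}(B;\mR)\). Hence the normalized cochain complex is precisely the normalized two-sided bar complex \(\bar B\big(H^{\mstar}(X;\mR),H^{\mstar}(B;\mR),H^{\mstar}(Y;\mR)\big)\). Since the \(\mR\)-free factors \(H^{\mstar}(X;\mR)\) and \(H^{\mstar}(B;\mR)\) are flat over \(H^{\mstar}\), this is the standard bar resolution of \(H^{\mstar}(X;\mR)\) by relatively free \(H^{\mstar}(B;\mR)\)-modules, box product over \(H^{\mstar}(B;\mR)\) with \(H^{\mstar}(Y;\mR)\), and so its cohomology is \(\Tor_{-s}^{H^{\mstar}(B;\mR)}\big(H^{\mstar}(X;\mR),H^{\mstar}(Y;\mR)\big)\) with \(-s\) the cosimplicial degree. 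This is the asserted \(E_{2}\)-page.

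The hard part will be convergence, which is genuinely delicate here, just as classically: the comparison map from \(F(\Sigma^{\infty}_{+}(X\timesover{B}^{h}Y),H\mR)\) to the realization need not be an equivalence without further hypotheses, and the Tot-tower must have vanishing \(\lim^{1}\). This is exactly why the statement is restricted to Bredon cohomology with Green-functor coefficients (so that \(H\mR\) is a connective Eilenberg--Mac~Lane spectrum) and to finite-type \(\mR\)-free inputs: then each \(H^{\mstar}(coB^{k}(X,B,Y);\mR)\) is degreewise finitely generated over \(H^{\mstar}\) and bounded below in each slice, and the classical convergence arguments --- in the form already used for the bar and K\"unneth spectral sequences above, cf.\ \cite{LewisMandell} --- apply; one verifies this on the underlying nonequivariant spectral sequence together with the fixed-point towers, where it reduces to the ordinary Eilenberg--Moore statement. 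Away from this bookkeeping, the argument is the cohomological dual of the bar spectral sequence theorem proved above.
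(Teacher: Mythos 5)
Your proposal is correct and follows essentially the same route the paper takes: the paper's (implicit) argument is exactly the cobar cosimplicial model of the homotopy pullback, levelwise identification of the cohomology via the freeness/K\"unneth results to recognize the two-sided bar complex, and the acknowledgment that convergence is delicate and is the reason for restricting to Bredon cohomology with Green functor coefficients (and, per the preceding discussion, finite type). Your write-up simply makes explicit the steps the paper leaves as a sketch, in parallel with its proof of the bar spectral sequence theorem.
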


\subsection{Twisted Eilenberg--Moore}

Dual to the twisted pushout, we have a twisted homotopy pullback. 

\begin{definition}
If \(f\colon X\to i_{e}^{\ast}B\), then let
\(
X\overset{\curvearrowleftright}{\timesover{B}} X
\) be the defined by the homotopy pullback
\[
\begin{tikzcd}
{X\overset{\curvearrowleftright}{\timesover{B}} X}
	\ar[r]
	\ar[d]
	&
{\Map(C_{2},X)}
	\ar[d, "{\Map(C_{2},f)}"]
	\\
{B}
	\ar[r, "\Delta"']
	&
{\Map(C_{2},B).}
\end{tikzcd}
\]
\end{definition}

This is modeling a pullback diagram where now the group acts by swapping the two sides again. The homotopy pullback gives a version of the ordinary homotopy pullback where we replace the ordinary interval with the balanced interval \([-1,1]\) in the sign representation.

%Explicitly, a point in the homotopy pullback is given by a triple:
%\[
%\big((x_{0},x_{1}),b,(\gamma_{0},\gamma_{1})\big)\in \Map(C_{2},X)\times B\times \Map(C_{2},B)^{I}.
%\]
%This can be viewed as a pair of paths: one from \(b\) to \(f(x_{0})\) and one from \(gb\) to \(gf(x_{1})\). The group action swaps the points and the paths. A fixed point is then one for which \(b=gb\), \(x_{0}=x_{1}\), and \(g\gamma_{0}=\gamma_{1}\). Put another way, this is an equivariant map
%\[
%\gamma\colon [-1,1]\to B,
%\]
%where \(\gamma(-1)=f(x_{0})\), and where \([-1,1]\subset \mathbb R_{-}\) is the balanced interval in the sign representation. In other words, this is the signed version of the homotopy pullback.

\begin{remark}
If \(X\) is a point, then this gives us the space of signed loops into \(B\).
%
%If \(X=B^{C_{2}}\), then the homotopy pullback gives the space of paths connecting points in \(B^{C_{2}}\) but possibly passing through \(B\).
\end{remark}

This pullback gives a cobar complex and hence an Eilenberg--Moore spectral sequence via Theorem~\ref{thm:EMSS}.

\begin{theorem}
If \(B\) has \(\mR\)-free homology, and if \(\mR\) is a Tambara functor then we have a spectral sequence
\[
E_{2}=\Tor_{-s}^{N_{e}^{C_{2}}H^{\ast}(i_{e}^{\ast}B;\mR(C_{2}))}\Big(H^{\mstar}\big(\Map(C_{2},X);\mR\big),H^{\mstar}(B;\mR)\Big)\Rightarrow H^{\mstar+s}\big(X\overset{\curvearrowleftright}{\timesover{B}}X;\mR\big).
\]
Moreover, if \(X\) also has \(\mR\)-free homology, then the action on 
\[
H^{\mstar}\big(\Map(C_{2},X);\mR\big)\cong N_{e}^{C_{2}} H^{\ast}(X;\mR(C_{2}))
\]
is induced by the non-equivariant one.
\end{theorem}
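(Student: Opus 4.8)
The plan is to recognize the asserted spectral sequence as a special case of the (untwisted) Eilenberg--Moore spectral sequence of Theorem~\ref{thm:EMSS}, applied to the homotopy pullback that \emph{defines} \(X\overset{\curvearrowleftright}{\timesover{B}}X\), namely the pullback of
\[
\Map(C_{2},X)\longrightarrow\Map(C_{2},B)\xleftarrow{\Delta}B.
\]
Theorem~\ref{thm:EMSS} needs the base \(\Map(C_{2},B)\) and one of the two legs to have \(\mR\)-free homology; the leg \(B\) is \(\mR\)-free by hypothesis, so the actual content is the analysis of the base.

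First I would invoke the standard untwisting (shear) isomorphism, \((b_{0},b_{1})\mapsto(b_{0},gb_{1})\), which is the identification behind the paper's \(\Map(G/H,X)\cong\Map^{H}(G,X)\): for the \(C_{2}\)-space \(B\) the mapping space in the pullback is \(C_{2}\)-equivariantly \(\CoInd_{e}^{C_{2}}(i_{e}^{\ast}B)\), and likewise \(\Map(C_{2},X)\cong\CoInd_{e}^{C_{2}}(X)\); on suspension spectra this reads \(\Sigma^{\infty}_{+}\Map(C_{2},B)\simeq N_{e}^{C_{2}}\Sigma^{\infty}_{+}(i_{e}^{\ast}B)\). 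Since \(\mR\) is a Tambara functor, \(H\mR\) is a \(C_{2}\)-\(E_{\infty}\)-ring spectrum and carries a norm map \(N_{e}^{C_{2}}i_{e}^{\ast}H\mR\to H\mR\); restricting the \(\mR\)-free space \(B\) makes \(i_{e}^{\ast}B\) free over \(i_{e}^{\ast}H\mR\), and then Theorem~\ref{thm:SymMonoidalFreeForSpaces} (applied through the untwisting) shows \(\Map(C_{2},B)\) has free \(\mR\)-homology with basis the norm of one for \(i_{e}^{\ast}B\). Dually, the cohomology analogue of Theorem~\ref{thm:SymMonoidalFreeForSpaces} identifies, as \(\RO(C_{2})\)-graded rings,
\[
H^{\mstar}\big(\Map(C_{2},B);\mR\big)\cong N_{e}^{C_{2}}H^{\ast}(i_{e}^{\ast}B;\mR(C_{2})),
\]
noting also that \(\Map(C_{2},B)\) inherits finite type from \(i_{e}^{\ast}B\). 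With all hypotheses of Theorem~\ref{thm:EMSS} now in place, it produces the spectral sequence whose \(E_{2}\)-term is \(\Tor_{-s}\) over this ring and whose abutment is \(H^{\mstar+s}(X\overset{\curvearrowleftright}{\timesover{B}}X;\mR)\) by the definition of the twisted homotopy pullback, which is the claimed statement.

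For the final clause I would add the hypothesis that \(H_{\ast}(X;\mR(C_{2}))\) is free, so that by the same argument (now with the trivial group, using Example~\ref{exam:Underlying}) \(\Map(C_{2},X)\) is \(\mR\)-free with \(H^{\mstar}(\Map(C_{2},X);\mR)\cong N_{e}^{C_{2}}H^{\ast}(X;\mR(C_{2}))\). Under the untwisting equivalences the leg \(\Map(C_{2},f)\) becomes \(N_{e}^{C_{2}}\) of \(f\colon X\to i_{e}^{\ast}B\); since the norm is strong symmetric monoidal, it carries the \(f^{\ast}\)-module structure of \(H^{\ast}(X;\mR(C_{2}))\) over \(H^{\ast}(i_{e}^{\ast}B;\mR(C_{2}))\) to the module structure in the \(E_{2}\)-term, so that this module structure is \(N_{e}^{C_{2}}\) of the non-equivariant one.

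The main obstacle I anticipate is not a single deep step but the bookkeeping in the untwisting: checking that \(\Map(C_{2},B)\simeq\CoInd_{e}^{C_{2}}(i_{e}^{\ast}B)\) equivariantly, that \(\Delta\) and \(\Map(C_{2},f)\) go over to the expected maps, and --- crucially --- that the induced isomorphism on cohomology is one of \emph{rings} (and, in the last clause, of \emph{modules}), i.e.\ really is the norm rather than merely additive. A secondary point is the finite-type hypothesis implicit in Theorem~\ref{thm:EMSS}, which must be verified for \(\Map(C_{2},B)\) and \(\Map(C_{2},X)\); beyond these, everything is a formal consequence of Theorem~\ref{thm:EMSS} together with the freeness machinery of the earlier sections.
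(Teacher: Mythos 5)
Your proposal is correct and follows exactly the route the paper intends: the paper offers no proof beyond the remark that ``this pullback gives a cobar complex and hence an Eilenberg--Moore spectral sequence via Theorem~\ref{thm:EMSS},'' and your argument is precisely the elaboration of that sentence --- apply the untwisted Eilenberg--Moore spectral sequence to the defining pullback over \(\Map(C_{2},B)\), using the Tambara hypothesis to get the norm on \(H\mR\) and the earlier freeness/duality machinery to identify \(H^{\mstar}(\Map(C_{2},B);\mR)\) with \(N_{e}^{C_{2}}H^{\ast}(i_{e}^{\ast}B;\mR(C_{2}))\). Your attention to the finite-type hypothesis and to the multiplicativity of the untwisting identification is a point the paper glosses over entirely.
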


We believe that these spectral sequences will be useful in computing the cohomology of equivariant Eilenberg--Mac Lane spaces.

%\subsection{Norms and the bar spectral sequence}
%The K\"unneth spectral sequence we used is just an equivariant version of the bar spectral sequence. As classically, if \(A\) is an augmented, connected commutative ring spectrum, then we have a canonical map
%\[
%\Sigma A\to S^{1}\otimes A,
%\]
%since the tensor is continuous. The effect on homology is that just as classically, the map
%\[
%s\colon H_{\mstar}(\Sigma A;\mZ)\to H_{\mstar}(BA;\mZ)
%\]
%factors through the indecomposables (in \(H_{\mstar}\)-algebras). This is not true in general for the norms.
%
%\begin{theorem}
%If \(A\) is a \(G\)-\(E_{\infty}\) ring spectrum, then we have
%\[
%s\big(N_{H}^{G}x\big) = a_{\overline{\mathbb R [G/H]}} N_{H}^{G}s(x)
%\]
%for a class \(x\in H_{V}^{H}(A;\mZ)\).
%\end{theorem}

\bibliographystyle{plain}

\bibliography{Complete}

\end{document}